\numberwithin{equation}{section}
\newtheorem{theorem}{Theorem}[section]
\newtheorem{lemma}[theorem]{Lemma}
\newtheorem{corollary}[theorem]{Corollary}
\newtheorem{remark}[theorem]{Remark}
\newtheorem{proposition}[theorem]{Proposition}
\renewcommand{\d}{\mathrm{d}}
\newcommand{\dd}{\,\mathrm{d}}
\newcommand{\var}{\text{-}\mathrm{var}}
\newcommand{\hol}{\text{-}\mathrm{H}\textnormal{\"o}\mathrm{l}}
\newcommand{\om}{\text{-}\omega}
\newcommand{\R}{\mathbb{R}}
\newcommand{\pr}{\prime}
\newcommand{\Lip}{\mathrm{Lip}}
\newcommand{\X}{{\bf X}}
\renewcommand{\d}{\mathrm{d}}
\begin{document}

\title{Rough path metrics on a Besov--Nikolskii type scale}

\author[Friz]{Peter K. Friz}
\address{Peter K. Friz, Technische Universit\"at Berlin and Weierstrass Institute Berlin, Germany}

\author[Pr\"omel]{David J. Pr\"omel}\date{\today}
\address{David J. Pr\"omel, Eidgen\"ossische Technische Hochschule Z\"urich, Switzerland}

\begin{abstract}
  It is known, since the seminal work [T. Lyons, Differential equations driven by rough signals, Rev. Mat. Iberoamericana, 14 (1998)], that the solution map associated to a controlled differential equation is locally Lipschitz continuous in $q$-variation resp. $1/q$-H\"{o}lder type metrics on the space of rough paths, for any regularity $1/q \in (0,1]$.
  
  We extend this to a new class of Besov-Nikolskii-type metrics, with arbitrary regularity $1/q\in (0,1]$ and integrability $p\in [ q,\infty ]$, where the case $p\in \{ q,\infty \} $ corresponds to the known cases. Interestingly, the result is obtained as consequence of known $q$-variation rough path estimates.
\end{abstract}

\maketitle

\noindent\textbf{Key words:} controlled differential equation, Besov embedding, Besov space, It\^o-Lyons map, $p$-variation, Riesz type variation, rough path. \newline
\textbf{MSC 2010 Classification:} Primary: 34A34, 60H10; Secondary: 26A45, 30H25, 46N20.


\section{Introduction}

We are interested in controlled differential equations of the type 
\begin{equation}\label{eq:cODEintro}
  \d Y_{t}=V(Y_{t})\,\mathrm{d}X_{t},\quad t\in \lbrack 0,T],  
\end{equation}
where $X=(X_{t})$ is a suitable ($n$-dimensional) driving signal, $Y=(Y_{t})$ is the ($m$-dimensional) output signal and $V=(V_{1},\dots ,V_{n})$ are vector fields of suitable regularity. A fundamental question concerns the continuity of the solution map $X\mapsto Y$, strongly dependent on the used metric. 

\medskip

A decisive answer is given by rough path theory, which identifies a cascade of good metrics, determined by some \textit{regularity parameter} $\delta \equiv 1/q\in (0,1]$, and essentially given by $q$-variation resp. $\delta $-H\"{o}lder type metrics. As long as the driving signal $X$ possesses sufficient regularity, say $X$ is a continuous path of finite $q$-variation for $q\in [1,2)$ (in symbols $X\in C^{q\var}([0,T];\mathbb{R}^n)$), Lyons~\cite{Lyons1994} showed that the solution map $X\mapsto Y$ associated to equation~\eqref{eq:cODEintro} is a locally Lipschitz continuous map with respect to the $q$-variation topology. However, this strong regularity assumption on $X$ excludes many prominent examples from probability theory as sample paths of stochastic processes like (fractional) Brownian motion, martingales or many Gaussian processes.

In order to restore the continuity of the solution map associated to a controlled differential equation for continuous paths $X$ of finite $q$-variation for arbitrary large $q<\infty$, it is not sufficient anymore to consider a path ``only'' taking in the Euclidean space~$\R^n$, cf.~\cite{Lyons1991,Lyons2007}. Instead, $X$ must be viewed as $\lfloor q\rfloor$-level rough path, which in particular means $X$ takes values in a step-$\lfloor q\rfloor$ free nilpotent group~$G^{\lfloor q\rfloor}(\mathbb{R}^n)$: Let us recall that for $Z\in C^{1\var}(\mathbb{R}^n)$ its $\lfloor q\rfloor$-step signature is given by 
\begin{align*}
  S_{\lfloor q\rfloor}(Z)_{s,t}:=&\bigg (1, \int_{s<u<t}\dd Z_u, \dots, \int_{s<u_1<\dots <u_{\lfloor q\rfloor}<t}\dd Z_{u_1} \otimes \cdots \otimes \d Z_{u_{\lfloor q\rfloor}} \bigg). \\
\end{align*}
The corresponding space of all these lifted paths is 
\begin{equation*}
  G^{\lfloor q\rfloor}(\mathbb{R}^n):= \{S_{\lfloor q\rfloor}(Z)_{0,T} \,:\, Z\in C^{1\var}([0,T];\mathbb{R}^n)\}\subset \bigoplus_{k=0}^{\lfloor q\rfloor} \big(\mathbb{R}^n\big)^{\otimes k},
\end{equation*}
which we equip with the Carnot-Caratheodory metric $d_{cc}$, see Subsection~\ref{subsection:rough path} for more details. While in the case of $q\in [1,2)$ this reduces to a classical path $X\colon [0,T]\rightarrow \mathbb{R}^{n}$, in the case of $q > 2$ this means, intuitively, $X$ is a path enhanced with the information corresponding to the ``iterated integrals'' up to order $\lfloor q\rfloor$. In the context of rough path theory, the solution map $X\mapsto Y$, taking now a $\lfloor q\rfloor$-level rough path $X$ (in symbols $X\in C^{q\var}([0,T],G^{\lfloor q\rfloor}(\mathbb{R}^n))$) as input, is often called \textit{It\^o-Lyons map}.

In most applications, the output is regarded as path, $Y\in C^{q\text{-}\mathrm{var}}([0,T];\mathbb{R}^{m})$, although - depending on the
route one takes - it can be seen as rough path \cite{Lyons1998,Lyons2002,Friz2010} or controlled rough path \cite{Gubinelli2004,Friz2014}. It is a fundamental property of rough path theory that solving differential equations - that is, applying the It\^{o}-Lyons map - entails no loss of regularity: if the driving signal enjoys $\delta$-H\"{o}lder (resp. $q$-variation) regularity, then so does the output signal.

\medskip

Let us explain the basic idea which underlies this work. To this end -- only estimates matter -- take $X$ smooth and rewrite \eqref{eq:cODEintro} in the classical form $\dot{Y}=V(Y) \dot{X}$. Take $L^{p}$-norms on both sides to arrive at 
\begin{equation}\label{eq:W1intro}
  \left\Vert Y\right\Vert _{W^{1,p};\left[ 0,T\right] }\leq \left\Vert V\right\Vert _{\infty }\left\Vert X\right\Vert _{W^{1,p};\left[ 0,T\right] }
\end{equation}
in terms of the semi-norm $\left\Vert X\right\Vert _{W^{1,p};\left[0,T\right]}:= (\int_{0}^{T}|\dot{X}_{t}|^{p}\dd t)^{1/p}$. Here, of course, we have regularity $\delta =1~(\Leftrightarrow q=1$), and the extreme cases $p\in \left\{ 1,\infty \right\} $ $(=\{q,\infty \})$ amount exactly to the variation resp. H\"{o}lder estimates
\begin{align}\label{eq:VarHolIntro}
  \begin{split}
    &\left\Vert Y\right\Vert _{1\text{-var};\left[ 0,T\right] } \leq \left\Vert V \right\Vert_{\infty }\left\Vert X\right\Vert _{1\text{-var};\left[ 0,T \right] }, \\
    &\left\Vert Y\right\Vert _{1\text{-H\"{o}l};\left[ 0,T\right] } \leq \left\Vert V\right\Vert_{\infty }\left\Vert X\right\Vert _{1\text{-H\"{o}l};\left[ 0,T\right]}  
  \end{split}
\end{align}
since indeed $\left\Vert X\right\Vert _{1\text{-var};\left[ 0,T\right]}\approx\left\Vert X\right\Vert _{W^{1,1}\left[ 0,T\right] }$ resp. $\left\Vert X\right\Vert _{1\text{-H\"{o}l;}\left[ 0,T\right] }=\left\Vert X\right\Vert_{W^{1,\infty };\left[ 0,T\right] }$.  Conversely, one may view \eqref{eq:W1intro} as \textit{interpolation} of the estimates~\eqref{eq:VarHolIntro}, by regarding $W^{1,p}$, for any $p\in \lbrack 1,\infty ]$, as interpolation space of $W^{1,1}$ and $W^{1,\infty }$. This discussion suggests moreover that the solution map $X \mapsto Y$ is also continuous in $W^{1,p}$, even locally Lipschitz in the sense
\begin{equation}\label{eq:W1pCont}
  \| Y^1 - Y^2 \|_{W^{1,p};[0,T]} \lesssim  \| X^1 - X^2 \|_{W^{1,p};[0,T]},
\end{equation}
as indeed may be seen by some fairly elementary analysis. (Mind, however, that the solution map $X \mapsto Y$ is highly non-linear so that there is little hope to appeal to some ``general theory of interpolation''.) 

The estimates \eqref{eq:VarHolIntro} and \eqref{eq:W1pCont}, in case $p=1$ and $p=\infty$, are well-known (e.g. \cite{Lyons1998, Lyons2002, Friz2010}) to extend to arbitrarily low regularity $\delta \equiv 1/q\in (0,1]$, provided that, essentially, $\left\Vert\, \cdot\, \right\Vert _{1\text{-var};\left[ 0,T\right] }$ is replaced by $\left\Vert\, \cdot\, \right\Vert _{q\text{-var};\left[ 0,T\right] }$ (with the correct rough path interpretation on the right-hand sides above).

\medskip 

The question arises if the well-studied $q$-variation and $\delta$-H\"{o}lder formulation of rough path theory are not the extreme cases of a more flexible formulation of the theory, that comes - in the spirit of Besov (Nikolskii) spaces - with an additional {\it integrability parameter} $p$ $\in \lbrack q,\infty ]$. (Here, having $q$ as lower bound on $p$ is quite natural in view of known Besov embeddings: in the Besov-scale $(B_{r}^{\delta ,p})$, with additional fine-tuning parameter $r$, one has, always with $\delta = 1/q$, $C^{q\text{-var}}\approx N^{\delta,q}\equiv B_{\infty }^{\delta,q}$, in the form of tight (but strict) inclusions $N^{\delta +\varepsilon ,q}\subset C^{q\var} \subset N^{\delta ,q}$; see Remark \ref{rmk:besovvar}.)

\medskip 

The first contribution of this paper is to given an affirmative answer to the above question, in the generality of arbitrarily low  regularity $\delta >0$. With focus on the interesting case of regularity $\delta <1$, we have, loosely stated,

\begin{theorem}\label{thm:ThmOneIntro}
  Let $\delta \equiv 1/q\in (0,1]$ and $p$ $\in \lbrack q,\infty ]$. Then, for $Lip^{\gamma}$ vector fields~$V$ with $\gamma>q$, the It\^{o}-Lyons map (as defined below in~\eqref{eq:ito lyons map}) is locally Lipschitz continuous from a Besov-Nikolskii-type (rough) path space with regularity/integrability~$\left( \delta ,p\right)$ into a Besov-Nikolskii-type path space of identical regularity/integrability~$\left( \delta ,p\right)$ .
\end{theorem}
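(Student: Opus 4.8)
The plan is to reduce everything to the classical $q$-variation estimates for the It\^o--Lyons map, the bridge being a \emph{variation embedding} that re-expresses Besov--Nikolskii regularity as a statement about $q$-variation on subintervals; no attempt is made to invoke abstract interpolation, since $X\mapsto Y$ is nonlinear. Writing $\delta=1/q$, the key is that (up to constants depending only on $T$) the Besov--Nikolskii $(\delta,p)$-norm of a path or rough path $\X$ on $[0,T]$ is comparable to a mixed variation quantity of the schematic form
\[
  \|\X\|_{N^{\delta,p}} \;\approx\; \sup_{\mathcal P}\Big(\sum_{[s,t]\in\mathcal P}\frac{\|\X\|_{q\var;[s,t]}^{p}}{|t-s|^{p/q-1}}\Big)^{1/p},
\]
the supremum over finite partitions $\mathcal P$ of $[0,T]$, read as a supremum over single subintervals when $p=\infty$. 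This is designed to match the two classical endpoints: for $p=q$ the right-hand side collapses, by superadditivity of $q$-variation, to $\|\X\|_{q\var;[0,T]}$, while for $p=\infty$ it is the $\delta$-H\"older norm. Establishing such a two-sided bound --- a Riesz-type sharpening of the textbook Besov--variation embedding, valid also for $G^{\lfloor q\rfloor}(\R^n)$-valued paths via the metric $d_{cc}$ --- is the first and analytically most substantial step.

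The second ingredient is the observation that the known $q$-variation estimates for \eqref{eq:cODEintro} are \emph{uniform in the subinterval}. For $\Lip^{\gamma}$ vector fields with $\gamma>q$ and any bound $R$ on the $q$-variation of the drivers, there is a constant $C$ depending only on $q,\gamma,\|V\|_{\Lip^{\gamma}},R$ such that, on every subinterval $[s,t]\subseteq[0,T]$ of $q$-variation below a fixed threshold,
\[
  \|Y\|_{q\var;[s,t]}\;\le\;C\,\|\X\|_{q\var;[s,t]},
\]
and, for two drivers $\X^1,\X^2$ with solutions started at $y_0^1,y_0^2$,
\[
  \|Y^1-Y^2\|_{q\var;[s,t]}\;\le\;C\Big(\big(|y_0^1-y_0^2|+\rho_{q\var;[0,T]}(\X^1,\X^2)\big)\|\X^1\|_{q\var;[s,t]}\;+\;\rho_{q\var;[s,t]}(\X^1,\X^2)\Big);
\]
both are read off the standard local Lipschitz estimates (e.g.\ \cite{Lyons1998,Friz2010}) by noting that on short intervals the endpoint-difference term can be absorbed while always retaining the small factor $\|\X^1\|_{q\var;[s,t]}$, the global quantity $|y_0^1-y_0^2|+\rho_{q\var;[0,T]}$ being controlled a priori. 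Subintervals of large $q$-variation --- of which, by superadditivity, any partition contains only boundedly many --- are dealt with separately by the non-linear global estimate, affecting only constants.

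The proof is completed by inserting these pointwise-in-$[s,t]$ bounds into the mixed norm. For fixed $\mathcal P$ one raises the local estimates to the $p$-th power, divides by $|t-s|^{p/q-1}$, and sums over $[s,t]\in\mathcal P$; since $C$ is independent of the subinterval, the growth bound yields $\|Y\|_{N^{\delta,p}}\le C\,\|\X\|_{N^{\delta,p}}$, and the difference bound yields, after estimating $|y_0^1-y_0^2|+\rho_{q\var;[0,T]}(\X^1,\X^2)\lesssim_{T}|y_0^1-y_0^2|+\|\X^1-\X^2\|_{N^{\delta,p}}$ (trivial partition, using $p\ge q$) and $\sum_{\mathcal P}\|\X^1\|_{q\var;[s,t]}^{p}/|t-s|^{p/q-1}\le\|\X^1\|_{N^{\delta,p}}^{p}$,
\[
  \|Y^1-Y^2\|_{N^{\delta,p}}\;\lesssim\;|y_0^1-y_0^2|+\|\X^1-\X^2\|_{N^{\delta,p}},
\]
locally uniformly in the data. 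Transporting this back through the variation embedding gives the asserted local Lipschitz continuity of the It\^o--Lyons map on the Besov--Nikolskii scale with unchanged parameters $(\delta,p)$.

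I expect the real difficulty to lie entirely in the first step: proving that the Besov--Nikolskii norm is genuinely two-sided comparable to the mixed $(p,q)$-variation quantity above, in a form usable for $G^{\lfloor q\rfloor}(\R^n)$-valued paths (for $p>q$ the summand is the full $q$-variation on each piece, not an increment, so this is a Riesz-flavoured embedding rather than the classical one), together with the companion fact that this mixed quantity is, up to constants, insensitive to refining $\mathcal P$ into pieces of controlled $q$-variation --- precisely the property that legitimises applying the \emph{linear} $q$-variation RDE estimate, available only on short intervals, termwise. Once these two analytic facts are in place, the rough-path input is entirely off-the-shelf.
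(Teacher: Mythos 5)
Your overall strategy is the paper's: re-express the $(\delta,p)$-regularity through a Riesz/mixed-variation quantity of the form $\sup_{\mathcal P}\sum_{[s,t]\in\mathcal P}\|\cdot\|_{q\var;[s,t]}^p/|t-s|^{\delta p-1}$, and then feed the classical $q$-variation (control-function) rough path estimates termwise into this mixed norm. But the step you yourself identify as the crux is, as stated, false: the \emph{plain} Nikolskii norm $\|\X\|_{N^{\delta,p}}$ is \emph{not} two-sided comparable to the mixed quantity when $\delta<1$ and $p>q$. One direction holds, but the converse fails because $\|\cdot\|_{N^{\delta,p};[s,t]}^p$ is not super-additive in $(s,t)$; concretely, Brownian sample paths lie in $N^{1/2,p}([0,T];\R)$ for $p>2$ yet are not of finite $2$-variation, so they cannot satisfy any bound of the form $\|\X\|_{\tilde V^{1/2,p}}\lesssim\|\X\|_{N^{1/2,p}}$ (this is Remark~\ref{rmk:counterexample}). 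This is exactly why the paper does not work with $N^{\delta,p}$ but with its super-additive refinement $\hat N^{\delta,p}$, defined via $\sup_{\mathcal P}\sum_{[u,v]\in\mathcal P}\|\cdot\|_{N^{\delta,p};[u,v]}^p$, and Theorem~\ref{thm:ThmOneIntro} is a statement about that space; with $\hat N^{\delta,p}$ in place of $N^{\delta,p}$ your equivalence becomes Theorem~\ref{thm:riesz characterization} ($V^{\delta,p}=\tilde V^{\delta,p}=\hat N^{\delta,p}$). So the missing idea is precisely the choice of the correct Besov--Nikolskii-type space; without it the scheme cannot close, since a driver that is merely $N^{\delta,p}$ need not even be a $q$-variation rough path.

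Two further points where your sketch is thinner than what is actually needed. First, for rough paths the comparison of \emph{inhomogeneous distances} (not just norms) on the Nikolskii side is not a one-line consequence of the path-level equivalence: the analogue of your step 1 for $\rho_{\hat N^{\delta,p}}$ versus $\rho_{\tilde V^{\delta,p}}$ (Lemmas~\ref{lem:nikolskii distance 1} and~\ref{lem:nikolskii distance 2}) requires an induction over the tensor levels $k=1,\dots,\lfloor 1/\delta\rfloor$, with the correct exponents $p/k$, and the constant in one direction depends on the $\hat N^{\delta,p}$-norms of the two rough paths (so the equivalence is only local). Second, your small/large-interval splitting and absorption of the initial-condition term can be avoided: the paper runs the estimates of \cite[Theorem~10.26]{Friz2010} with a single control function built from the $1/\delta$-variations of both drivers and the \emph{normalized} level-wise distances $\rho^{(k)}_{1/\delta\var}/\rho^{(k)}_{\tilde V^{\delta,p}}$, so that the local increment of $Y^1-Y^2$ automatically carries the factor $\omega(u,v)^{\delta}$, whose mixed Riesz norm is finite by construction; this yields the uniform-in-subinterval bounds you postulate without any threshold argument. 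With the hatted space and these two refinements, your plan coincides with the paper's proof (Theorem~\ref{thm:ito map riesz variation} plus Corollaries~\ref{cor:ito map riesz type norm} and~\ref{cor:ito map nikolskii type norm}).
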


Somewhat surprisingly, it is possible to prove this via delicate application of classical $q$-variation estimates\footnote{The use of control functions is pure notational convenience.} in rough path theory; that is, morally, from the case $p=q$. On the other hand, a precise definition of the involved spaces - to make this reasoning possible - is a subtle matter. First, care is necessary for rough paths take values in a non-linear space, the step-$\lfloor q\rfloor$ free nilpotent group~$G^{\lfloor q\rfloor}(\mathbb{R}^n)$ equipped with the Carnot-Caratheodory metric~$d_{cc}$, which is a no standard setting for classical Besov resp. Nikolskii spaces $(B_{r}^{\delta ,p}$ resp. $N^{\delta
,p}:=B_{\infty }^{\delta ,p}$). Another and quite serious difficulty is the lack of super-additivity of Nikolskii norms. Recall that the "control"
\begin{equation*}
  \omega \left( s,t\right) :=\left\Vert X\right\Vert _{p\text{-var;}\left[ s,t\right] }^{p}
\end{equation*}
has the most desirable property of super-additivity, i.e. $\omega \left(s,t\right) +\omega \left( t,u\right) \leq \omega \left( s,u\right) $, a simple fact that is used throughout Lyons' theory. For instance, as a typical consequence 
\begin{equation*}
  \left\Vert X\right\Vert _{p\text{-var};\left[ 0,T\right] }^{p}=\sup_{\mathcal{P}\subset \lbrack 0,T]}\sum_{[u,v]\in \mathcal{P}}\left\Vert X\right\Vert _{p\text{-var};[u,v]}^{p},
\end{equation*}
where the supremum is taking over all partition of the interval $[0,T]$. Several other (rough) path space norms also have this property, as exploited e.g. in \cite{Friz2006}. However, this convenient property fails for the Besov spaces of consideration (unless $\delta =1$) and indeed, in general with strict inequality,  
\begin{equation*}
  \left\Vert X\right\Vert _{N^{\delta ,p}\text{;}\left[ 0,T\right] }^{p}\leq \sup_{\mathcal{P}\subset \lbrack 0,T]}\sum_{[u,v]\in \mathcal{P}}\left\Vert X\right\Vert _{N^{\delta ,p}\text{;}[u,v]}^{p}=:\left\Vert X\right\Vert _{\hat{N}^{\delta ,p}\text{;}\left[ 0,T\right] }^{p}.
\end{equation*}
This leads us to use the \textit{Besov-Nikolskii-type space} $\hat{N}^{\delta ,p}$, defined as those $X$ for which the right-hand side above is finite, as the correct space (in rough path or path space incarnation) to which we refer in Theorem~\ref{thm:ThmOneIntro}, at least in the new regimes $\delta<1,\,p\in (q,\infty )$.  

\medskip

A better understanding of these spaces is compulsory, and this is the second contribution of this paper. For instance, it is reassuring that one has tight inclusions of the form $N^{\delta +\varepsilon,p}\subset \hat{N}^{\delta ,p}\subset N^{\delta ,p}$ (Corollary~\ref{cor:212}). In fact, an exact characterization is possible in terms of \textit{Riesz type variation} spaces, in reference to Riesz~\cite{Riesz1910}, who considered such spaces (although with regularity parameter $\delta =1$). We have

\begin{theorem}\label{thm:ThmTwoIntro}
  Consider $\delta =1/q<1$ and $p\in (q,\infty )$. Then the Besov-Nikolskii-type space $\hat{N}^{\delta ,p}$ coincide with the Riesz type variation spaces $V^{\delta,p}$ and $\tilde{V}^{\delta ,p}$ defined respectively via finiteness of 
  \begin{eqnarray*}
    \left\Vert X\right\Vert_{V^{\delta ,p}}^{p} &:= &\sup_{\mathcal{P}\subset \lbrack 0,T]}\sum_{[u,v]\in \mathcal{P}}\frac{d_{cc}(X_{v},X_{u})^{p}}{|v-u|^{\delta p-1}}, \\ 
    \left\Vert X\right\Vert_{\tilde{V}^{\delta ,p}}^{p} &:= &\sup_{\mathcal{P}\subset \lbrack 0,T]}\sum_{[u,v]\in \mathcal{P}}\frac{\Vert X\Vert _{\frac{1}{\delta }\text{-}\mathrm{var};[u,v]}^{p}}{|v-u|^{\delta p-1}},
  \end{eqnarray*}
  for a rough path $X$ and the Carnot-Caratheodory distance $d_{cc}$. More general, this is also true for arbitrary metric spaces instead of $G^{\lfloor q\rfloor}(\mathbb{R}^n)$.
  
  Moreover, all associated inhomogenous rough path distances are locally Lipschitz equivalent.
\end{theorem}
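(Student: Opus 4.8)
The plan is to deduce the coincidence of $\hat N^{\delta,p}$, $V^{\delta,p}$ and $\tilde V^{\delta,p}$ (writing throughout $q=1/\delta$) from a handful of \emph{per-interval} inequalities, lifted to the partition-supremum functionals by one uniform device: given a partition $\mathcal P$ of $[0,T]$ and, on each $[u,v]\in\mathcal P$, a near-optimal sub-partition $\mathcal Q_{[u,v]}$, the union $\mathcal P':=\bigcup_{[u,v]\in\mathcal P}\mathcal Q_{[u,v]}$ is again a partition of $[0,T]$; hence any term-wise bound of the shape ``$(\mathcal P\text{-datum on }[u,v])\le C\sup_{\mathcal Q\subset[u,v]}\sum_{[a,b]\in\mathcal Q}(\text{Riesz datum on }[a,b])$'' sums to $\le C\,\|X\|_{V^{\delta,p}}^p$, and likewise to $\le C\,\|X\|_{\tilde V^{\delta,p}}^p$ in the variant with $q\var$-increments. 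We work with continuous paths; since the Carnot--Caratheodory structure enters only through $d_{cc}$ and through the super-additivity/monotonicity of $q\var$-type quantities, the core estimates are stated for $X$ valued in an arbitrary metric space, which also yields the claimed generality.

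\emph{Step 1: $\|X\|_{V^{\delta,p}}=\|X\|_{\tilde V^{\delta,p}}$.} The inequality ``$\le$'' is immediate from $d_{cc}(X_u,X_v)\le\|X\|_{q\var;[u,v]}$. For ``$\ge$'', fix $[u,v]$ and a sub-partition $\mathcal Q$, insert the factors $|b-a|^{\pm q(\delta p-1)/p}$ into $\sum_{[a,b]\in\mathcal Q}d_{cc}(X_a,X_b)^q$, and apply Hölder with exponents $p/q$ and $p/(p-q)$; the identity $(\delta p-1)\frac{q}{p-q}=1$ (which holds precisely because $\delta p-1=(p-q)/q$) collapses the length-factor to $|v-u|^{(p-q)/p}$, and raising to the power $p/q$ gives $\|X\|_{q\var;[u,v]}^p\,|v-u|^{-(\delta p-1)}\le\sup_{\mathcal Q\subset[u,v]}\sum_{[a,b]}d_{cc}(X_a,X_b)^p|b-a|^{-(\delta p-1)}$; now lift. \emph{Step 2: $\|X\|_{\hat N^{\delta,p}}\lesssim\|X\|_{\tilde V^{\delta,p}}$.} Fix $[u,v]$ and $h\in(0,v-u]$ and cut $[u,v]$ into consecutive blocks $I_0,I_1,\dots$ of length $h$ (the last one possibly shorter). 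For $w\in I_k$ one has $[w,w+h]\subset I_k\cup I_{k+1}$, so monotonicity of $q\var$ gives $d_{cc}(X_{w+h},X_w)\le\|X\|_{q\var;I_k\cup I_{k+1}}$ and hence $h^{-\delta p}\int_u^{v-h}d_{cc}(X_{w+h},X_w)^p\,dw\le h^{1-\delta p}\sum_k\|X\|_{q\var;I_k\cup I_{k+1}}^p\le 2^{\delta p-1}\sum_k\|X\|_{q\var;I_k\cup I_{k+1}}^p\,|I_k\cup I_{k+1}|^{-(\delta p-1)}$, using $h<|I_k\cup I_{k+1}|\le 2h$ and $1-\delta p\le0$; splitting the index $k$ into even and odd exhibits the last sum as at most two sums over partitions of $[u,v]$, so it is $\le 2\sup_{\mathcal Q\subset[u,v]}\sum_{[a,b]}\|X\|_{q\var;[a,b]}^p|b-a|^{-(\delta p-1)}$. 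Taking $\sup_h$ and lifting gives the claim.

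\emph{Step 3: $\|X\|_{V^{\delta,p}}\lesssim\|X\|_{\hat N^{\delta,p}}$.} Per interval this is the localized Besov--Nikolskii embedding $N^{\delta,p}([u,v])\hookrightarrow C^{(\delta-1/p)\hol}([u,v])$, with a constant independent of $[u,v]$, yielding $d_{cc}(X_u,X_v)^p|v-u|^{-(\delta p-1)}\lesssim\|X\|_{N^{\delta,p};[u,v]}^p$; summing over $\mathcal P$ finishes it. I would obtain the embedding by dyadic chaining: for $s<t$ in $[u,v]$ and $h_n=(t-s)2^{-n}$, a telescoping/averaging estimate bounds $d_{cc}(X_s,X_t)$ by $C\sum_n h_n^{-1/p}\big(\int d_{cc}(X_{r+h_n},X_r)^p\,dr\big)^{1/p}\le C\,\|X\|_{N^{\delta,p};[u,v]}\sum_n h_n^{\delta-1/p}$, a series that converges \emph{precisely} because $p>q$ forces $\delta-1/p>0$. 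Chaining Steps 1--3 gives $\hat N^{\delta,p}\approx V^{\delta,p}=\tilde V^{\delta,p}$ with constants depending only on $(\delta,p)$. I expect this embedding step --- establishing it for metric-space-valued paths with a scale-invariant constant --- to be the main analytic obstacle; the rest is elementary bookkeeping around the lifting device.

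For the inhomogeneous rough-path distances one repeats Steps 1--3 level by level. The $k$-th tensor level of a rough path over $[u,v]$ is a path in $(\R^n)^{\otimes k}$ of finite $(q/k)\var$, and the Besov--Nikolskii/Riesz data attached to it carry regularity $k\delta$ and integrability $p/k$; crucially the weight exponent is level-independent, $k\delta\cdot\frac pk-1=\delta p-1$, while $p/k>q/k$ keeps us in range. Replacing $d_{cc}(X_\cdot,X_\cdot)$ by the level-$k$ increments of $X$, $Y$ and their difference, the Hölder identity of Step 1 becomes $(\delta p-1)\frac{q/k}{\,p/k-q/k\,}=1$, the embedding of Step 3 still holds since $p/k>q/k$, and the blocking of Step 2 goes through because the $(q/k)\var$ of each component is super-additive and monotone, so the interval-monotonicity of the level-$k$ ``distance'' needed there is recovered after passing through it. Combining this with the \emph{known} local Lipschitz equivalence of the inhomogeneous and homogeneous $q\var$ rough path metrics, and with the endpoint cases $p\in\{q,\infty\}$, gives the asserted local Lipschitz equivalence of all the associated distances --- ``local'', as usual, being the price of the higher levels. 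The one delicate point here is propagating monotonicity/super-additivity through the blocking step at each level, which is where the argument is least automatic.
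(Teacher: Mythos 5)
Your treatment of the path-level statement (the coincidence $V^{\delta,p}=\tilde V^{\delta,p}=\hat N^{\delta,p}$, for arbitrary metric spaces) is correct and close in spirit to the paper's Theorem~\ref{thm:riesz characterization}: your Step 2 is exactly the paper's blocking argument, your ``lifting device'' is the super-additivity of the partition-sup functionals used throughout, your Step 1 H\"older computation with $(\delta p-1)\frac{q}{p-q}=1$ is an alternative derivation of the per-interval bound that the paper obtains via super-additive interpolation (Proposition~\ref{prop:riesz interpolation}), and your Step 3 replaces the paper's route (Nikolskii $\to$ fractional Sobolev, Lemma~\ref{lem:embedding sobolev nikolskii}, followed by Garsia--Rodemich--Rumsey, valid for metric-space targets) by a direct dyadic chaining; the latter is feasible (e.g.\ via Kuratowski embedding and Steklov averaging, or a good-point selection as in GRR), and your routing $\hat N\lesssim\tilde V=V\lesssim\hat N$ closes the loop without needing the $q$-variation estimate of Proposition~\ref{prop:variation embeddings}~(2). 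So far, fine.

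The genuine gap is in the ``moreover'' part. For the inhomogeneous distances, repeating Steps 1--3 ``level by level'' works for $\rho_{V^{\delta,p}}\approx\rho_{\tilde V^{\delta,p}}$ and for $\rho_{\hat N^{\delta,p}}\lesssim\rho_{\tilde V^{\delta,p}}$ (these only use nonnegativity, monotonicity and super-additivity of the two-parameter level-$k$ data, as in Lemmas~\ref{lem:equivalence Riesz distance} and~\ref{lem:nikolskii distance 1}), but it breaks down for the remaining direction $\rho_{\tilde V^{\delta,p}}\lesssim\rho_{\hat N^{\delta,p}}$. The reason is that $(u,v)\mapsto\pi_k(\X^1_{u,v}-\X^2_{u,v})$ is \emph{not} the increment of a path: by Chen's relation, the increments of $u\mapsto\pi_k(\X^1_{s,s+u}-\X^2_{s,s+u})$ contain cross terms $\pi_{k-j}(\X^1_{s,s+u})\otimes\pi_j(\X^1-\X^2)_{s+u,s+u+h}$ and $\pi_{k-j}(\X^1-\X^2)_{s,s+u}\otimes\pi_j(\X^2_{s+u,s+u+h})$, so the scalar embedding of your Step 3 cannot be applied to the datum $\rho^{(k)}_{N^{\delta,p}}(\X^1,\X^2)$ alone. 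One needs an induction over tensor levels, bounds on $\|\X^i\|_{\hat N^{\delta,p}}$ themselves to control the cross terms, and an absorption (buckling) argument to handle the level-$k$ H\"older-type factor that reappears on the right-hand side; this is precisely the content and the bulk of the paper's Lemma~\ref{lem:nikolskii distance 2}, and it is also the precise mechanism by which the equivalence degrades from Lipschitz to \emph{locally} Lipschitz (constants depending on $\|\X^i\|_{\hat N^{\delta,p}}$), which your sketch attributes to the higher levels only in passing. The appeal to the known equivalence of homogeneous and inhomogeneous $q$-variation metrics does not substitute for this step, since the statement at issue compares the three inhomogeneous $(\delta,p)$-distances with one another.
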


Let us also note that the above introduced Riesz type variation spaces agree (trivially) with the $q$-variation space in the extreme case of $p=q \equiv 1/\delta$. (In the Besov scale, this usually fails. For instance, we have the strict inclusion $W^{1,1} \subset C^{1\var}$; not every rectifiable path is absolutely continuous.) 

\medskip

We conclude this introduction with some pointers to previous works. The case of regularity $\delta >1/2$, essentially a Young regime, was considered in \cite{Zahle1998,Zahle2001}. Our result can also be regarded as extension of \cite{Promel2016}, which effectively dealt with regularity $\delta =1/q>1/3$ and accordingly integrability $p\geq q=3$. We note that path spaces with \textquotedblleft mixed\textquotedblright\ H\"{o}lder-variation regularity, similar in spirit to the Riesz type spaces (with tilde) also appear as tangent spaces to H\"{o}lder rough path spaces \cite[p.209]{Friz2010}, see also \cite{Aida2016}. Moreover, regularity of Cameron-Martin spaces associated to Gaussian processes with \textquotedblleft H\"{o}lder dominated $\rho$-variation of the covariance\textquotedblright\ (a key condition in Gaussian rough path theory, cf. \cite[Ch.~10]{Friz2014}, \cite{Friz2016}) can be expressed with the help of \textquotedblleft mixed\textquotedblright\ H\"older-variation regularity, see e.g. \cite[p.151]{Friz2014}. 

\medskip 

\noindent \textbf{Organization of the paper:} In Section~\ref{sec:riesz} we define and give various characterizations of our spaces, starting for the reader's convenience with the (much) simpler situation $\delta =1$. In particular, Theorem~\ref{thm:ThmTwoIntro}. is an effective summary of Theorem~\ref{thm:riesz characterization} and Lemmas~\ref{lem:equivalence Riesz distance}, \ref{lem:nikolskii distance 1} and \ref{lem:nikolskii distance 2}. Section~\ref{sec:ito map} is devoted to establish the local Lipschitz continuity of the It\^{o}-Lyons map in suitable rough path metrics and Theorem~\ref{thm:ThmOneIntro}. can be found in Theorem~\ref{thm:ito map riesz variation} and Corollaries~\ref{cor:ito map riesz type norm} and \ref{cor:ito map nikolskii type norm}. 

\medskip 

\noindent\textbf{Acknowledgment:}
P.K.F. is partially supported by the European Research Council through CoG-683164 and DFG research unit FOR2402. D.J.P. gratefully acknowledges financial support of the Swiss National Foundation under Grant No.~$200021\_163014$. Both authors are grateful for the excellent hospitality of the Hausdorff Research Institute for Mathematics, where the work was initiated.

\section{Riesz type variation}\label{sec:riesz}

In this section we introduce a class of function spaces which unifies the notions of H\"older and $q$-variation regularity. For this purpose we generalize an old version of variation due to F. Riesz and provide two alternative but equivalent characterizations of the so-called Riesz type variation and additionally various embedding results. As explained in the Introduction, the later application in the rough path framework requires us to set up all the function spaces for paths taking values in a metric spaces. \medskip

Let us briefly fix some basic notation: $\mathcal{P}$ is called partition of an interval $[s,t]\subset [0,T]$ if $\mathcal{P}=\{[t_i,t_{i+1}]\,:\, s=t_0 < t_1<\cdots <t_n=t,\, n\in \mathbb{N}\}$. In this case we write $\mathcal{P}\subset [s,t]$ indicating that $\mathcal{P}$ is a partition of the interval $[s,t]$. Furthermore, for such a partition $\mathcal{P}$ and a function $\chi \colon \{ (u,v) \,:\,  s\leq u <v\leq t\}\to \R$ we use the abbreviation
\begin{equation*}
  \sum_{[u,v]\in \mathcal{P}} \chi(u,v):= \sum_{i=0}^{n-1} \chi(t_i,t_{i+1}).
\end{equation*}
If not otherwise specified, $(E,d)$ denote a metric space, $T\in (0,\infty)$ is finite real number and $C([0,T];E)$ stands for the set of all continuous functions $f\colon [0,T] \to E$.\medskip

Two frequently used topologies to measure the regularity of functions are the H\"older continuity and the $q$-variation: 

The \textit{H\"older continuity} of a function $f\in C([0,T];E)$ is measured by 
\begin{equation*}
  \|f\|_{\delta\hol;[s,t]}:=\sup_{u,v\in[s,t],\,u<v}\frac{d(f_u,f_v)}{|v-u|^\delta},\quad  \delta\in (0,1],
\end{equation*}
and $C^{\delta\hol}([0,T];E)$ stands for the set of all functions $f\in C([0,T];E)$ such that $\|f\|_{\delta\hol}:= \|f\|_{\delta\hol;[0,T]} <\infty$. The case $\delta=1$, that is the H\"older continuity of order $1$, is usually refer to as Lipschitz continuity.

The \textit{$q$-variation} of a function $f\in C([0,T];E)$ is defined by
\begin{equation}\label{eq:p-varition}
  \|f\|_{q\var;[s,t]}:=\bigg(\sup_{\mathcal{P}\subset [s,t]} \sum_{[u,v]\in \mathcal{P}} d(f_u,f_v)^q \bigg)^\frac{1}{q},\quad q\in[1,\infty),
\end{equation}
where the supremum is taken over all partitions $\mathcal{P}$ of the interval $[s,t]$. The set of all functions $f\in C([0,T];E)$ with $\|f\|_{q\var}:= \|f\|_{q\var;[0,T]}<\infty$ is denoted by $C^{q\var}([0,T];E)$. The notion of $q$-variation can be traced back to N. Wiener~\cite{Wiener1924}. The special case of $1$-variation is also called bounded variation. A comprehensive list of generalizations of $q$-variation and further references can be found in~\cite{Dudley1999}.

\begin{remark}\label{rmk:besovvar}
  Classical function spaces as fractional Sobolev or more general Besov spaces do not provide a unifying framework simultaneously covering the space of H\"older continuous functions and the space of continuous functions with finite $q$-variation. For example, let us replace for a moment $(E,d)$ by the Euclidean space $(\mathbb{R},|\cdot|)$ and denote the homogeneous Besov spaces by $B^{\delta,p}_{r}([0,T];\mathbb{R})$. While the H\"older space $C^{\delta\hol}([0,T];\R)$ is a special case of Besov spaces, namely the homogeneous Besov space $B^{\delta,\infty}_{\infty}([0,T];\R)$, for $\delta \in (0,1)$, the $q$-variation space $C^{q\var}([0,T];\R)$ is not covered by the wide class of Besov spaces. Indeed, classical embedding theorems, \cite{Young1936} and \cite{Love1938}, yield the following continuous embeddings:
  \begin{equation*}
    B^{\alpha,p}_{\infty}([0,T];\mathbb{R}) \subset C^{p\var} ([0,T];\mathbb{R})\subset B^{1/p,p}_{\infty}([0,T];\mathbb{R}),
  \end{equation*}
  for $p\in (1,\infty)$ and $\alpha \in (1/p,1)$, see also~\cite{Simon1990} and \cite{Friz2006}. In particular, it is known that the second embedding is not an equality. An example can be found in~\cite{Terehin1967}. The relation between the space of functions with finite $q$-variation and Besov spaces was investigated in the literature for a long time, see for example \cite{Musielak1961}, \cite{Peetre1976}, \cite{Bourdaud2006} and \cite{Rosenbaum2009}. For a comprehensive introduction to function spaces we refer to \cite{Triebel2010}.
\end{remark}

To set up a class of function spaces covering precisely and simultaneously the H\"older spaces and the $q$-variation spaces, we introduce a generalized version of a variation due to F. Riesz~\cite{Riesz1910}. For $\delta \in (0,1]$ and $p\in [1/\delta,\infty)$ the \textit{Riesz type variation} of a function $f\in C([0,T];E)$ is given by
\begin{equation}\label{eq:Riesz variation}
  \|f\|_{V^{\delta,p};[s,t]}:= \bigg( \sup_{\mathcal{P}\subset [s,t]} \sum_{[u,v]\in \mathcal{P}} \frac{d(f_u,f_v)^p}{|v-u|^{\delta p-1}}\bigg)^\frac{1}{p},   
\end{equation}
for a subinterval $[s,t]\subset [0,T]$ and for $p=\infty$ we set 
\begin{equation}\label{eq:Riesz variation infty}
  \|f\|_{V^{\delta,\infty};[s,t]} :=\sup_{u,v\in[s,t],\,u<v}\frac{d(f_u,f_v)}{|v-u|^\delta}.
\end{equation}
The set $V^{\delta,p}([0,T];E)$ denotes all continuous functions $f\in C([0,T];E)$ such that $\|f\|_{V^{\delta,p}}:=\|f\|_{V^{\delta,p};[0,T]}<\infty$. The case of $\delta=1$ was originally defined by F. Riesz~\cite{Riesz1910} and a similar generalization as given in \eqref{eq:Riesz variation} was already mentioned in \cite[p.~114,~(14')]{Peetre1976}.

\begin{proposition}\label{prop:riesz interpolation}
  Let $(E,d)$ be a metric space and $T\in (0,\infty)$. For $\delta \in (0,1]$ and $p\in [1/\delta, \infty]$ one has the following relations
  \begin{equation*}
    C^{\delta\hol}([0,T];E) = V^{\delta,\infty}([0,T];E) \subset V^{\delta,p}([0,T];E)\subset V^{\delta,1/\delta}([0,T];E)= C^{1/\delta\var}([0,T];E).
  \end{equation*}
  More precisely, the $1/\delta$-variation of a function $f\in V^{\delta,p}([0,T];E)$ satisfies the bound
  \begin{equation*}
    \|f\|_{1/\delta \var;[s,t]}\leq  \|f\|_{V^{\delta,p} ;[s,t]} |t-s|^{\delta -\frac{1}{p}} 
  \end{equation*}
  for every subinterval $[s,t]\subset [0,T]$. 
\end{proposition}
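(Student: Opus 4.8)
The plan is to settle the two equalities by unwinding the definitions, and then to obtain both inclusions \emph{and} the quantitative estimate from one well-chosen application of Hölder's inequality; I expect the only delicate point to be the bookkeeping of exponents, not any substantive difficulty.

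First, the identification $C^{\delta\hol} = V^{\delta,\infty}$ is nothing but the definition \eqref{eq:Riesz variation infty}, so there is nothing to prove there. At the other end, when $p = 1/\delta$ the weight exponent $\delta p - 1$ vanishes, so each summand in \eqref{eq:Riesz variation} reduces to $d(f_u,f_v)^{1/\delta}$ and the defining supremum becomes exactly the one in \eqref{eq:p-varition}; hence $V^{\delta,1/\delta} = C^{1/\delta\var}$ with equality of seminorms. For the first inclusion I would simply insert the bound $d(f_u,f_v) \leq \|f\|_{\delta\hol;[s,t]}|v-u|^\delta$ into \eqref{eq:Riesz variation}: for any partition $\mathcal{P}$ of $[s,t]$,
\[
  \sum_{[u,v]\in\mathcal{P}} \frac{d(f_u,f_v)^p}{|v-u|^{\delta p - 1}} \;\leq\; \|f\|_{\delta\hol;[s,t]}^p \sum_{[u,v]\in\mathcal{P}} |v-u| \;=\; \|f\|_{\delta\hol;[s,t]}^p\,|t-s|,
\]
so that $\|f\|_{V^{\delta,p};[s,t]} \leq \|f\|_{\delta\hol;[s,t]}\,|t-s|^{1/p}$, which is finite on $[0,T]$; this gives $V^{\delta,\infty}\subset V^{\delta,p}$.

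The heart of the matter is the displayed estimate. I would fix a partition $\mathcal{P}=\{[t_i,t_{i+1}]\}_{i}$ of $[s,t]$, abbreviate $d_i := d(f_{t_i},f_{t_{i+1}})$ and $\Delta_i := t_{i+1}-t_i$ (so $\sum_i \Delta_i = t-s$), and, with $\beta := 1 - \tfrac{1}{\delta p}$, write
\[
  d_i^{1/\delta} \;=\; \Big(\frac{d_i}{\Delta_i^{\delta\beta}}\Big)^{1/\delta}\cdot \Delta_i^{\beta}.
\]
Applying Hölder's inequality with exponents $a := \delta p$ and $a' := \tfrac{\delta p}{\delta p - 1}$ (legitimate since $a \geq 1$ because $p \geq 1/\delta$): raising the first factor to the power $a$ gives precisely $d_i^{p}/\Delta_i^{\delta p - 1}$ and raising the second to the power $a'$ gives precisely $\Delta_i$, whence
\[
  \sum_i d_i^{1/\delta} \;\leq\; \Big(\sum_i \frac{d_i^{p}}{\Delta_i^{\delta p - 1}}\Big)^{\!1/(\delta p)}\Big(\sum_i \Delta_i\Big)^{\!1 - 1/(\delta p)} \;\leq\; \|f\|_{V^{\delta,p};[s,t]}^{1/\delta}\,|t-s|^{1-1/(\delta p)}.
\]
Taking the supremum over $\mathcal{P}$ and then the $\delta$-th power yields $\|f\|_{1/\delta\var;[s,t]} \leq \|f\|_{V^{\delta,p};[s,t]}\,|t-s|^{\delta - 1/p}$, which is the asserted bound; since $\delta - 1/p \geq 0$ it also shows $V^{\delta,p}\subset V^{\delta,1/\delta}$, completing the chain of inclusions.

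As for the special cases: the endpoint $p = \infty$ is covered by the elementary estimate of the second paragraph (reading $1/p$ as $0$), and $p = 1/\delta$ makes the final inequality a tautology. So there is no genuine obstacle; the only thing requiring care is that the three exponents $a$, $a'$, $\beta$ be chosen compatibly — precisely so that the first sum becomes the Riesz seminorm to the power $p$ and the second sum telescopes to $|t-s|$ — together with the (trivial) separate treatment of $p\in\{1/\delta,\infty\}$.
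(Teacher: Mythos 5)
Your proof is correct, and the exponent bookkeeping in the Hölder step checks out: with $a=\delta p$, $a'=\delta p/(\delta p-1)$ and $\beta=1-\tfrac{1}{\delta p}$ the first factor indeed reproduces the Riesz summand $d_i^p/\Delta_i^{\delta p-1}$ and the second telescopes to $|t-s|$, which gives exactly the claimed bound with constant $1$; the endpoint cases $p\in\{1/\delta,\infty\}$ are harmless as you say (for $p=\infty$ one either chains your two estimates, or runs the same splitting with the sup/sum version of Hölder). The route differs mildly from the paper's: there, the pointwise bound $d(f_u,f_v)\le\|f\|_{V^{\delta,p};[u,v]}|v-u|^{\delta-1/p}$ is taken to the power $1/\delta$ and the resulting quantity $\omega(u,v)=\|f\|_{V^{\delta,p};[u,v]}^{1/\delta}|v-u|^{1-1/(\delta p)}$ is recognized as super-additive (Remark~\ref{rmk:supper additive}, products of powers of super-additive functions), so that summing over a partition is handled abstractly by the control-function formalism that the paper reuses throughout Section~\ref{sec:ito map}. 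You instead apply Hölder's inequality directly to the partition sum, which is more self-contained (no appeal to Remark~\ref{rmk:supper additive}) and works with the raw increments $d_i^p/\Delta_i^{\delta p-1}$ rather than the subinterval seminorms; since the super-additivity of such products is itself proved via Hölder, the two arguments are morally the same interpolation, the paper's phrasing just packaging it in the language of controls that is convenient later.
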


Before we come to the proof, we need the following remark about super-additive functions.

\begin{remark}\label{rmk:supper additive}
 Setting $\Delta_T:=\{(s,t)\,:\,0\leq s\leq t\leq T\}$ a function $\omega\colon \Delta_T \to [0,\infty)$ is called super-additive if
 \begin{equation*} 
   \omega (s,t) +\omega (t,u) \leq \omega (s,u)\quad \text{for}\quad 0\leq s \leq t \leq u \leq T.   
 \end{equation*}

 Furthermore, if $\omega$ and $\tilde \omega$ are super-additive and $\alpha, \beta >0$ with $\alpha +\beta \geq 1$, then $\omega^\alpha \tilde \omega^\beta$ is super-additive. The proof works as \cite[Exercise~1.8 and 1.9]{Friz2010}.
\end{remark}

\begin{proof}[Proof of Proposition~\ref{prop:riesz interpolation}]
  The identifies 
  \begin{equation*}
    C^{\delta\hol}([0,T];E) = V^{\delta,\infty}([0,T];E)\quad \text{and}\quad V^{\delta,1/\delta}([0,T];E)= C^{1/\delta\var}([0,T];E)
  \end{equation*}
  are ensured by the definitions of the involved function spaces.
  
  The first embedding can be seen by
  \begin{equation*}
    \|f\|_{V^{\delta,p}}^p=  \sup_{\mathcal{P}\subset [0,T]} \sum_{[u,v]\in \mathcal{P}}\bigg( \frac{d(f_u,f_v)}{|v-u|^{\delta}}\bigg)^{p}|v-u|
    \leq T  \|f\|_{C^\delta ;[0,T]}^p , \quad f \in C^\delta ([0,T];E).
  \end{equation*}

  The second embedding is trivial for $\delta=1/p$. For $\delta >1/p$ we first observe that 
  \begin{equation}\label{eq:holder estimate riesz}
    d(f_s,f_t)\leq\bigg( \frac{d(f_s,f_t)^p}{|t-s|^{\delta p -1}} \bigg)^{\frac{1}{p}}|t-s|^{\delta -\frac{1}{p}}\leq \|f\|_{V^{\delta,p};[s,t]}|t-s|^{\delta -\frac{1}{p}},\quad [s,t]\subset [0,T],
  \end{equation}
  for $f\in V^{\delta,p}([0,T];E)$ and thus 
  \begin{equation*}
    d(f_s,f_t)^{\frac{1}{\delta}}\leq \|f\|_{V^{\delta,p};[s,t]}^{\frac{1}{\delta }}|t-s|^{1 -\frac{1}{\delta p}}=:\omega (s,t).
  \end{equation*}
  Since $\|f\|_{V^{\delta,p};[s,t]}^p$ and $|t-s|$ are super-additive as functions in $(s,t)\in \Delta_T$ and $(\delta p)^{-1}+ 1- (\delta p)^{-1}\geq 1$, $\omega$ is a super-additive by Remark~\ref{rmk:supper additive}. Hence, using the super-additivity of $\omega$, we arrive at the claimed estimate 
  \begin{equation*}
    \|f\|_{1/\delta \var;[s,t]}\leq \|f\|_{V^{\delta,p} ;[s,t]} |t-s|^{\delta -\frac{1}{p}}.
  \end{equation*}
\end{proof}

The next lemma justifies the definition of the Riesz type variation in the case of $p=\infty$, cf.~\eqref{eq:Riesz variation infty}, and collects some embedding results of these sets of functions.

\begin{lemma}\label{lem:properties riesz}
  Let $(E,d)$ be a metric space, $T\in (0,\infty)$ and $[s,t]\subset [0,T]$. Suppose $\delta \in (0,1)$ and $p\in [1/\delta,\infty]$. 
  \begin{enumerate}
   \item If $\delta >1/p$, then $V^{\delta,p}([0,T];E)\subset C^{(\delta-1/p)\hol}([0,T];E)$ with the estimate 
         \begin{equation*}
           d(f_s,f_t)\leq \|f\|_{V^{\delta,p};[s,t]}|t-s|^{\delta -\frac{1}{p}},\quad f\in V^{\delta,p}([0,T];E).
         \end{equation*}
   \item If $\delta,\delta^\prime \in (0,1)$ and $p,p^\prime \in [1/\delta,\infty]$ with $\delta^{\prime} < \delta $ and $p^{\prime}< p$, then one has 
         \begin{equation*}
           V^{\delta,p}([0,T];E) \subset V^{\delta^{\prime},p}([0,T];E) \quad \text{and}\quad V^{\delta,p}([0,T];E) \subset V^{\delta,p^{\prime}}([0,T];E)
         \end{equation*}
         with the estimates for $f\in V^{\delta,p}([0,T];E)$
         \begin{equation*}
           \|f\|_{V^{\delta^\prime,p};[s,t]}\leq (t-s)^{\delta-\delta^{\prime}} \|f\|_{V^{\delta,p};[s,t]}
           \quad \text{and}\quad 
           \|f\|_{V^{\delta,p^\prime};[s,t]}\leq (t-s)^{\frac{1}{p^{\prime}}-\frac{1}{p}} \|f\|_{V^{\delta,p};[s,t]}.
         \end{equation*}
   \item For every $f\in V^{\delta,\infty}([0,T];E)$ one has 
         \begin{equation*}
           \lim_{p\to \infty} \|f\|_{V^{\delta,p};[s,t]}=\|f\|_{V^{\delta,\infty};[s,t]}.
         \end{equation*}
  \end{enumerate}
\end{lemma}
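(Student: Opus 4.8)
The plan is to read all three statements off directly from the defining supremum in \eqref{eq:Riesz variation}, using nothing beyond H\"older's inequality. \textbf{Part (1).} For $f\in V^{\delta,p}([0,T];E)$ with $\delta>1/p$ and $[s,t]\subset[0,T]$, testing \eqref{eq:Riesz variation} against the trivial partition $\{[s,t]\}$ gives $d(f_s,f_t)^p/|t-s|^{\delta p-1}\le\|f\|_{V^{\delta,p};[s,t]}^p$, that is $d(f_s,f_t)\le\|f\|_{V^{\delta,p};[s,t]}\,|t-s|^{\delta-1/p}$ --- as already recorded in \eqref{eq:holder estimate riesz}. Since $0<\delta-1/p<1$, taking the supremum over $u<v$ yields $f\in C^{(\delta-1/p)\hol}$; for $p=\infty$ nothing is to be shown because $V^{\delta,\infty}=C^{\delta\hol}$ by \eqref{eq:Riesz variation infty}. \textbf{Part (2), first embedding.} I would fix a partition $\mathcal{P}\subset[s,t]$ and factor, on each $[u,v]\in\mathcal{P}$,
\[
\frac{d(f_u,f_v)^p}{|v-u|^{\delta'p-1}}=\frac{d(f_u,f_v)^p}{|v-u|^{\delta p-1}}\,|v-u|^{(\delta-\delta')p}\le (t-s)^{(\delta-\delta')p}\,\frac{d(f_u,f_v)^p}{|v-u|^{\delta p-1}},
\]
using $|v-u|\le t-s$ and $\delta-\delta'>0$; summing, taking the supremum over $\mathcal{P}$ and extracting $p$-th roots gives $\|f\|_{V^{\delta',p};[s,t]}\le(t-s)^{\delta-\delta'}\|f\|_{V^{\delta,p};[s,t]}$, the case $p=\infty$ being the same bound applied to a single increment.

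\textbf{Part (2), second embedding} is the only step involving a genuine inequality. For $\mathcal{P}=\{[t_i,t_{i+1}]\}\subset[s,t]$ the key identity is
\[
\frac{d(f_{t_i},f_{t_{i+1}})^{p'}}{|t_{i+1}-t_i|^{\delta p'-1}}=\left(\frac{d(f_{t_i},f_{t_{i+1}})^{p}}{|t_{i+1}-t_i|^{\delta p-1}}\right)^{p'/p}|t_{i+1}-t_i|^{\,1-p'/p},
\]
obtained by matching the powers of $|t_{i+1}-t_i|$. H\"older's inequality on the sum over $\mathcal{P}$, with conjugate exponents $p/p'$ and $p/(p-p')$, then gives
\[
\sum_{[u,v]\in\mathcal{P}}\frac{d(f_u,f_v)^{p'}}{|v-u|^{\delta p'-1}}\le\left(\sum_{[u,v]\in\mathcal{P}}\frac{d(f_u,f_v)^{p}}{|v-u|^{\delta p-1}}\right)^{p'/p}(t-s)^{\,1-p'/p}\le\|f\|_{V^{\delta,p};[s,t]}^{p'}(t-s)^{\,1-p'/p};
\]
taking the supremum over $\mathcal{P}$ and then $1/p'$-th roots yields the claim, since $(1-p'/p)/p'=1/p'-1/p$. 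For $p=\infty$ one argues directly: $d(f_u,f_v)^{p'}/|v-u|^{\delta p'-1}=(d(f_u,f_v)/|v-u|^\delta)^{p'}|v-u|\le\|f\|_{V^{\delta,\infty};[s,t]}^{p'}|v-u|$, and summing gives $\|f\|_{V^{\delta,p'};[s,t]}\le(t-s)^{1/p'}\|f\|_{V^{\delta,\infty};[s,t]}$.

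\textbf{Part (3).} I would sandwich $\|f\|_{V^{\delta,p};[s,t]}$ as $p\to\infty$. The upper bound is the $p=\infty$ case of the second embedding just proved, $\|f\|_{V^{\delta,p};[s,t]}\le(t-s)^{1/p}\|f\|_{V^{\delta,\infty};[s,t]}$, so $\limsup_{p\to\infty}\|f\|_{V^{\delta,p};[s,t]}\le\|f\|_{V^{\delta,\infty};[s,t]}$ since $(t-s)^{1/p}\to1$. For the lower bound, for any $u<v$ in $[s,t]$, testing \eqref{eq:Riesz variation} against any partition of $[s,t]$ having $[u,v]$ as one of its subintervals gives $\|f\|_{V^{\delta,p};[s,t]}\ge(d(f_u,f_v)/|v-u|^\delta)\,|v-u|^{1/p}$; letting $p\to\infty$ and then taking the supremum over $u<v$ gives $\liminf_{p\to\infty}\|f\|_{V^{\delta,p};[s,t]}\ge\|f\|_{V^{\delta,\infty};[s,t]}$, completing the sandwich. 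I do not expect a real obstacle here: the only care needed is the exponent bookkeeping in the H\"older step of part (2) and verifying that the endpoint cases $p=\infty$ and $p'=\infty$ are subsumed by the same computations.
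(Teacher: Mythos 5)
Your proof is correct and follows essentially the same route as the paper: the trivial-partition bound for (1), the same power-splitting and H\"older estimates summed over partitions for (2), and a sandwich argument for (3) whose lower bound is again the single-increment estimate. The only (harmless) deviation is in the upper bound of (3), where you use the direct bound $\|f\|_{V^{\delta,p};[s,t]}\le (t-s)^{1/p}\|f\|_{V^{\delta,\infty};[s,t]}$ rather than the paper's interpolation $\|f\|_{V^{\delta,p};[s,t]}\le\|f\|_{V^{\delta,q};[s,t]}^{q/p}\|f\|_{V^{\delta,\infty};[s,t]}^{1-q/p}$; both give the same $\limsup$.
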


\begin{proof}
  (1) The first assertion follows directly by the estimate~\eqref{eq:holder estimate riesz}.
  
  (2) Let $\mathcal{P}$ be a partition of the interval $[s,t]\subset [0,T]$. For $f\in V^{\delta,p}([0,T];E)$ the estimates 
  \begin{equation*}
    \sum_{[u,v]\in \mathcal{P}} \frac{d(f_u,f_v)^p}{|v-u|^{\delta^\prime p -1}} \leq |t-s|^{(\delta -\delta^\prime) p} \sum_{[u,v]\in \mathcal{P}} \frac{d(f_u,f_v)^p}{|v-u|^{\delta p -1}}
  \end{equation*}
  and (using H\"older's inequality)
  \begin{equation*}
    \sum_{[u,v]\in \mathcal{P}} \frac{d(f_u,f_v)^{p^\prime}}{|v-u|^{\delta p^\prime -1}} 
    = \sum_{[u,v]\in \mathcal{P}} \bigg( \frac{d(f_u,f_v)}{|v-u|^{\delta - \frac{1}{p}}}\bigg)^{p^\prime} |v-u|^{1-\frac{p^\prime}{p}}
    \leq |t-s|^{1-\frac{p^\prime}{p}} \bigg(\sum_{[u,v]\in \mathcal{P}} \frac{d(f_u,f_v)^{p}}{|v-u|^{\delta p -1}}\bigg)^{\frac{p^\prime}{p}} 
  \end{equation*}
  lead to (2) by taking the supremum over all partition of $[s,t]$.
  
  (3) Due to Lemma~\ref{lem:properties riesz}~(1), we have 
  \begin{equation*}
    \|f\|_{V^{\delta,\infty};[s,t]}\leq \liminf_{p\to \infty}  \|f\|_{V^{\delta,p};[s,t]},  \quad f\in V^{\delta,\infty}([0,T];E).
  \end{equation*} 
  Furthermore, for $p>q\geq 1$ we get 
  \begin{equation*}
    \|f\|_{V^{\delta,p};[s,t]}
    = \bigg( \sup_{\mathcal{P}\subset [s,t]} \sum_{[u,v]\in \mathcal{P}} \frac{d(f_u,f_v)^q}{|v-u|^{\delta q-1}} \frac{d(f_u,f_v)^{p-q}}{|v-u|^{\delta (p-q)}}\bigg)^\frac{1}{p}
    \leq \|f\|_{V^{\delta,q};[s,t]}^{\frac{q}{p}} \|f\|_{V^{\delta,\infty};[s,t]}^{1-\frac{q}{p}}
  \end{equation*}
  and thus
  \begin{equation*}
    \limsup_{p\to \infty}  \|f\|_{V^{\delta,p};[s,t]}\leq \|f\|_{V^{\delta,\infty};[s,t]}.
  \end{equation*} 
\end{proof}

In the following we introduce two different but equivalent characterizations of the Riesz type variation~\eqref{eq:Riesz variation}. The first one is based on the classical notion of $q$-variation due to Wiener and thus is particularly convenient for applications in rough path theory. The second one relies on certain Besov spaces, namely Nikolskii spaces, which allows to related the Riesz type variation spaces to classical function spaces as fractional Sobolev spaces. See Lemma~\ref{lem:embedding bounded variation} and Theorem~\ref{thm:riesz characterization} for the equivalence. \medskip

In order to give a characterization of Riesz type variation of a function $f\in C([0,T];E)$ in terms of $q$-variation, we introduce a \textit{mixed H\"older-variation regularity} by
\begin{equation}\label{eq:holder-variation mixed norm}
  \|f\|_{\tilde{V}^{\delta,p};[s,t]}:=\bigg( \sup_{\mathcal{P}\subset [s,t]} \sum_{[u,v]\in \mathcal{P}} \frac{\|f\|_{\frac{1}{\delta}\text{-}\mathrm{var};[u,v]}^p}{|v-u|^{\delta p-1}}\bigg)^\frac{1}{p}, \quad \delta \in (0,1], \, p\in [1/\delta,\infty),
\end{equation}
for a subinterval $[s,t]\subset [0,T]$ and in the case of $p=\infty$ we define 
\begin{equation*}
  \|f\|_{\tilde{V}^{\delta,\infty};[s,t]}:= \sup_{\mathcal{P}\subset [s,t]} \sup_{[u,v]\in\mathcal{P}}\frac{\|f\|_{\frac{1}{\delta}\text{-}\mathrm{var};[u,v]}}{|v-u|^{\delta }}.
\end{equation*}
Moreover, we denote by $\tilde V^{\delta, p}([0,T];E)$ the set of all functions $f\in C([0,T];E)$ such that $\|f\|_{\tilde V^{\delta,q}}:=\|f\|_{\tilde V^{\delta,q};[0,T]}< \infty$. \medskip 

An alternative way to measure Riesz type variation of a function $f\in C([0,T];E)$ is related to homogeneous Nikolskii spaces. Hence, we briefly recall the notation of homogeneous \textit{Nikolskii spaces}, which correspond to the homogeneous Besov spaces $B^{\delta,p}_{\infty}([0,T];E)$. For $\delta \in (0,1]$ and $p\in [1,\infty)$ we define
\begin{equation*}
  \|f\|_{N^{\delta,p};[s,t]}:=\sup_{|t-s|\geq h >0} h^{-\delta} \bigg( \int_{s}^{t-h} d (f_u,f_{u+h})^{p}\dd u \bigg)^{\frac{1}{p}}
\end{equation*}
for a subinterval $[s,t]\subset [0,T]$ and for $p=\infty$ we further set 
\begin{equation*}
  \|f\|_{N^{\delta,\infty};[s,t]}:=\sup_{|t-s|\geq h >0} h^{-\delta} \sup_{u\in [s, t-h]}  d(f_u,f_{u+h}).
\end{equation*}
The set of all functions $f\in C([0,T];E)$ such that $\|f\|_{N^{\delta,p}}:=\|f\|_{N^{\delta,p};[0,T]}<\infty$ is denoted by $N^{\delta, p}([0,T];E)$.

Using the definition of Nikolskii regularity, we introduce a \textit{refined Nikolskii type regularity} by
\begin{equation}\label{eq:nikolskii norm integral}
  \|f\|_{\hat N^{\delta,p};[s,t]}:=\bigg( \sup_{\mathcal{P}\subset [s,t]} \sum_{[u,v]\in \mathcal{P}} \|f\|_{N^{\delta,p};[u,v]}^p\bigg)^{\frac{1}{p}}, \quad \delta \in (0,1], \, p\in [1,\infty),
\end{equation}
for $f\in C([0,T];E)$ and a subinterval $[s,t]\subset [0,T]$. For $p=\infty$ we set 
\begin{equation*}
  \|f\|_{\hat N^{\delta,\infty};[s,t]}:= \sup_{\mathcal{P}\subset [s,t]}\sup_{[u,v]\in\mathcal{P}} \|f\|_{N^{\delta,\infty};[u,v]}.
\end{equation*}
Furthermore, $\hat N^{\delta, p}([0,T];E)$ stands for the set of all functions $f \in C([0,T];E)$ such that $\|f\|_{\hat N^{\delta,q}}:=\|f\|_{\hat N^{\delta,q};[0,T]}< \infty$. 

\begin{remark}
  While $\|\cdot\|_{\hat N^{\delta,p};[s,t]}^p$ is a super-additive function in $(s,t)\in\Delta_T$ by its definition, this is not true for the Nikolskii regularity $\|\cdot\|_{N^{\delta,p};[s,t]}^p$ itself if $\delta\in (0,1)$. The later can be seen particularly by Remark~\ref{rmk:counterexample}. 
\end{remark}

In the next two subsections we show that the just introduced two ways of measuring path regularity are indeed equivalent to the Riesz type variation. We start by considering the special case of regularity $\delta=1$, that is the space $V^{1,p}$, in Subsection~\ref{subsec:characterization bounded variation}. The equivalence for general Riesz type variation spaces is content of Subsection~\ref{subsec:characterization riesz variation}

\subsection{Characterization of the space $\bf V^{1,p}$}\label{subsec:characterization bounded variation}

The special case $\delta=1$ or in other words the set $V^{1,p}([0,T];\R^n)$ coincides with the original definition due to F. Riesz~\cite{Riesz1910} and is already fairly well-understood. For the sake of completeness we present here the full picture assuming $E=\R^n$ since it is will be general enough for the later applications concerning the solution map associated to a controlled differential equation, see Subsection~\ref{subsec:Ito map bounded variation}.\medskip 

It is well-known that the Riesz type variation space $V^{1,p}([0,T],\R^n)$ corresponds to the classical Sobolev space $W^{1,p}([0,T];\mathbb{R}^n)$, see e.g. \cite[Proposition~1.45]{Friz2010}. Let us recall the definition of the Sobolev space $W^{1,p}([0,T];\mathbb{R}^n)$ (cf. \cite[Definition~1.41]{Friz2010}). For $p\in [1,\infty]$ and $T\in (0,\infty)$ a function $f\in C([0,T];\mathbb{R}^n)$ is in $ W^{1,p}([0,T];\mathbb{R}^n)$ if and only if $f$ is of the form 
\begin{equation*}
  f_t=f_0+\int_0^t f^\pr_s \dd s,\quad t\in [0,T],
\end{equation*}
for some $f^\pr\in L^p([0,T];\mathbb{R}^n)$. Moreover, we define $\|f\|_{W^{1,p}}:=\|f^\pr\|_{L^p}$ for $f\in W^{1,p}([0,T];\mathbb{R}^n)$. \medskip 

Including the three known characterizations of $V^{1,p}([0,T],\R^n)$, we end up with the following five different ways to measure the Riesz type variation. 

\begin{lemma}\label{lem:embedding bounded variation}
  Let $T\in (0,\infty)$, $p \in (1,\infty)$ and $\mathbb{R}^n$ be equipped with the Euclidean norm $|\cdot|$. The space $ V^{1,p}([0,T];\mathbb{R}^n)$ has the following different characterizations
  \begin{equation*}
    V^{1,p}([0,T];\mathbb{R}^n)=\tilde V^{1,p}([0,T];\mathbb{R}^n)=\hat N^{1,p}([0,T];\mathbb{R}^n) = N^{1,p}([0,T];\mathbb{R}^n)= W^{1,p}([0,T];\mathbb{R}^n) 
  \end{equation*}
  with 
  \begin{equation*}
    \|f\|_{V^{1,p}}=\|f\|_{\tilde V^{1,p}}=\|f\|_{\hat N^{1,p}}=\|f\|_{W^{1,p}} = \|f\|_{N^{1,p}}\quad \text{for} \quad f\in C([0,T];\mathbb{R}^n).
  \end{equation*}
\end{lemma}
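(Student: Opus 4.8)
The plan is to establish the chain of equalities by proving a small number of two-sided inequalities and closing a cycle of inclusions. The safest organizing principle is: (i) identify $V^{1,p}=W^{1,p}$ via the classical result (e.g. \cite[Proposition~1.45]{Friz2010}), reducing everything to showing that the three remaining norms agree with one of these; (ii) show $V^{1,p}\subset\tilde V^{1,p}\subset\hat N^{1,p}$ with norm domination in each step (both are ``easy'' directions since $\frac{1}{\delta}\var=1\var$ and the $\hat{\cdot}$ operation only makes the quantity larger or equal); (iii) close the loop by showing $\hat N^{1,p}\subset N^{1,p}$ with $\|f\|_{N^{1,p}}\geq\|f\|_{\hat N^{1,p}}$, which when $\delta=1$ should in fact be an equality because $\|\cdot\|_{N^{1,p};[s,t]}^p$ \emph{is} super-additive (the integral $\int_s^{t-h}$ splits additively over subintervals once $h$ is small, and the $\sup$ over $h$ behaves well); and (iv) show $N^{1,p}\subset W^{1,p}$ with $\|f\|_{W^{1,p}}\leq\|f\|_{N^{1,p}}$, which is the standard difference-quotient characterization of Sobolev spaces: uniform boundedness of $h^{-1}\|f(\cdot+h)-f(\cdot)\|_{L^p}$ forces weak differentiability with $\|f'\|_{L^p}\leq\liminf_{h\to0}h^{-1}\|f(\cdot+h)-f(\cdot)\|_{L^p}\leq\|f\|_{N^{1,p}}$.

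First I would record the trivial half: for $f=f_0+\int_0^\cdot f'_s\,\d s\in W^{1,p}$ one has $|f_v-f_u|\leq\int_u^v|f'_s|\,\d s$, hence by Jensen $|f_v-f_u|^p/|v-u|^{p-1}\leq\int_u^v|f'_s|^p\,\d s$; summing over any partition and using additivity of the integral gives $\|f\|_{V^{1,p}}^p\leq\|f'\|_{L^p}^p=\|f\|_{W^{1,p}}^p$. The same estimate applied on each $[u,v]$ gives $\|f\|_{1\var;[u,v]}=\int_u^v|f'|\leq|v-u|^{1-1/p}(\int_u^v|f'|^p)^{1/p}$, which feeds directly into the $\tilde V^{1,p}$ bound, and likewise $|f_{u+h}-f_u|\leq\int_u^{u+h}|f'|$ controls the Nikolskii seminorm, yielding $\|f\|_{N^{1,p}}\leq\|f\|_{W^{1,p}}$ and, after the $\hat{\cdot}$ operation, $\|f\|_{\hat N^{1,p}}\leq\|f\|_{W^{1,p}}$ as well. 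So all five norms are $\leq\|f\|_{W^{1,p}}$; the content is the reverse, for which the cleanest route is the single implication $N^{1,p}\Rightarrow W^{1,p}$ together with the ordering $\|f\|_{V^{1,p}}\leq\|f\|_{\tilde V^{1,p}}\leq\|f\|_{\hat N^{1,p}}\leq\|f\|_{N^{1,p}}$ (the first two inequalities being essentially definitional once one notes $1\var=1\var$ and that summing over partitions can only grow; the third being the super-additivity collapse special to $\delta=1$).

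I expect the main obstacle to be the step $N^{1,p}\subset W^{1,p}$ with the \emph{sharp} constant, i.e. genuinely getting $\|f'\|_{L^p}\leq\|f\|_{N^{1,p}}$ and not merely a bounded multiple. This is the classical fact that bounded $L^p$ difference quotients characterize $W^{1,p}$ for $p\in(1,\infty)$ (it fails at $p=1$, which is why the hypothesis is $p\in(1,\infty)$): one extracts from $\{h^{-1}(\tau_h f-f)\}_{h>0}$, which is bounded in $L^p$ of a fixed subinterval, a weakly convergent subsequence as $h\to0$; the limit is $f'$ in the distributional sense (tested against $C_c^\infty$ and using $\int\varphi\,h^{-1}(\tau_hf-f)=-\int f\,h^{-1}(\tau_{-h}\varphi-\varphi)\to-\int f\varphi'$); weak lower semicontinuity of the norm then gives the bound with the right constant, and reflexivity of $L^p$ is exactly what makes this work for $p>1$. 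The remaining delicate bookkeeping is handling the ``at most $|v-u|$'' cutoff in the integral $\int_s^{t-h}$ versus $\int_s^t$ — i.e., checking that the supremum over $h\in(0,|t-s|]$ in $\|\cdot\|_{N^{1,p};[s,t]}$ is literally super-additive and not merely approximately so — but this is a short direct computation splitting a partition's subintervals and letting the relevant $h$ shrink, and once the Sobolev identification is in place it can also be read off from $\|f\|_{N^{1,p};[s,t]}=\|f'\|_{L^p([s,t])}$, whose $p$-th power is manifestly additive in the interval.
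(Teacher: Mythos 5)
Your proposal is correct and follows essentially the same route as the paper: everything is anchored to the Sobolev norm via the classical identification $V^{1,p}=W^{1,p}$ (same citation), the easy half is Jensen/H\"older on each subinterval, and the $\tilde V^{1,p}$ and $\hat N^{1,p}$ quantities are collapsed using additivity of $\|f'\|_{L^p([u,v])}^p$ over subintervals; the only difference is that the paper cites Leoni for the step $N^{1,p}=W^{1,p}$, which you instead prove via the standard difference-quotient/weak-compactness argument. One small caveat: the inequalities $\|f\|_{\tilde V^{1,p}}\leq\|f\|_{\hat N^{1,p}}$ and the super-additivity of $\|\cdot\|_{N^{1,p};[s,t]}^p$ are not ``essentially definitional'' --- both rest on first knowing $\|f\|_{N^{1,p};[u,v]}=\|f'\|_{L^p([u,v])}$ on subintervals --- but your fallback of reading them off from the Sobolev identification is exactly what the paper does, so the argument closes.
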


\begin{proof}
  For $f\in C([0,T];\mathbb{R}^n)$ and $p\in (1,\infty)$ the identifies
  \begin{equation*}
    \|f\|_{V^{1,p}}=\|f\|_{W^{1,p}} = \|f\|_{N^{1,p}}
  \end{equation*}
  can be found in \cite[Proposition~1.45]{Friz2010} and \cite[Theorem~10.55]{Leoni2009}.
  
  Next we observe that 
  \begin{align*}
    \|f\|_{W^{1,p}}=\|f\|_{V^{1,p}}^p
    \leq  \|f\|_{\tilde V^{1,p}}^p 
    &\leq\sup_{\mathcal{P}\subset [0,T]} \sum_{[u,v]\in \mathcal{P}} \frac{\|f\|_{W^{1,p};[u,v]}^p|v-u|^{p-1}}{|v-u|^{p-1}}
    \leq \|f\|_{W^{1,p}}^p,
  \end{align*}
  where we used \cite[Theorem~1.44]{Friz2010} (see also \cite[Theorem~1]{Friz2006}) for the second estimate and the super-additivity of $\|f\|_{W^{1,p};[u,v]}^p$ as a function in $(u,v)\in \Delta_T$ in the last one.
  
  As last step note that 
  \begin{equation*}
    \|f\|_{\hat N^{1,p}}^p=\sup_{\mathcal{P}\subset [0,T]} \sum_{[u,v]\in \mathcal{P}} \|f\|_{W^{1,p};[u,v]}^p
  \end{equation*}
  due to \cite[Theorem~10.55]{Leoni2009}, which implies
  \begin{equation*}
    \|f\|_{W^{1,p}}^p  = \|f\|_{\hat N^{1,p}}^p \leq \|f\|_{W^{1,p}}^p
  \end{equation*}
  using once more the super-additivity of $\|f\|_{W^{1,p};[u,v]}^p$ as a function in $(u,v)\in \Delta_T$. 
\end{proof}

\subsection{Characterizations of Riesz type variation}\label{subsec:characterization riesz variation}

While Sobolev spaces and Nikolskii spaces coincide with the Riesz type variation spaces for regularity $\delta=1$, this is not true anymore for the fractional regularity $\delta \in (0,1)$. However, the characterizations of Riesz type variation via $q$-variation due to Wiener~\eqref{eq:holder-variation mixed norm} and via classical Nikolskii spaces~\eqref{eq:nikolskii norm integral} still work as we will see in this subsection. \medskip 

We start by recalling the definition of fractional Sobolev spaces. For $\delta \in (0,1)$ and $p\in [1,\infty)$ the \textit{fractional Sobolev} (also called \textit{Sobolev-Slobodeckij}) regularity of a function $f\in C([0,T];E)$ is given by
\begin{equation*}
  \|f\|_{W^{\delta,p};[s,t]}:= \bigg( \iint_{[s,t]^2} \frac{ d (f_u,f_v)^{p}}{|v-u|^{1+\delta p}}\dd u \dd v\bigg)^{\frac{1}{p}}
\end{equation*}
for a subinterval $[s,t]\subset [0,T]$ and we abbreviate $\|\cdot\|_{W^{\delta,p}}:=\|\cdot\|_{W^{\delta,p};[0,T]}$. The set of all functions $f\in C([0,T];E)$ such that $\|f\|_{W^{\delta,p}}<\infty$ is denoted by $W^{\delta,p}([0,T];E)$.\medskip

As an auxiliary result we first need an explicit embedding of Nikolskii regular functions $N^{\delta^{\prime},p}([0,T];E)$ into the set of functions with fractional Sobolev regularity $W^{\delta,p}([0,T];E$).

\begin{lemma}\label{lem:embedding sobolev nikolskii}
  Suppose that $(E,d)$ is a metric space and $ T\in (0,\infty)$. Let $p\in [1,\infty)$ and $\delta, \delta^\pr \in (0,1)$ be such that $\delta^\pr >\delta$. For $f \in N^{\delta^\prime,p}([0,T];E)$ it holds 
  \begin{equation*}
    \|f\|_{W^{\delta,p};[s,t]} \leq \bigg (\frac{2}{(\delta^\pr - \delta )p}\bigg)^\frac{1}{p} \|f\|_{N^{\delta^\pr ,p};[s,t]} (t-s)^{\delta^\pr - \delta}
  \end{equation*}
  for any $s,t\in [0,T]$ with $s<t$. In particular, $N^{\delta^\prime,p}([0,T];E) \subset W^{\delta,p}([0,T];E)$.
\end{lemma}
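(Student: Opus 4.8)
The plan is to bound the fractional Sobolev double integral by a one-dimensional integral over the step size $h = v-u$, and then recognize what remains as a quantity controlled by the Nikolskii seminorm. First I would write
\[
  \|f\|_{W^{\delta,p};[s,t]}^p = \iint_{[s,t]^2} \frac{d(f_u,f_v)^p}{|v-u|^{1+\delta p}}\dd u \dd v = 2\int_{0<h\leq t-s}\frac{1}{h^{1+\delta p}}\bigg(\int_{s}^{t-h} d(f_u,f_{u+h})^p \dd u\bigg)\dd h,
\]
by symmetry in $u,v$ and the substitution $v = u+h$ (with $u$ ranging over $[s,t-h]$). The inner $u$-integral is, by the definition of $\|\cdot\|_{N^{\delta^\pr,p};[s,t]}$, bounded above by $h^{\delta^\pr p}\|f\|_{N^{\delta^\pr,p};[s,t]}^p$.

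Substituting this bound in gives
\[
  \|f\|_{W^{\delta,p};[s,t]}^p \leq 2\|f\|_{N^{\delta^\pr,p};[s,t]}^p \int_0^{t-s} h^{\delta^\pr p - \delta p - 1}\dd h = 2\|f\|_{N^{\delta^\pr,p};[s,t]}^p \cdot \frac{(t-s)^{(\delta^\pr-\delta)p}}{(\delta^\pr-\delta)p},
\]
where the integral converges precisely because $\delta^\pr > \delta$ makes the exponent $(\delta^\pr-\delta)p - 1 > -1$. Taking $p$-th roots yields exactly the claimed estimate with constant $\big(2/((\delta^\pr-\delta)p)\big)^{1/p}$, and finiteness of the right-hand side for $f \in N^{\delta^\pr,p}([0,T];E)$ gives the asserted embedding $N^{\delta^\pr,p} \subset W^{\delta,p}$.

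There is essentially no serious obstacle here; the only points requiring a little care are measurability (the map $u \mapsto d(f_u,f_{u+h})$ is continuous, so Fubini/Tonelli applies without issue since the integrand is nonnegative) and making sure the region of integration is correctly parametrized after the change of variables — the constraint $s \leq u < v \leq t$ together with $v = u+h$ forces $0 < h \leq t-s$ and $s \leq u \leq t-h$, which is exactly what appears in the definition of the Nikolskii seminorm. I would also note that the factor $2$ comes from splitting $[s,t]^2$ into the regions $u<v$ and $u>v$, which contribute equally.
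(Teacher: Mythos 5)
Your argument is correct and is essentially identical to the paper's proof: both symmetrize the double integral into $2\int_0^{t-s}\int_s^{t-h}\frac{d(f_u,f_{u+h})^p}{h^{1+\delta p}}\dd u\dd h$, bound the inner integral by $h^{\delta^\pr p}\|f\|_{N^{\delta^\pr,p};[s,t]}^p$, and integrate the remaining power of $h$ using $\delta^\pr>\delta$ to obtain the stated constant. No gaps; the measurability and parametrization remarks are fine but not needed beyond what the paper implicitly assumes.
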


\begin{proof}
  The fractional Sobolev regularity can be rewritten as
  \begin{equation*}
    \|f\|_{W^{\delta,p};[s,t]}^p = \iint_{[s,t]^2} \frac{ d (f_u,f_v)^{p}}{|v-u|^{1+\delta p}}\dd u \dd v
    = 2 \int_0^{t-s} \int_s^{t-h} \frac{ d (f_u,f_{u+h})^{p}}{|h|^{1+\delta p}}\dd u \dd h
  \end{equation*}
  for $s,t\in [0,T]$ with $s<t$ and for every $f\in W^{\delta,p}([0,T];E)$. Since $f\in N^{\delta^\prime,p}([0,T];E)$, one has
  \begin{equation*}
    \int_s^{t-h}  d (f_u,f_{u+h})^{p} \dd u \leq \|f\|_{N^{\delta^\pr,p};[s,t]}^p h^{\delta^\pr p}.
  \end{equation*}
  Therefore, we conclude for $\delta^\pr > \delta >0$ that 
  \begin{equation*}
    \|f\|_{W^{\delta,p};[s,t]}^p 
    \leq 2 \int_0^{t-s} \frac{\|f\|_{N^{\delta^\pr,p};[s,t]}^p h^{\delta^\pr p}}{|h|^{1+\delta p}} \dd h
    \leq \frac{2}{(\delta^\pr-\delta )p} \|f\|_{N^{\delta^\pr,p};[s,t]}^p (t-s)^{(\delta^\pr -\delta) p },
  \end{equation*}
  for every interval $[s,t]\subset [0,T]$, and thus $N^{\delta^\prime,p}([0,T];E) \subset W^{\delta,p}([0,T];E)$.
\end{proof}

The next proposition presents that functions of refined Nikolskii type regularity are also of finite $q$-variation and H\"older continuous. It can be seen as a refinement of \cite[Theorem~2]{Friz2006}. 

For the sake of notational brevity, we use in the following $A_{\vartheta}\lesssim B_{\vartheta}$, for a generic parameter $\vartheta$, meaning that $A_{\vartheta}\le CB_{\vartheta}$ for some constant $C>0$ independent of $\vartheta$.

\begin{proposition}\label{prop:variation embeddings}
  Suppose that $(E,d)$ is a metric space and $T\in (0,\infty)$. Let $\delta \in (0,1)$ and $p\in (1,\infty)$ be such that $\alpha := \delta - 1/p>0$, and set $q:=\frac{1}{\delta}$.  
  \begin{enumerate}
    \item If $f\in N^{\delta,p}([0,T];E)$, then $f\in C^{\alpha\hol}([0,T];E)$ and 
          \begin{equation*}
            d(f_s,f_t) \lesssim  \|f\|_{N^{\delta ,p};[s,t]}  (t-s)^{\delta -\frac{1}{p}},\quad [s,t]\subset [0,T].
          \end{equation*}
    \item The $q$-variation of any $f\in \hat N^{\delta, p}([0,T];E)$ can be estimated by 
          \begin{equation*}
            \|f\|_{q\var; [s,t]} \lesssim \|f\|_{\hat N^{\delta,p};[s,t]} (t-s)^{\alpha},\quad [s,t]\subset [0,T],
          \end{equation*}
          and one has $\hat N^{\delta,p}([0,T];E)\subset C^{\alpha\hol}([0,T];E)$ and $\hat N^{\delta,p}([0,T];E) \subset C^{q\var}([0,T];E)$.
  \end{enumerate}
\end{proposition}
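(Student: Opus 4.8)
The plan is to derive both parts from the Sobolev-type embedding into fractional Sobolev spaces (Lemma~\ref{lem:embedding sobolev nikolskii}) combined with the classical Garsia--Rodemich--Rumsey (GRR) inequality, which controls H\"older and $q$-variation norms in terms of $\|\cdot\|_{W^{\delta,p}}$ whenever $\delta p>1$. For part~(1), I would first apply Lemma~\ref{lem:embedding sobolev nikolskii} on the interval $[s,t]$ with some auxiliary exponent $\delta^\pr\in(\delta,1)$ --- but this loses regularity, so instead I would directly estimate $\|f\|_{W^{\beta,p};[s,t]}$ for $\beta<\delta$ with $\beta p>1$ by the same computation as in the proof of Lemma~\ref{lem:embedding sobolev nikolskii} (writing the double integral as $2\int_0^{t-s}\int_s^{t-h}\cdots\,\d u\,\d h$ and using the defining bound $\int_s^{t-h}d(f_u,f_{u+h})^p\,\d u\le \|f\|_{N^{\delta,p};[s,t]}^p h^{\delta p}$), yielding $\|f\|_{W^{\beta,p};[s,t]}\lesssim \|f\|_{N^{\delta,p};[s,t]}(t-s)^{\delta-\beta}$. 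Then GRR gives $d(f_s,f_t)\lesssim \|f\|_{W^{\beta,p};[s,t]}(t-s)^{\beta-1/p}\lesssim \|f\|_{N^{\delta,p};[s,t]}(t-s)^{\delta-1/p}$, with a constant depending only on $\delta,p$ (via a fixed choice such as $\beta=\delta-\alpha/2$). Taking the sup over $[s,t]\subset[0,T]$ with $t-s$ bounded gives $f\in C^{\alpha\hol}$.

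For part~(2), the key structural point is that $\|f\|_{\hat N^{\delta,p};[s,t]}^p$ is \emph{super-additive} in $(s,t)$, by its very definition (supremum of sums over partitions), whereas $\|f\|_{N^{\delta,p}}^p$ is not. So I would fix a partition $\mathcal{P}$ of $[s,t]$ and estimate $\sum_{[u,v]\in\mathcal{P}}d(f_u,f_v)^q$. On each piece $[u,v]$, part~(1) applied on $[u,v]$ gives $d(f_u,f_v)^q\lesssim \|f\|_{N^{\delta,p};[u,v]}^q(v-u)^{(\delta-1/p)q} = \|f\|_{N^{\delta,p};[u,v]}^q(v-u)^{\alpha q}$. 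Now I would use H\"older's inequality in the pairing $(q/p,\;\text{conjugate})$ to separate the factors: since $q=1/\delta$ and $q\le p$, with $\alpha q = (\delta-1/p)/\delta = 1 - 1/(\delta p) = 1 - q/p$, one gets
\begin{equation*}
  \sum_{[u,v]\in\mathcal{P}}\|f\|_{N^{\delta,p};[u,v]}^q(v-u)^{1-q/p}
  \le \Big(\sum_{[u,v]\in\mathcal{P}}\|f\|_{N^{\delta,p};[u,v]}^p\Big)^{q/p}\Big(\sum_{[u,v]\in\mathcal{P}}(v-u)\Big)^{1-q/p}
  \le \|f\|_{\hat N^{\delta,p};[s,t]}^q\,(t-s)^{1-q/p},
\end{equation*}
where the first inequality is H\"older with exponents $p/q$ and $p/(p-q)$, and the second uses the definition of $\|\cdot\|_{\hat N^{\delta,p};[s,t]}$ together with $\sum(v-u)=t-s$. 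Taking the supremum over $\mathcal{P}$ and the $q$-th root yields $\|f\|_{q\var;[s,t]}\lesssim \|f\|_{\hat N^{\delta,p};[s,t]}(t-s)^{(1-q/p)/q}=\|f\|_{\hat N^{\delta,p};[s,t]}(t-s)^{\alpha}$, which is the claimed bound; the two stated embeddings then follow since $\|f\|_{N^{\delta,p};[u,v]}\le\|f\|_{\hat N^{\delta,p};[u,v]}\le\|f\|_{\hat N^{\delta,p}}$ gives $\alpha$-H\"older regularity from part~(1), and finiteness of $\|f\|_{q\var;[0,T]}$ from the displayed estimate on $[0,T]$.

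The main obstacle I anticipate is organizing the constants so they genuinely depend only on $\delta$ and $p$ (hence are harmless for the $\lesssim$ notation), in particular pinning down a concrete auxiliary exponent $\beta$ in the GRR step and checking that the GRR constant blows up only as $\beta p\downarrow 1$ --- which is why one wants $\beta$ bounded away from $1/p$, e.g. $\beta=\delta-\alpha/2$ so that $\beta-1/p=\alpha/2>0$. A secondary subtlety is that $\|f\|_{\hat N^{\delta,p};[s,t]}$ is a sup over partitions of \emph{sub}intervals, so one must be careful that the partition $\mathcal{P}$ of $[s,t]$ used in part~(2) is exactly of the right type; but this is immediate from the definitions. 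Everything else is a routine combination of Lemma~\ref{lem:embedding sobolev nikolskii}, the GRR inequality, and H\"older's inequality.
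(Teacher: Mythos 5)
Your proposal is correct and follows essentially the same route as the paper: part (1) is exactly the Nikolskii-to-fractional-Sobolev bound of Lemma~\ref{lem:embedding sobolev nikolskii} (with an auxiliary exponent strictly between $1/p$ and $\delta$) combined with Garsia--Rodemich--Rumsey, and part (2) combines the resulting local estimate with super-additivity of $\|\cdot\|_{\hat N^{\delta,p}}^p$. The only cosmetic difference is that where the paper invokes Remark~\ref{rmk:supper additive} for the control $\omega(s,t)=\|f\|_{\hat N^{\delta,p};[s,t]}^{q}(t-s)^{\alpha q}$, you inline the same fact by applying H\"older's inequality with exponents $p/q$ and $p/(p-q)$ on a fixed partition.
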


\begin{proof}
  (1) Choose $\gamma < \delta$ such that $\gamma - 1/p>0$. Because $f\in N^{\delta,p}([0,T];E)$, Lemma~\ref{lem:embedding sobolev nikolskii} yields $f\in W^{\gamma,p}([0,T];E)$ and we have 
  \begin{equation*}
    \|f\|_{W^{\gamma,p};[s,t]}^p = F_{s,t} :=\iint_{[s,t]^2} \bigg(\frac{d(f_u,f_v)}{|v-u|^{1/p+\gamma}} \bigg)^p\dd u \dd v, \quad [s,t]\subset [0,T].
  \end{equation*}
  Applying the Garsia-Rodemich-Rumsey inequality with $\Psi(\cdot) =(\cdot)^p$ and $p(\cdot)=(\cdot)^{1/p+\gamma}$ gives
  \begin{align*}
    d(f_s,f_t)\leq 8 \int_0^{t-s}\bigg( \frac{F_{s,t}}{u^2} \bigg)^{\frac{1}{p}}\dd p(u) = \frac{8}{(\gamma-1/p)} \|f\|_{W^{\gamma,p};[s,t]} (t-s)^{\gamma-\frac{1}{p}},
  \end{align*}
  using $\gamma-\frac{1}{p}>0$, see for instance \cite[Theorem~A.1]{Friz2010} for a version of the Garsia-Rodemich-Rumsey lemma suitable for functions with values in a metric space. Furthermore, Lemma~\ref{lem:embedding sobolev nikolskii} yields
  \begin{align}\label{eq:holder estimate proof}
  \begin{split}
    d(f_s,f_t)
    &\leq \frac{8}{(\gamma-1/p)} \bigg (\frac{2}{(\delta - \gamma )p}\bigg)^\frac{1}{p} \|f\|_{N^{\delta ,p};[s,t]} (t-s)^{\delta - \gamma} (t-s)^{\gamma-\frac{1}{p}}\\
    &\lesssim  \|f\|_{N^{\delta ,p};[s,t]}  (t-s)^{\delta -\frac{1}{p}},
  \end{split}
  \end{align}
  which gives $f\in C^{(\delta-1/p)\hol}([0,T];E)$. 
  
  (2) Assuming $f\in \hat N^{\delta,p}([0,T];E)$ the estimate~\eqref{eq:holder estimate proof} leads to 
  \begin{equation*}
    d(f_s,f_t)\lesssim  \|f\|_{\hat N^{\delta ,p};[s,t]}  (t-s)^{\delta -\frac{1}{p}},  \quad [s,t]\subset [0,T].
  \end{equation*}
  Recalling $\alpha = \delta - 1/p>0$ and $q=\frac{1}{\delta}$, we note that 
  \begin{equation*}
    \omega (s,t):= \|f\|^{q}_{\hat N^{\delta,p};[s,t]} (t-s)^{\alpha q},\quad 0\leq s\leq t\leq T,
  \end{equation*}
  is super-additive. Indeed, since $\|f\|^{p}_{\hat N^{\delta,p};[s,t]}$ and $|t-s|$ are super-additive as functions in $(s,t)\in \Delta_T$ and $q/p+1-1/\delta p \geq  1$, Remark~\ref{rmk:supper additive} ensures the super-additivity of $\omega$.
   
  Hence, by the super-additivity of $\omega$ we deduce that 
  \begin{equation*}
    \|f\|_{q\text{-var};[s,t]}^q \leq C \omega (s,t)= C \|f\|^{q}_{\hat N^{\delta,p};[s,t]}|t-s|^{\alpha q},
  \end{equation*}
  for some constant $C>0$ depending only on $\delta$ and $p$.
  
  In particular, we have proven that $\hat N^{\delta,p}([0,T];E)\subset C^{\alpha\hol}([0,T];E)$ and $\hat N^{\delta,p} ([0,T];E)\subset C^{q\var}([0,T];E)$.
\end{proof}

\begin{remark}
  Proposition~\ref{prop:variation embeddings}~(2) does not hold for $\|\cdot\|_{\hat N^{\delta,p};[s,t]}$ replaced by $\|\cdot\|_{N^{\delta,p};[s,t]}$, see Remark~\ref{rmk:counterexample} below.
\end{remark}

\begin{remark} 
  Alternatively to the given proofs of Lemma~\ref{lem:embedding sobolev nikolskii} and Proposition~\ref{prop:variation embeddings}, one could use the abstract Kuratowski embedding to extend the known Besov embeddings from Banach spaces to general metric spaces and then proceed further as presented above. For example note, if $(E,\|\cdot \|)$ is a Banach space, then classical Besov embeddings lead to 
  \begin{equation*}
    \|f_t-f_s\| \leq \sup_{|t-s|\geq h >0 } \bigg( \frac{\|f_{s+h}-f_s\|}{|h|^{\delta -1/p}} \bigg) |t-s|^{\delta -1/p}\leq C \|f\|_{N^{\delta,p};[s,t]} |t-s|^{\delta -1/p},
  \end{equation*}
  for every $f\in N^{\delta,p}([0,T];E)$, $\delta \in (0,1)$, $p\in (1,\infty)$ such that $\delta >1/p$, and some constant $C>0$,  cf. ~\cite[Theorem~10]{Simon1990}.  However, we preferred here to give direct proofs. 
  
  On the other hand, the embedding $N^{\delta, p}([0,T];E)\subset  W^{\delta,p}([0,T];E)$ does not hold true, which prevents to deduce Proposition~\ref{prop:variation embeddings} as a corollary of \cite[Theorem~2]{Friz2006}. Hence, the elaborated embedding of Lemma~\ref{lem:embedding sobolev nikolskii} is essential to obtain Proposition~\ref{prop:variation embeddings}.
\end{remark}

The next theorem is the main result of the first part: the characterization of Riesz type variation via $\|\cdot \|_{\tilde V^{\delta,p}}$ and $\|\cdot \|_{\hat N^{\delta,p}}$. 

\begin{theorem}\label{thm:riesz characterization}
  Let $T\in (0,\infty)$ and $(E,d)$ be a metric space. Suppose that $\delta \in (0,1)$ and $p \in (1,\infty)$ such that $\delta > 1/p$. Then, $\|\cdot\|_{V^{\delta,p}}$, $\|\cdot\|_{\tilde V^{\delta,p}}$ and $\|\cdot\|_{\hat{N}^{\delta,p}}$ are equivalent, that is 
  \begin{equation*}
    \|f\|_{ V^{\delta,p}} \lesssim \|f\|_{\tilde V^{\delta,p}} \lesssim \|f\|_{\hat N^{\delta,p}} \lesssim \|f\|_{ V^{\delta,p}}
  \end{equation*}
  for every function $f\in C([0,T];E)$, and thus 
  \begin{equation*}
    V^{\delta,p}([0,T];E)= \tilde V^{\delta,p}([0,T];E)= \hat N^{\delta,p}([0,T];E).
  \end{equation*}
\end{theorem}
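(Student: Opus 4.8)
The plan is to establish the chain of inequalities $\|f\|_{V^{\delta,p}} \lesssim \|f\|_{\tilde V^{\delta,p}} \lesssim \|f\|_{\hat N^{\delta,p}} \lesssim \|f\|_{V^{\delta,p}}$, working one link at a time. The first link is essentially trivial from the definitions: for any partition $\mathcal{P}$ of $[0,T]$ and any $[u,v]\in\mathcal{P}$ we have $d(f_u,f_v)\leq \|f\|_{1/\delta\var;[u,v]}$, so comparing \eqref{eq:Riesz variation} with \eqref{eq:holder-variation mixed norm} term by term gives $\|f\|_{V^{\delta,p};[s,t]}\leq \|f\|_{\tilde V^{\delta,p};[s,t]}$ with constant $1$ (and analogously for $p=\infty$).

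For the third link $\|f\|_{\hat N^{\delta,p}}\lesssim\|f\|_{V^{\delta,p}}$, the idea is to estimate the Nikolskii norm on a subinterval by the Riesz norm on that subinterval and then exploit super-additivity. Concretely, using Proposition~\ref{prop:riesz interpolation} (the bound $\|f\|_{1/\delta\var;[u,v]}\leq \|f\|_{V^{\delta,p};[u,v]}|v-u|^{\delta-1/p}$), one shows that for any $[u,v]$ and any $h\in(0,v-u]$,
\begin{equation*}
  \int_u^{v-h} d(f_w,f_{w+h})^p \dd w \leq \int_u^{v-h}\|f\|_{1/\delta\var;[w,w+h]}^p\dd w \leq \|f\|_{V^{\delta,p};[u,v]}^p\, h^{\delta p -1}\int_u^{v-h}\dd w \leq \|f\|_{V^{\delta,p};[u,v]}^p\, h^{\delta p},
\end{equation*}
so $\|f\|_{N^{\delta,p};[u,v]}\leq \|f\|_{V^{\delta,p};[u,v]}$. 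Hence for any partition $\mathcal{P}$ of $[0,T]$, $\sum_{[u,v]\in\mathcal{P}}\|f\|_{N^{\delta,p};[u,v]}^p \leq \sum_{[u,v]\in\mathcal{P}}\|f\|_{V^{\delta,p};[u,v]}^p$, and the right-hand side is bounded by $\|f\|_{V^{\delta,p};[0,T]}^p$ because $\|f\|_{V^{\delta,p};\cdot}^p$ is super-additive (refining a partition of $[0,T]$ only increases the Riesz sum, so summing Riesz quantities over a sub-partition is dominated by the full supremum). Taking the supremum over $\mathcal{P}$ gives $\|f\|_{\hat N^{\delta,p}}\leq \|f\|_{V^{\delta,p}}$, in fact with constant $1$.

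The middle link $\|f\|_{\tilde V^{\delta,p}}\lesssim\|f\|_{\hat N^{\delta,p}}$ is the substantive one and is where Proposition~\ref{prop:variation embeddings}(2) does the work. Fix a partition $\mathcal{P}=\{[u,v]\}$ of $[0,T]$. On each block $[u,v]$, apply Proposition~\ref{prop:variation embeddings}(2) to get $\|f\|_{q\var;[u,v]}\lesssim \|f\|_{\hat N^{\delta,p};[u,v]}(v-u)^{\alpha}$ with $\alpha=\delta-1/p$ and $q=1/\delta$; raising to the $p$-th power and dividing by $|v-u|^{\delta p -1} = |v-u|^{\alpha p}$ yields
\begin{equation*}
  \frac{\|f\|_{q\var;[u,v]}^p}{|v-u|^{\delta p-1}} \lesssim \|f\|_{\hat N^{\delta,p};[u,v]}^p.
\end{equation*}
Summing over $[u,v]\in\mathcal{P}$ and using that $\|f\|_{\hat N^{\delta,p};\cdot}^p$ is super-additive (noted in the excerpt, directly from its definition as a supremum of partition sums) bounds the sum by $C\|f\|_{\hat N^{\delta,p};[0,T]}^p$; taking the supremum over $\mathcal{P}$ gives $\|f\|_{\tilde V^{\delta,p}}\lesssim \|f\|_{\hat N^{\delta,p}}$. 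One should also dispatch the $p=\infty$ case separately, but there $\|f\|_{\tilde V^{\delta,\infty};[u,v]}$ reduces to a sup of $\|f\|_{1/\delta\var}/|v-u|^\delta$ and the argument is analogous (and indeed simpler) using Proposition~\ref{prop:variation embeddings}(1)-type Hölder bounds; since the theorem is stated for $p\in(1,\infty)$ this may be omitted. The combination of the three links gives equivalence of the three norms and hence equality of the three spaces. The main obstacle is the middle link: it genuinely requires the Garsia--Rodemich--Rumsey-based embedding encapsulated in Proposition~\ref{prop:variation embeddings}(2), and one must be careful that the constant there is uniform across all subintervals $[u,v]$ (it depends only on $\delta,p$), so that super-additivity can be applied cleanly after pulling the constant out of the sum.
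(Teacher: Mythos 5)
Your overall architecture is fine and the first two links are handled exactly as in the paper: $\|f\|_{V^{\delta,p}}\leq\|f\|_{\tilde V^{\delta,p}}$ is immediate from $d(f_u,f_v)\leq\|f\|_{1/\delta\var;[u,v]}$, and the middle link $\|f\|_{\tilde V^{\delta,p}}\lesssim\|f\|_{\hat N^{\delta,p}}$ via Proposition~\ref{prop:variation embeddings}(2) together with the super-additivity of $\|f\|_{\hat N^{\delta,p};\cdot}^p$ is precisely the paper's argument. The genuine gap is in your third link. In the chain
\begin{equation*}
  \int_u^{v-h}\|f\|_{1/\delta\var;[w,w+h]}^p\dd w \leq \|f\|_{V^{\delta,p};[u,v]}^p\, h^{\delta p -1}\int_u^{v-h}\dd w \leq \|f\|_{V^{\delta,p};[u,v]}^p\, h^{\delta p}
\end{equation*}
the last step asserts $\int_u^{v-h}\dd w = v-u-h\leq h$, which fails whenever $h<(v-u)/2$. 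What your pointwise bound actually yields is $\|f\|_{N^{\delta,p};[u,v]}^p\leq\|f\|_{V^{\delta,p};[u,v]}^p\,\sup_{h}\frac{v-u-h}{h}$, and the supremum diverges as $h\to 0$; so as written the estimate $\|f\|_{N^{\delta,p};[u,v]}\leq\|f\|_{V^{\delta,p};[u,v]}$ (let alone with constant $1$) is not established, and the whole link $\|f\|_{\hat N^{\delta,p}}\lesssim\|f\|_{V^{\delta,p}}$ is left open.

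The conclusion of that link is nevertheless true, and the repair is exactly the blocking device the paper uses (there for $\hat N\lesssim\tilde V$, combined with $\tilde V\lesssim V$ from Proposition~\ref{prop:riesz interpolation}): do not replace the localized norm $\|f\|_{V^{\delta,p};[w,w+h]}$ by the global one under the integral. Instead, for fixed $h$ partition $[u,v-h]$ into intervals $[t_i,t_{i+1}]$ of length (at most) $h$; for $w\in[t_i,t_{i+1}]$ one has $[w,w+h]\subset[t_i,t_{i+2}]$ and hence $d(f_w,f_{w+h})^p\leq h^{\delta p-1}\|f\|_{V^{\delta,p};[t_i,t_{i+2}]}^p$, so that
\begin{equation*}
  \int_u^{v-h} d(f_w,f_{w+h})^p\dd w \leq h^{\delta p}\sum_i\|f\|_{V^{\delta,p};[t_i,t_{i+2}]}^p \leq 2\,h^{\delta p}\,\|f\|_{V^{\delta,p};[u,v]}^p,
\end{equation*}
where the last step splits the overlapping windows $[t_i,t_{i+2}]$ into the even- and odd-indexed families and applies the super-additivity of $\|f\|_{V^{\delta,p};\cdot}^p$ to each. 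This gives $\|f\|_{N^{\delta,p};[u,v]}\leq 2^{1/p}\|f\|_{V^{\delta,p};[u,v]}$, after which your summation over partitions and a final use of super-additivity do give $\|f\|_{\hat N^{\delta,p}}\lesssim\|f\|_{V^{\delta,p}}$. In short: the compensating factor must come from super-additivity over interlaced $O(h)$-windows, not from the (false) bound on the measure of the integration domain; this is the one substantive point where your proposal deviates from, and falls short of, the paper's proof.
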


\begin{proof}
  For a function $f\in C([0,T];E)$ and an interval $[s,t]\subset [0,T]$ recall that 
  \begin{equation*}
    \|f\|_{N^{\delta,p};[s,t]}= \bigg( \sup_{|t-s| \geq h >0}h^{-\delta p}\int_s^{t-h} d(f_{u},f_{u+h})^{p}\dd u \bigg)^{\frac{1}{p}}.  
  \end{equation*}
  Let us fix $h\in (0,t-s]$ and take a partition $\mathcal{P}(h):=\{[t_i,t_{i+1}]\,:\, s=t_0<\dots<t_M=t-h\}$ such that
  \begin{equation*}
    |t_{M}-t_{M-1}|\leq h\quad \text{and} \quad |t_{i+1}-t_i|=h \quad \text{for } i=0,\dots, M-2, \quad M\in \mathbb{N}.     
  \end{equation*}
  Since $\sup_{u \in [t_i,t_{i+1}]} d(f_u,f_{u+h})^p\leq \|f\|^p_{1/\delta\var;[t_i,t_{i+2}]}$ for $ i=0,\dots, M-1$ with $t_{M+1}:=t-h$, we observe that 
  \begin{align*}
    \int_s^{t-h} |d(f_{u},f_{u+h})|^{p}\dd u &= \sum_{i=0}^{M-1} \int_{t_i }^{t_{i+1}} d(f_{u},f_{u+h})^p \dd u
    \leq  \sum_{i=0}^{M-1} \sup_{u \in [t_i,t_{i+1}]} d(f_{u},f_{u+h})^p (t_{i+1}-t_i) \\
    &\leq \frac{1}{2}(2h)^{\delta p} \sum_{i=0}^{M-1} \frac{\|f\|^p_{1/\delta\var;[t_i,t_{i+2}]}}{(2h)^{\delta p-1}}\lesssim h^{\delta p} \|f\|_{\tilde V^{\delta,p};[s,t]}^p,
  \end{align*}
  which implies $\|f\|_{N^{\delta,p};[s,t]}^p\lesssim\|f\|_{\tilde V^{\delta,p};[s,t]}^p$. Therefore, the super-additivity of $\|f\|_{\tilde V^{\delta,p};[s,t]}^p$ as function in $(s,t)\in \Delta_T$ reveals 
  \begin{equation*}
    \|f\|_{\hat N^{\delta,p}} \lesssim \|f\|_{\tilde V^{\delta,p}}.
  \end{equation*} 

  For the converse inequality Proposition~\ref{prop:variation embeddings} gives  
  \begin{equation*}
    \|f\|_{\frac{1}{\delta}\text{-}\mathrm{var};[u,v]} \lesssim \|f\|_{\hat N^{\delta,p};[u,v]} |v-u|^{\delta - \frac{1}{p}}, \quad 0\leq u< v\leq T,
  \end{equation*}
  for $\delta \in (0,1)$ and $p \in (1,\infty)$ such that $\delta > 1/p$, which leads to 
  \begin{equation*}
    \|h\|_{\tilde{V}^{\delta,p}}^p = \sup_{\mathcal{P}\subset [0,T]} \sum_{[u,v]\in \mathcal{P}} \frac{\|h\|_{\frac{1}{\delta}\text{-}\mathrm{var};[u,v]}^p}{|v-u|^{\delta p- 1}}
    \leq \sup_{\mathcal{P}\subset [0,T]} \sum_{[u,v]\in \mathcal{P}} \frac{\|h\|_{\hat N^{\delta,p};[u,v]}^p|v-u|^{\delta p-1}}{|v-u|^{\delta p-1}}\leq \|h\|_{\hat N^{\delta,p}}^p,
  \end{equation*}
  where we applied the super-additivity of $\|f\|_{\hat N^{\delta;p};[s,t]}^p$ as a function in $(s,t)\in \Delta_T$.
  
  It remains to show
  \begin{equation*}
    \|f\|_{ V^{\delta,p}} \lesssim \|f\|_{\tilde V^{\delta,p}} \lesssim  \|f\|_{ V^{\delta,p}},\quad f\in C([0,T],E).
  \end{equation*}
  The first inequality follows immediately from the definitions and the observation 
  \begin{equation*}
    d(f_u,f_v)^p\leq \|f\|_{1/\delta\var;[u,v]}^p,\quad [u,v]\subset [0,T].
  \end{equation*}
  The second inequality can be deduced from Proposition~\ref{prop:riesz interpolation}, which gives the estimate 
  \begin{equation*}
    \|f\|_{1/\delta\var;[u,v]}^p \leq  \|f\|^p_{V^{\delta,p};[u,v]} |v-u|^{\delta p-1},
  \end{equation*} 
  and the super-additivity of $\|f\|_{V^{\delta;p};[s,t]}^p$ as a function in $(s,t)\in \Delta_T$.
\end{proof}

As a next step we briefly want to understand how the set $V^{\delta,p}([0,T];E)$ of functions with finite Riesz type variation are related to other types of measuring the regularity of functions. The characterization of Riesz type variation in terms of Nikolskii regularity allows to deduce the following result connecting the set $V^{\delta,p}([0,T];E)$ with the notion of classical fractional Sobolev and Nikolskii regularity.

\begin{corollary}\label{cor:212}
  Let $T\in (0,\infty)$ and $(E,d)$ be metric space. If $\delta \in (0,1)$ and $p \in (1,\infty)$ such that $\delta > 1/p$, then one has the inclusions 
  \begin{equation}\label{eq:embedding riesz}
    W^{\delta,p}([0,T];E)\subset V^{\delta,p}([0,T];E)\subset N^{\delta,p}([0,T];E)
  \end{equation}
  and 
  \begin{equation*}
    N^{\delta+\epsilon,p}([0,T];E)\subset \hat N^{\delta,p}([0,T];E)\subset N^{\delta,p}([0,T];E)
  \end{equation*}
  for $\epsilon\in (0,1-\delta)$.
\end{corollary}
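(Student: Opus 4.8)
The plan is to establish the four claimed inclusions one at a time, using the characterization of Riesz type variation from Theorem~\ref{thm:riesz characterization} together with the Nikolskii--Sobolev embedding of Lemma~\ref{lem:embedding sobolev nikolskii} and the elementary embedding properties of $\hat N^{\delta,p}$ already recorded. First I would dispose of the second chain of inclusions, since it is essentially a warm-up for the Nikolskii side. For the right-hand inclusion $\hat N^{\delta,p}\subset N^{\delta,p}$, note that by definition $\|f\|_{N^{\delta,p};[0,T]}^p$ is obtained from a single interval, while $\|f\|_{\hat N^{\delta,p};[0,T]}^p$ is a supremum over partitions of sums of the same quantity on subintervals; taking the trivial partition $\{[0,T]\}$ shows $\|f\|_{N^{\delta,p}}\le\|f\|_{\hat N^{\delta,p}}$. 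For the left-hand inclusion $N^{\delta+\epsilon,p}\subset\hat N^{\delta,p}$, I would use that on any subinterval $[u,v]$ one has $\|f\|_{N^{\delta,p};[u,v]}\le |v-u|^{\epsilon}\|f\|_{N^{\delta+\epsilon,p};[u,v]}$ (directly from the definition, since $h^{-\delta}=h^{-(\delta+\epsilon)}h^{\epsilon}\le h^{-(\delta+\epsilon)}|v-u|^{\epsilon}$ for $h\le v-u$) and then $\|f\|_{N^{\delta+\epsilon,p};[u,v]}^p\le\|f\|_{N^{\delta+\epsilon,p};[0,T]}^p$; summing over a partition and using $\sum_{[u,v]\in\mathcal P}|v-u|^{\epsilon p}\le T^{\epsilon p}$ (or rather that $|v-u|^{\epsilon p}$ is dominated by a super-additive control, so the sum is bounded by $T^{\epsilon p}$) yields finiteness of $\|f\|_{\hat N^{\delta,p}}$.

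Next I would turn to the first chain \eqref{eq:embedding riesz}. The inclusion $V^{\delta,p}\subset N^{\delta,p}$ is immediate by combining Theorem~\ref{thm:riesz characterization} ($V^{\delta,p}=\hat N^{\delta,p}$) with the inclusion $\hat N^{\delta,p}\subset N^{\delta,p}$ just proved; alternatively one can cite the estimate $\|f\|_{N^{\delta,p};[s,t]}\lesssim\|f\|_{\tilde V^{\delta,p};[s,t]}$ from the proof of Theorem~\ref{thm:riesz characterization} together with $V^{\delta,p}=\tilde V^{\delta,p}$. For the remaining inclusion $W^{\delta,p}\subset V^{\delta,p}$, I would go through $\hat N^{\delta,p}$ again, i.e.\ it suffices to show $W^{\delta,p}\subset\hat N^{\delta,p}$. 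Here the natural route is: on a subinterval $[u,v]$ bound $\|f\|_{N^{\delta,p};[u,v]}$ in terms of the fractional Sobolev seminorm on a slightly larger scale, or — more cleanly — observe that $\|f\|_{W^{\delta,p};[s,t]}^p$ is itself super-additive as a function of $(s,t)$ (being an integral of a positive density over $[s,t]^2$, one has $\|f\|_{W^{\delta,p};[s,u]}^p+\|f\|_{W^{\delta,p};[u,t]}^p\le\|f\|_{W^{\delta,p};[s,t]}^p$), so it is enough to prove the pointwise bound $\|f\|_{N^{\delta,p};[u,v]}^p\lesssim\|f\|_{W^{\delta,p};[u,v]}^p$ and then sum. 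That pointwise bound is the content of a one-line computation: $\int_u^{v-h}d(f_w,f_{w+h})^p\dd w\le h^{1+\delta p}\sup_h h^{-1-\delta p}\int\cdots$, and integrating the double integral in the Sobolev seminorm over the slice of width-$h$ differences recovers $h^{\delta p}\|f\|_{W^{\delta,p};[u,v]}^p$ up to a constant, i.e.\ the standard $W^{\delta,p}\subset N^{\delta,p}$ embedding localized to $[u,v]$.

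The main obstacle I anticipate is the $W^{\delta,p}\subset N^{\delta,p}$ step, specifically getting the localization right so that the constant is uniform in the subinterval and so that super-additivity can legitimately be invoked afterwards: one must be careful that $\|f\|_{N^{\delta,p};[u,v]}$ involves a supremum over $h\in(0,v-u]$ and that for small $h$ the integral over $[u,v-h]$ is genuinely controlled by the double integral over $[u,v]^2$ restricted to $|w'-w|=h$, which requires writing the $W^{\delta,p}$ double integral in the $(w,h)$ coordinates (as is done in the proof of Lemma~\ref{lem:embedding sobolev nikolskii}) and then using that $\int_0^{v-u}h^{-1-\delta p}\cdot h^{\delta p}\dd h$ would diverge — so instead one fixes $h$, divides by $h^{\delta p}$ on the right and takes the supremum, giving $h^{-\delta p}\int_u^{v-h}d(f_w,f_{w+h})^p\dd w\le h^{-\delta p}\cdot h\cdot(\text{average of }|h'|^{1+\delta p}\text{-weighted differences})$, which is not quite dimensionally matched. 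The clean fix, which I would adopt, is to avoid $W^{\delta,p}$ entirely for the left inclusion of \eqref{eq:embedding riesz} by instead invoking the already-proved $N^{\delta+\epsilon,p}\subset\hat N^{\delta,p}=V^{\delta,p}$ together with the classical chain $W^{\delta+\epsilon,p}\hookrightarrow W^{\delta,p}$ is the wrong direction; so one really does need the honest localized $W^{\delta,p}\subset N^{\delta,p}$ embedding, which is standard (e.g.\ \cite{Simon1990}) and whose localization to $[u,v]$ with constant independent of $[u,v]$ is routine once one uses the super-additivity of the Sobolev double integral. Everything else is bookkeeping with the definitions and the super-additivity arguments already used repeatedly above.
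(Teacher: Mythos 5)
Your argument for the inclusion $N^{\delta+\epsilon,p}\subset\hat N^{\delta,p}$ has a genuine gap. The local bound $\|f\|_{N^{\delta,p};[u,v]}\le |v-u|^{\epsilon}\|f\|_{N^{\delta+\epsilon,p};[u,v]}$ is fine, but after bounding $\|f\|_{N^{\delta+\epsilon,p};[u,v]}\le\|f\|_{N^{\delta+\epsilon,p};[0,T]}$ you still need $\sup_{\mathcal{P}}\sum_{[u,v]\in\mathcal{P}}|v-u|^{\epsilon p}<\infty$, and this fails whenever $\epsilon p<1$ --- a case that is not excluded, since $\epsilon\in(0,1-\delta)$ may be arbitrarily small: for the partition of $[0,T]$ into $N$ equal intervals the sum equals $N^{1-\epsilon p}T^{\epsilon p}\to\infty$. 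In that regime $x\mapsto x^{\epsilon p}$ is sub-additive, not super-additive, so the claim that $|v-u|^{\epsilon p}$ is ``dominated by a super-additive control'' with total mass $T^{\epsilon p}$ is false. You also cannot repair this by keeping $\|f\|_{N^{\delta+\epsilon,p};[u,v]}^p$ localized and summing it instead, because the $p$-th power of the Nikolskii seminorm is precisely not super-additive in the interval --- this failure is the very reason the space $\hat N^{\delta,p}$ is introduced (cf.\ the remark following \eqref{eq:nikolskii norm integral} and Remark~\ref{rmk:counterexample}).

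The fix is the paper's route, for which you already have all the ingredients: first $N^{\delta+\epsilon,p}([0,T];E)\subset W^{\delta,p}([0,T];E)$ by Lemma~\ref{lem:embedding sobolev nikolskii}, and then $W^{\delta,p}\subset \hat N^{\delta,p}=V^{\delta,p}$ exactly as in your treatment of \eqref{eq:embedding riesz}: the localized estimate $\|f\|_{N^{\delta,p};[u,v]}\lesssim\|f\|_{W^{\delta,p};[u,v]}$ (Simon \cite{Simon1990}, Theorem~11, with constant depending only on $\delta,p$, and transferred to metric-space-valued paths via Kuratowski's embedding --- a point worth making explicit since $E$ is only a metric space) combined with the genuine super-additivity of $(u,v)\mapsto\|f\|_{W^{\delta,p};[u,v]}^p$ as a double integral. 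The structural difference from your shortcut is that the interval-dependent intermediate quantity is then a super-additive control rather than the bare power $|v-u|^{\epsilon p}$, which is not summable over fine partitions when $\epsilon p<1$. Your remaining three inclusions --- $\hat N^{\delta,p}\subset N^{\delta,p}$ via the trivial partition, $V^{\delta,p}\subset N^{\delta,p}$ via Theorem~\ref{thm:riesz characterization}, and $W^{\delta,p}\subset V^{\delta,p}$ via the localized Simon estimate plus super-additivity (after you rightly abandon the ``one-line'' slice computation, which cannot work because the Sobolev double integral does not control single-$h$ slices) --- coincide with the paper's proof.
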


\begin{proof}
  For the first embedding let $f\in W^{\delta,p}([0,T];E)$. Applying Theorem~\ref{thm:riesz characterization} and \cite[Theorem~11]{Simon1990}, which can be extended to general metric spaces by Kuratowski's embedding theorem, we get
  \begin{equation*}
    \|f\|_{V^{\delta,p}}^p\lesssim \|f\|_{\hat N^{\delta,p}}^p
    = \sup_{\mathcal{P}\subset [0,T]} \sum_{[s,t]\in \mathcal{P}}\|f\|_{N^{\delta,p};[s,t]}^p 
    \lesssim \sup_{\mathcal{P}\subset [0,T]} \sum_{[s,t]\in \mathcal{P}}\|f\|_{W^{\delta,p};[s,t]}^p 
    \leq \|f\|_{W^{\delta,p}}^p.
  \end{equation*}
  
  For the second embedding let $f\in N^{\delta,p}([0,T];E)$ and we apply again Theorem~\ref{thm:riesz characterization} to obtain 
  \begin{equation*}
    \|f\|_{N^{\delta,p}}^p\leq \|f\|_{\hat N^{\delta,p}}^p \lesssim \|f\|_{V^{\delta,p}}^p.
  \end{equation*}
  
  The first embedding for the refined Nikolskii type space $\hat N^{\delta+\epsilon,p}$ is a consequence of Theorem~\ref{thm:riesz characterization} and the embedding 
  \begin{equation*}
     N^{\delta+\epsilon,p}([0,T];E) \subset W^{\delta,p}([0,T];E) \subset V^{\delta,p}([0,T];E), 
  \end{equation*}
  where we used Lemma~\ref{lem:embedding sobolev nikolskii} and \eqref{eq:embedding riesz}.
  
  The second embedding for the refined Nikolskii type space $\hat N^{\delta,p}$ follows directly from its definition.
\end{proof}

\begin{remark}\label{rmk:counterexample}
  Both embeddings are proper embeddings, which means in both cases the equality does not hold. 
  
  Indeed, an example of a set of functions which are included in $V^{1/2+H,2}([0,T];\R)$ but not in $W^{1/2+H,2}([0,T];\R)$ consists of the Cameron-Martin space of a fractional Brownian motion with Hurst index $H\in (0,1/2)$, see \cite{Friz2006} and \cite[Section~11]{Friz2014} and the references therein.
  
  To see that the second embedding is not an equality, we recall that the sample paths of a Brownian motion belong the Nikolskii space $N^{1/2,p}([0,T];\R)$ for $p\in (2,\infty)$, which was proven by~\cite{Roynette1993} (cf.~\cite[Proposition~1]{Rosenbaum2009}). However, it is also well-known that sample paths of a Brownian motion are not contained in $C^{2\var}([0,T];\R)$. In other words, they cannot be contained in $V^{1/2,p}([0,T];\R)$ for $p\in (2,\infty)$ since this is a subset of $C^{2\var}([0,T];\R)$ by Proposition~\ref{prop:riesz interpolation}.
\end{remark}

\subsection{Separability considerations}

In order to embed the Riesz type variation spaces into separable Banach spaces, we need to restricted the general metric space $E$ and focus here on the case $E=\R^n$ equipped with the Euclidean norm $|\cdot |$. As usual $|\cdot|$ induces the metric $d(x,y):= |y-x|$ for $x,y\in \R^n$ and thus $\|\cdot\|_{V^{\delta, p}}$, $\|\cdot\|_{\tilde V^{\delta, p}}$ and $\|\cdot\|_{\hat N^{\delta, p}}$ become semi-norms, which can be easily modified to proper norms by adding for instance the Euclidean norm of the functions evaluated at zero, cf. \eqref{eq:norm}. An immediate consequence of Theorem~\ref{thm:riesz characterization} is the following equivalence.

\begin{corollary}
  Let $T\in (0,\infty)$ and $\mathbb{R}^n$ be equipped with the Euclidean norm $|\cdot|$. If $\delta \in (0,1)$ and $p \in (1,\infty)$ are such that $\delta > 1/p$, then the semi-norms $\|\cdot\|_{V^{\delta,p}}$, $\|\cdot\|_{\tilde V^{\delta,p}}$ and $\|\cdot\|_{\hat{N}^{\delta,p}}$ are equivalent. 
\end{corollary}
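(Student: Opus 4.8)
The plan is to observe that this corollary is an immediate specialization of Theorem~\ref{thm:riesz characterization} to the case $(E,d) = (\R^n, |\cdot|)$, so essentially nothing new needs to be proven; the statement is recorded separately only because the preceding subsection re-interprets the semi-norms in the Banach-space setting where one can speak of (separable) normed spaces. First I would note that $(\R^n, |\cdot|)$ is a metric space with $d(x,y) := |y-x|$, so the hypotheses of Theorem~\ref{thm:riesz characterization} are met whenever $\delta \in (0,1)$ and $p \in (1,\infty)$ satisfy $\delta > 1/p$. Hence the chain of inequalities
\begin{equation*}
  \|f\|_{V^{\delta,p}} \lesssim \|f\|_{\tilde V^{\delta,p}} \lesssim \|f\|_{\hat N^{\delta,p}} \lesssim \|f\|_{V^{\delta,p}}
\end{equation*}
holds for every $f \in C([0,T];\R^n)$, which is exactly the asserted equivalence of the three quantities.

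Second, I would briefly address the distinction between semi-norms and norms. On $C([0,T];\R^n)$ the three functionals $\|\cdot\|_{V^{\delta,p}}$, $\|\cdot\|_{\tilde V^{\delta,p}}$, $\|\cdot\|_{\hat N^{\delta,p}}$ all vanish precisely on the constant paths (since each is built from increments $d(f_u,f_v)$ or $\frac1\delta\var$ semi-norms, which kill constants), so they are genuine semi-norms rather than norms; this is why the text remarks they must be upgraded, e.g. by adding $|f_0|$. The equivalence of the semi-norms is unaffected by such a modification, since adding the same term $|f_0|$ to each does not disturb the two-sided bounds.

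I do not expect any real obstacle here: the content is entirely contained in Theorem~\ref{thm:riesz characterization}, and the only thing to check is that the Euclidean $\R^n$ fits the abstract metric-space framework, which is immediate. A one-line proof suffices, of the form: ``This follows directly from Theorem~\ref{thm:riesz characterization} applied to the metric space $(E,d) = (\R^n, |\cdot|)$.'' If one wished to be slightly more expansive, one could add the remark that each of the three functionals is a semi-norm with null space the constants, so the equivalence is genuinely an equivalence of semi-norms (and of the induced norms after the modification in~\eqref{eq:norm}).
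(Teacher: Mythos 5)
Your proposal is correct and matches the paper's own treatment: the corollary is stated there as an immediate consequence of Theorem~\ref{thm:riesz characterization}, applied with $(E,d)=(\mathbb{R}^n,|\cdot|)$, exactly as you argue. Your additional remark on the semi-norm character (vanishing on constants, unaffected by adding $|f_0|$ as in~\eqref{eq:norm}) is accurate but not needed for the equivalence itself.
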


In order to turn $C^{\delta\hol}([0,T];\R^n)$ and $C^{p\var}([0,T];\R^n)$ into Banach spaces, one usually introduces the norms 
\begin{equation}\label{eq:norm}
  |f(0)|+\|f\|_{\delta\hol}\quad \text{and}\quad |g(0)|+\|g\|_{p\var}
\end{equation}
for $f\in C^{\delta\hol}([0,T];\R^n)$ and $g \in C^{p\var}([0,T];\R^n)$, respectively. These Banach spaces are not separable, see~\cite[Theorem~5.25]{Friz2010}.

To restore the separability, one can consider the closure of smooth paths. Let $C^\infty([0,T];\R^n)$ be the space of smooth functions $f\in C([0,T];\R^n)$. For $\delta \in (0,1)$ and $p\in (1,\infty)$ we define
\begin{align*}
  C^{0, \delta\hol}([0,T];\R^n):=\overline{C^\infty([0,T];\R^n)}^{\|\cdot\|_{\delta\hol}} \text{ and }
  C^{0, p\var}([0,T];\R^n):=\overline{C^\infty([0,T];\R^n)}^{\|\cdot\|_{p\var}}.
\end{align*}
These two Banach spaces are separable and one has the obvious embeddings 
\begin{equation*}
  C^{0, \delta\hol}([0,T];\R^n)\subset C^{\delta\hol}([0,T];\R^n)\quad \text{and}\quad 
  C^{0, p\var}([0,T];\R^n)\subset C^{p\var}([0,T];\R^n).
\end{equation*}

The Riesz type variation space $V^{\delta ,p}([0,T];\R^n)$ can be embedded into $ C^{0, \alpha \hol}([0,T];\R^n)$ and $C^{0, p\var}([0,T];\R^n)$.

\begin{lemma}
  Let $T\in (0,\infty)$ and $\mathbb{R}^n$ be equipped with the Euclidean norm $|\cdot|$. If $\delta \in (0,1)$ and $p \in (1,\infty)$ are such that $\delta > 1/p$, then one has the embeddings 
  \begin{equation*}
    V^{\delta,p}([0,T];\R^n)\subset C^{0, p\var}([0,T];\R^n)\quad \text{and}\quad V^{\delta,p}([0,T];\R^n)\subset C^{0, \alpha \hol}([0,T];\R^n) 
  \end{equation*}
  for $\alpha \in (0,\delta-1/p)$.
\end{lemma}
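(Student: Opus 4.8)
The plan is to deduce both embeddings from the characterization $V^{\delta,p}=\hat N^{\delta,p}$ (Theorem~\ref{thm:riesz characterization}) together with the embeddings $V^{\delta,p}\subset C^{\alpha\hol}$ and $V^{\delta,p}\subset C^{q\var}$ with $q=1/\delta$ already recorded in Proposition~\ref{prop:riesz interpolation} and Proposition~\ref{prop:variation embeddings}, so that the only genuinely new content is the \emph{density of smooth paths} in the $\|\cdot\|_{\alpha\hol}$-- and $\|\cdot\|_{p\var}$--topologies. Since $\alpha<\delta-1/p$ and $p>q$ are strict, there is room to lose a bit of regularity under regularization, which is exactly what one needs. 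First I would fix $f\in V^{\delta,p}([0,T];\R^n)$ and mollify: set $f^\varepsilon:=f*\rho_\varepsilon$ (after extending $f$ off $[0,T]$ in a regularity-preserving way, e.g.\ reflection, or simply working on a slightly larger interval), which is smooth.

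Next I would show $f^\varepsilon\to f$ in the two target topologies. For the H\"older case the standard argument is an interpolation: $\|f^\varepsilon-f\|_{\infty}\to 0$ by uniform continuity of $f$ (which holds since $f\in C([0,T];\R^n)$), while $\|f^\varepsilon-f\|_{(\delta-1/p)\hol}$ stays bounded because mollification does not increase the H\"older seminorm of $f$ and $f\in C^{(\delta-1/p)\hol}$ by Lemma~\ref{lem:properties riesz}(1); then the elementary interpolation inequality $\|g\|_{\alpha\hol}\lesssim \|g\|_\infty^{1-\alpha/(\delta-1/p)}\|g\|_{(\delta-1/p)\hol}^{\alpha/(\delta-1/p)}$, valid for $\alpha<\delta-1/p$, forces $\|f^\varepsilon-f\|_{\alpha\hol}\to0$. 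For the $p$-variation case one argues analogously: $f\in C^{q\var}$ with $q<p$, mollification does not increase $q$-variation, $\|f^\varepsilon-f\|_\infty\to 0$, and the interpolation estimate $\|g\|_{p\var}\lesssim \|g\|_\infty^{1-q/p}\|g\|_{q\var}^{q/p}$ (e.g.\ \cite[Proposition~5.5]{Friz2010}) yields $\|f^\varepsilon-f\|_{p\var}\to0$. Hence $f$ lies in the $\|\cdot\|_{\alpha\hol}$-- resp.\ $\|\cdot\|_{p\var}$--closure of $C^\infty([0,T];\R^n)$, i.e.\ in $C^{0,\alpha\hol}$ resp.\ $C^{0,p\var}$.

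The main obstacle — and the only point requiring care — is the behaviour of mollification near the endpoints $0$ and $T$: the convolution is not defined there without an extension, and a careless extension can destroy the H\"older or $q$-variation bound. The cleanest fix is to first reparametrize/extend $f$ to a function on $[-1,T+1]$ by reflection, $f(t):=f(-t)$ for $t\in[-1,0]$ and $f(t):=f(2T-t)$ for $t\in[T,T+1]$, which preserves both $\|\cdot\|_{\alpha\hol}$ and $\|\cdot\|_{q\var}$ up to a constant, then mollify at scale $\varepsilon<1$ and restrict back to $[0,T]$. Alternatively, one can invoke the standard fact (e.g.\ from \cite{Friz2010}) that piecewise linear interpolations $f^{\mathcal P}$ along a sequence of partitions $\mathcal P$ with mesh tending to zero converge to $f$ in $q'$-variation for any $q'>q$ — since $V^{\delta,p}\subset C^{q\var}$ — and are themselves Lipschitz, hence smoothable without further loss; this bypasses the endpoint issue entirely for the $p$-variation statement, and a parallel dyadic-interpolation argument works for the H\"older statement since $V^{\delta,p}\subset C^{(\delta-1/p)\hol}$ with $\alpha<\delta-1/p$. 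Either route reduces everything to the two interpolation inequalities above plus the elementary observation that regularizing operations do not enlarge the relevant seminorm.
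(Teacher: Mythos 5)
Your argument is correct, but it follows a genuinely different route from the paper. The paper does not construct any smooth approximations at all: it invokes Wiener's characterization of the closure spaces (\cite[Theorem~5.31]{Friz2010}), namely that $f\in C^{0,p\var}$ iff $\sup_{|\mathcal{P}|<\varepsilon}\sum_{[s,t]\in\mathcal{P}}|f_t-f_s|^p\to 0$ as $\varepsilon\to 0$ (and the analogous criterion for $C^{0,\alpha\hol}$), and then verifies this criterion in two lines directly from the definition of the Riesz seminorm: $|f_t-f_s|^p\leq \|f\|_{V^{\delta,p};[s,t]}^p|t-s|^{\delta p-1}$ together with super-additivity gives the bound $\|f\|_{V^{\delta,p}}^p\,\varepsilon^{\delta p-1}\to 0$, and similarly a factor $\varepsilon^{\delta-1/p-\alpha}\to 0$ for the H\"older case. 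You instead pass through the embeddings $V^{\delta,p}\subset C^{(\delta-1/p)\hol}\cap C^{1/\delta\var}$ and then reprove, by hand, the standard inclusions $C^{\beta\hol}\subset C^{0,\alpha\hol}$ for $\alpha<\beta$ and $C^{q\var}\subset C^{0,p\var}$ for $q<p$, via mollification after reflection (or piecewise linear interpolation) combined with the interpolation inequalities $\|g\|_{\alpha\hol}\lesssim\|g\|_\infty^{1-\alpha/\beta}\|g\|_{\beta\hol}^{\alpha/\beta}$ and $\|g\|_{p\var}\lesssim\|g\|_\infty^{1-q/p}\|g\|_{q\var}^{q/p}$; all of these steps are sound, and you correctly flag and fix the only delicate point (the endpoint extension before mollifying). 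What the paper's route buys is brevity and a bound stated directly in terms of $\|f\|_{V^{\delta,p}}$, at the price of quoting Wiener's characterization; your route avoids that theorem and is self-contained modulo standard smoothing facts, but is longer and only uses the weaker information $f\in C^{(\delta-1/p)\hol}\cap C^{1/\delta\var}$ rather than the full Riesz-variation structure. (Minor point: the citation for the variation interpolation inequality may not be Proposition~5.5 of \cite{Friz2010}, but the inequality itself is elementary, so nothing hinges on it.)
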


\begin{proof}
  For $f\in V^{\delta,p}([0,T];\R^n)$ and $\delta > 1/p$ we apply Lemma~\ref{lem:properties riesz} to obtain
  \begin{align*}
    \lim_{\varepsilon \to 0} \sup_{\mathcal{P}\subset [0,T],\,|\mathcal{P}|<\varepsilon} \sum_{[s,t]\in \mathcal{P}} |f_t-f_s|^p
    &\lesssim  \lim_{\varepsilon \to 0} \sup_{\mathcal{P}\subset [0,T],\,|\mathcal{P}|<\varepsilon} \sum_{[s,t]\in \mathcal{P}} \|f\|_{V^{\delta,p};[s,t]}^p|t-s|^{\delta p -1}\\
    &\leq \|f\|_{V^{\delta,p}}^p \lim_{\varepsilon \to 0} \varepsilon^{\delta p -1}=0,
  \end{align*}
  where $|\mathcal{P}|$ denotes the mesh size of the partition $\mathcal{P}$, and thus $f\in C^{0, p\var}([0,T];\R^n)$ due to Wiener's characterization of $C^{0,p\var}([0,T];\R^n)$, see \cite[Theorem~5.31]{Friz2010}.
  
  Using Wiener's characterization of $C^{0,\alpha \hol}([0,T];\R^n)$ for $\alpha \in (0,\delta-1/p)$, we get the second embedding because of
  \begin{equation*}
    \lim_{\varepsilon \to 0} \sup_{[s,t]\subset [0,T],\,|t-s|<\varepsilon} \frac{|f_t-f_s|}{|t-s|^\alpha}
    \leq \lim_{\varepsilon \to 0} \sup_{[s,t]\subset [0,T],\,|t-s|<\varepsilon} \bigg ( \frac{|f_t-f_s|^p}{|t-s|^{\delta p-1}}\bigg)^{1/p} \varepsilon^{\delta-1/p - \alpha}=0
  \end{equation*}
  for $f\in V^{\delta,p}([0,T];\R^n)$.
\end{proof}

\section{Continuity of the It\^o-Lyons map}\label{sec:ito map}

The dynamics of a controlled differential equation driven by a path $X\colon [0,T]\to \mathbb{R}^n$ of finite $q$-variation is formally given by 
\begin{equation}\label{eq:differential equation}
  \d Y_t = V(Y_t)\dd X_t,\quad Y_0=y_0, \quad t\in [0,T],
\end{equation}
where $y_0 \in \mathbb{R}^m$ is the initial condition, $V\colon \mathbb{R}^m \to \mathcal{L}(\mathbb{R}^n,\mathbb{R}^m)$ is a smooth enough vector field and $T\in (0,\infty)$. Here $\mathcal{L}(\mathbb{R}^n,\mathbb{R}^m)$ denotes the space of linear operators from $\mathbb{R}^n$ to $\mathbb{R}^m$. If the driving signal $X\in C^{p\var}([0,T];\mathbb{R}^n)$ for $p\in [1,2)$, Lyons~\cite{Lyons1994} first established the existence and uniqueness of a solution $Y$ to the equation~\eqref{eq:differential equation}. Moreover, he proved that the It\^o-Lyons map is a locally Lipschitz continuous map with respect to the $q$-variation topology. In order to restore the continuity for more irregular paths $X$, say $X\in C^{q\var}([0,T];\mathbb{R}^n)$ for an arbitrary large $q<\infty$, Lyons introduced the notion of rough paths in his seminal paper~\cite{Lyons1998}, see Subsection~\ref{subsection:rough path}. Based on Lyons' estimate, one can deduce the local Lipschitz continuity of the It\^o-Lyons map with respect to a H\"older topology, see for example~\cite{Friz2005a}. \medskip

The aim of this section is to particularly unify these two results by establishing the local Lipschitz continuity of the It\^o-Lyons map on Riesz type variation spaces. For this purpose we combine Lyons' estimates with our characterization of Riesz type variation in terms to $q$-variation to deduce the locally Lipschitz continuity of the It\^o-Lyons map with respect to an inhomogeneous Riesz type distance. See Proposition~\ref{eq:ito map bounded variation} for the continuity result in the regime of bounded variation paths. For the result in the general rough path setting we refer to Theorem~\ref{thm:ito map riesz variation} and the Corollaries~\ref{cor:ito map riesz type norm} and \ref{cor:ito map nikolskii type norm}. \medskip

To quantify the regularity of the vector field $V$ in the controlled differential equation~\eqref{eq:differential equation}, we introduce for $\alpha >0$ the space $\Lip^\alpha:=\Lip^\alpha ( \mathbb{R}^m;\mathcal{L}(\mathbb{R}^n,\mathbb{R}^m))$ in the sense of E. Stein, cf. \cite[Definition~10.2]{Friz2010}. For  $\alpha>0$ and $\lfloor \alpha \rfloor:= \max \{n\in \mathbb{N}\,:\, n \leq \alpha \}$ the space $\Lip^\alpha$ consists of all maps $V\colon \mathbb{R}^m \to \mathcal{L}(\mathbb{R}^n,\mathbb{R}^m)$ such that $V$ is $\lfloor \alpha \rfloor$-times continuously differentiable with $(\alpha - \lfloor \alpha \rfloor)$-H\"older continuous partial derivatives of order $\lfloor \alpha \rfloor$ (or with continuous partial derivatives of order $\alpha$ in the case $\alpha = \lfloor \alpha \rfloor$). On the space $\Lip^\alpha$ we introduce the usual norm $\| \cdot \|_{\Lip^\alpha}$ and further denote the supremum norm by $\|\cdot \|_\infty$. For the supremum norm on $C([0,T];\mathbb{R}^n)$ we write $\|\cdot\|_{\infty;[0,T]}:=\sup_{0\leq t\leq T}|\cdot|$.

\subsection{Continuity w.r.t. $\bf \tilde V^{1,p}$}\label{subsec:Ito map bounded variation}

In this subsection we derive the local Lipschitz continuity of the solution map on the Riesz type variation spaces $V^{1,p}([0,T];\R^n)$. To that end the equivalent characterization of $V^{1,p}([0,T];\R^n)$ given by $\tilde V^{1,p}([0,T];\R^n)$ turns out to be particularly convenient. The solution map~$\Phi$ is defined by
\begin{equation}\label{eq:ito map simple}
  \Phi \colon  \mathbb{R}^m \times \Lip^1 \times \tilde V^{1,p}([0,T];\mathbb{R}^n)  \to \tilde V^{1,p}([0,T];\mathbb{R}^m) \quad \text{via} \quad \Phi(y_0,V,X):=Y,
\end{equation}
where $Y$ denotes the solution to the integral equation
\begin{equation}\label{eq:rde integral}
  Y_t = y_0 +\int_0^t V(Y_s)\dd X_s, \quad t\in [0,T].
\end{equation}
First notice that the integral appearing in equation~\eqref{eq:rde integral} can be defined as a classical Riemann-Stieltjes integral with respect to bounded variation functions because of the embedding $\tilde V^{1,p}([0,T];\mathbb{R}^n)\subset C^{1\var}([0,T];\mathbb{R}^n)$ for all $p\in (1,\infty)$ due to Proposition~\ref{prop:riesz interpolation} and Lemma~\ref{lem:embedding bounded variation}. 

\begin{proposition}\label{eq:ito map bounded variation}
  For $X\in \tilde V^{1,p}([0,T];\mathbb{R}^n)$ with $p \in (1,\infty )$, $V\in \Lip^1$ and every initial condition $y_0 \in \mathbb{R}^m$, the controlled differential equation~\eqref{eq:rde integral} has a unique solution $Y\in \tilde V^{1,p}([0,T];\mathbb{R}^n)$ and the solution map~$\Phi$ as defined in~\eqref{eq:ito map simple} is locally Lipschitz continuous.
  
  More precisely, for $y_0^i \in \mathbb{R}^m$, $X^i\in \tilde V^{1,p}([0,T];\mathbb{R}^n)$, $V^i\in \Lip^1$ such that 
  \begin{equation*}
    \|X^i\|_{\tilde V^{1,p}}\leq b \quad \text{and} \quad \|V^i\|_{\Lip^1}\leq l,\quad i=1,2,
  \end{equation*}
  for some $b,l>0$ and corresponding solution $Y^i$, there exist a constant $C=C(b,l,p)\geq 1$ such that 
  \begin{equation*}
    \|Y^1-Y^2\|_{\tilde V^{1,p}} \leq C \big(\|V^1-V^2\|_{\infty} + |y_0^1-y^2_0|+ \|X^1- X^2\|_{\tilde V^{1,p}}\big).
  \end{equation*}
\end{proposition}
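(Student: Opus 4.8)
The plan is to reduce everything to the classical local Lipschitz estimate for ODEs driven by bounded variation paths, using the equivalence $\tilde V^{1,p}=V^{1,p}=W^{1,p}$ from Lemma~\ref{lem:embedding bounded variation} and the super-additivity of $\|\cdot\|_{\tilde V^{1,p};[s,t]}^p$ together with a localization over a partition adapted to a suitable control. Existence and uniqueness come for free: since $\tilde V^{1,p}([0,T];\R^n)\subset C^{1\var}([0,T];\R^n)$ (Proposition~\ref{prop:riesz interpolation} and Lemma~\ref{lem:embedding bounded variation}) and $V\in\Lip^1$ is, in particular, Lipschitz, equation~\eqref{eq:rde integral} is a classical Riemann--Stieltjes integral equation with a unique solution $Y\in C^{1\var}([0,T];\R^m)$; that $Y$ in fact lies in $\tilde V^{1,p}$ will follow from the quantitative estimate below (with $X^1=X^2$, $V^1=V^2$, $y_0^1=y_0^2$, i.e. the ``a priori bound'' version).

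The heart of the matter is the local Lipschitz estimate. First I would recall the classical pointwise bounds for ODEs driven by bounded variation signals (e.g.\ via Gronwall applied to the integral equation): writing $\Delta Y = Y^1-Y^2$, $\Delta V=V^1-V^2$, $\Delta X = X^1-X^2$, $\Delta y_0=y_0^1-y_0^2$, one has on any subinterval $[u,v]\subset[0,T]$
\begin{equation*}
  |\Delta Y_v - \Delta Y_u| \lesssim_{b,l} \Big( |\Delta Y_u| + |\Delta y_0| + \|\Delta V\|_\infty + \|\Delta X\|_{1\var;[u,v]} \Big)\, e^{c\,\|X^1\|_{1\var;[u,v]} + c\,\|X^2\|_{1\var;[u,v]}},
\end{equation*}
and similarly $\|Y^i\|_{1\var;[u,v]}\lesssim_{l}\|X^i\|_{1\var;[u,v]}\,e^{c\|X^i\|_{1\var;[u,v]}}$; these are standard and can be cited from \cite{Lyons1998,Friz2010}. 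The key point is that, unlike the $q$-variation norm, the $\tilde V^{1,p}$ "norm" localizes as $\|X\|_{\tilde V^{1,p};[u,v]}^p = \sum \|X\|_{W^{1,p};\cdot}^p \le \|X\|_{\tilde V^{1,p}}^p$ but the $1$-variation on a subinterval is controlled by $\|X\|_{W^{1,p};[u,v]}\,|v-u|^{1-1/p}$ (Hölder's inequality). So I would choose a partition $s=\tau_0<\dots<\tau_N=t$ of $[0,T]$ so fine that $\|X^1\|_{1\var;[\tau_j,\tau_{j+1}]}+\|X^2\|_{1\var;[\tau_j,\tau_{j+1}]}\le \varepsilon_0$ on each piece, with $\varepsilon_0=\varepsilon_0(b,l)$ small; the number $N$ of pieces is bounded in terms of $b,l,p,T$ only, since $\|X^i\|_{1\var}\le \|X^i\|_{\tilde V^{1,p}}\,T^{1-1/p}\le b\,T^{1-1/p}$. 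On each such piece the exponential factors above are bounded by a constant, so the per-interval estimate becomes
\begin{equation*}
  |\Delta Y_{\tau_{j+1}}-\Delta Y_{\tau_j}| \lesssim_{b,l} |\Delta Y_{\tau_j}| + |\Delta y_0| + \|\Delta V\|_\infty + \|\Delta X\|_{W^{1,p};[\tau_j,\tau_{j+1}]}\,|\tau_{j+1}-\tau_j|^{1-1/p},
\end{equation*}
and iterating over $j=0,\dots,N-1$ (a discrete Gronwall, $|\Delta Y_{\tau_j}|$ growing by at most a fixed factor each step over the bounded number $N$ of steps) yields a uniform sup-norm bound $\|\Delta Y\|_{\infty;[0,T]}\lesssim_{b,l,p} |\Delta y_0|+\|\Delta V\|_\infty+\|\Delta X\|_{\tilde V^{1,p}}$.

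With the sup-norm control of $\Delta Y$ in hand, I would then upgrade to the $\tilde V^{1,p}$ estimate. Returning to the integral equation on an arbitrary subinterval $[u,v]$ and differentiating, $\dot{\Delta Y} = V^1(Y^1)\dot X^1 - V^2(Y^2)\dot X^2$, one splits the right-hand side into the telescoping terms $(V^1(Y^1)-V^2(Y^2))\dot X^1 + V^2(Y^2)(\dot X^1-\dot X^2)$ and further $V^1(Y^1)-V^2(Y^2) = (V^1-V^2)(Y^1) + (V^2(Y^1)-V^2(Y^2))$, using $\|V^i\|_{\Lip^1}\le l$ and the mean value bound $|V^2(Y^1_s)-V^2(Y^2_s)|\le l\,|\Delta Y_s| \le l\,\|\Delta Y\|_{\infty;[0,T]}$. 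Taking $L^p$-norms on $[u,v]$ (i.e.\ the $W^{1,p};[u,v]$ norm) and using $\|\dot X^i\|_{L^p[u,v]}=\|X^i\|_{W^{1,p};[u,v]}$ gives
\begin{equation*}
  \|\Delta Y\|_{W^{1,p};[u,v]} \lesssim_{l}\ \big(\|\Delta V\|_\infty + \|\Delta Y\|_{\infty;[0,T]}\big)\,\|X^1\|_{W^{1,p};[u,v]} + \|\Delta X\|_{W^{1,p};[u,v]}.
\end{equation*}
Now raise to the $p$-th power, sum over an arbitrary partition $\mathcal P$ of $[0,T]$, and take the supremum: the left-hand side becomes $\|\Delta Y\|_{\tilde V^{1,p}}^p$ (by $\tilde V^{1,p}=\hat N^{1,p}$ with the super-additive definition, i.e.\ the sup over partitions of the sum of $p$-th powers), while on the right $\sum_{\mathcal P}\|X^1\|_{W^{1,p};[u,v]}^p \le \|X^1\|_{\tilde V^{1,p}}^p\le b^p$ and likewise $\sum_{\mathcal P}\|\Delta X\|_{W^{1,p};[u,v]}^p\le \|\Delta X\|_{\tilde V^{1,p}}^p$ — here I use exactly the super-additivity of $\|\cdot\|_{W^{1,p};[u,v]}^p$ recorded in Lemma~\ref{lem:embedding bounded variation} and its proof. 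Combining with the sup-norm bound already established, we obtain $\|\Delta Y\|_{\tilde V^{1,p}}\lesssim_{b,l,p} |\Delta y_0|+\|\Delta V\|_\infty+\|\Delta X\|_{\tilde V^{1,p}}$, which is the claim (the a priori bound $\|Y\|_{\tilde V^{1,p}}<\infty$ is the special case of this computation with $V^2\equiv 0$, $X^2\equiv 0$, $y_0^2=0$, so $Y^2\equiv y_0$ and the estimate gives $\|Y\|_{\tilde V^{1,p}}\lesssim_{b,l,p}|y_0|+l\cdot 0+\|X\|_{\tilde V^{1,p}}<\infty$ after the same partitioning argument).

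I expect the main obstacle to be bookkeeping the localization correctly: because $\tilde V^{1,p}$ is \emph{not} super-additive on subintervals as a function of the endpoints (only the sum of $p$-th powers over a partition is), one cannot directly run Gronwall in the $\tilde V^{1,p}$-norm; the trick is to run Gronwall only in the sup-norm (and in $1$-variation, which \emph{is} super-additive) over the fixed finite partition chosen to tame the exponential constants, and then pass to $\tilde V^{1,p}$ only at the very last step via the partition-sup and Hölder's inequality. Care is also needed to verify that the number $N$ of subintervals in the taming partition depends only on $b,l,p,T$ (not on the individual paths), which is where the embedding $\|X^i\|_{1\var}\le \|X^i\|_{\tilde V^{1,p}}T^{1-1/p}$ is essential.
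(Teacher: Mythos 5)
Your proof is correct, and while its skeleton matches the paper's (existence and uniqueness via the embedding into $C^{1\var}$, a separate sup-norm bound, then a $p$-th power summation over partitions exploiting super-additivity), the key quantitative step is genuinely different. The paper never differentiates: it localizes the known $1$-variation local Lipschitz estimate of \cite[Theorem~3.18]{Friz2010} on each subinterval $[s,t]$, bounds the driver's $1$-variation there by $k(s,t)\,(t-s)^{1-1/p}$ with $k(s,t)=\|X^1\|_{\tilde V^{1,p};[s,t]}+\|X^2\|_{\tilde V^{1,p};[s,t]}$, divides by $|t-s|^{1-1/p}$, raises to the power $p$ and sums, using super-additivity of $k^p$; the sup-norm difference is then imported from \cite[Theorem~3.15]{Friz2010}. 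You instead pass to the $W^{1,p}$ incarnation of $\tilde V^{1,p}$ (Lemma~\ref{lem:embedding bounded variation}), differentiate the integral equation a.e., estimate $\dot Y^1-\dot Y^2$ pointwise and take $L^p$-norms on subintervals, identifying the partition-sup of the $p$-th powers with $\|Y^1-Y^2\|_{W^{1,p}}^p=\|Y^1-Y^2\|_{\tilde V^{1,p}}^p$ at the end; you also re-derive the sup-norm bound by a localization plus discrete Gronwall rather than citing it (the taming partition is in fact superfluous, since $\|X^i\|_{1\var;[0,T]}\leq \|X^i\|_{\tilde V^{1,p}}T^{1-1/p}\leq bT^{1-1/p}$ already bounds the exponential factor globally). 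Your route is more elementary and self-contained in the $\delta=1$ setting, but it hinges on the existence of an a.e.\ derivative in $L^p$, which is special to $V^{1,p}=W^{1,p}$; the paper's argument, phrased entirely through $1$-variation estimates on subintervals, is exactly the mechanism that survives in the rough regime $\delta<1$ (Theorem~\ref{thm:ito map riesz variation}), which is why the authors set it up that way even in this simple case.
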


\begin{proof}
  Since $X^i\in \tilde V^{1,p}([0,T];\mathbb{R}^n)$, $X^i$ is in particular of bounded variation and thus the integral equation~\eqref{eq:rde integral} is well-defined and admits a unique solution $Y^i\in C^{1\var}([0,T];\mathbb{R}^n)$ for each $i=1,2$. Moreover, for every subinterval $[s,t]\subset [0,T]$ the local Lipschitz continuity of the solution map~$\Phi$ in $1$-variation, c.f.~\cite[Theorem~3.18]{Friz2010} and \cite[Remark~3.19]{Friz2010}, yields
  \begin{equation*}
    \|Y^1-Y^2\|_{1\var;[s,t]}\leq 2 \exp(3bl)\big(|y^1_s-y^2_s| l c(s,t) + \|V^1-V^2\|_\infty c(s,t)+l \| X^1-X^2\|_{1\var;[s,t]} \big) 
  \end{equation*}
  where $c(s,t)$ can be chosen such that 
  \begin{equation*}
    \|X^1\|_{1\var;[s,t]}+ \|X^2\|_{1\var;[s,t]}\leq c(s,t) \lesssim k(s,t) (t-s)^{1-1/p}
  \end{equation*}
  with $k(s,t):= \|X^1\|_{\tilde V^{1,p};[s,t]}+ \|X^2\|_{\tilde V^{1,p};[s,t]}$.
  Dividing both sides by $|t-s|^{1-1/p}$ and taking them to the power $p$ leads to 
  \begin{align*}
    &\frac{\|Y^1-Y^2\|^p_{1\var;[s,t]}}{|t-s|^{p-1}}\\
    &\qquad \lesssim \exp(3blp)\bigg(|y^1_s-y^2_s|^p l^p k(s,t)^p + \|V^1-V^2\|_\infty^p k(s,t)^p+l^p \frac{\|X^1-X^2\|_{1\var;[s,t]}^p}{|t-s|^{p-1}} \bigg).
  \end{align*}
  From this inequality we deduce, by summing over a partition of $[0,T]$ and taking then the supremum over all partitions, that
  \begin{equation*}
    \|Y^1-Y^2\|_{ \tilde V^{1,p}} \lesssim \exp(3vl) \big (\|V^1-V^2\|_{\infty}b  + \|Y^1-Y^2\|_{\infty;[0,T]}b l + l \|X^1- X^2\|_{\tilde V^{1,p}}\big),
  \end{equation*}
  where we used the super-additivity of $k(s,t)^p$ and $k(0,T)\leq 2 b$. Finally, $\|Y^1-Y^2\|_{\infty;[0,T]}$ can be estimated by \cite[Theorem~3.15]{Friz2010} to complete the proof.
\end{proof}

\begin{remark}
  An immediate consequence of Lemma~\ref{lem:embedding bounded variation} is that the local Lipschitz continuity as stated in Proposition~\ref{eq:ito map bounded variation} of the solution map~$\Phi$ given by~\eqref{eq:ito map simple}  also holds with respect to the (equivalent) Sobolev or Nikolskii metric.
\end{remark}

\subsection{Continuity w.r.t. general Riesz type variation}\label{subsection:rough path}

In order to give a meaning to the controlled differential equation~\eqref{eq:differential equation} for driving signals $X$ which are not of bounded variation, we introduce here the basic framework of rough path theory. For more comprehensive monographs about rough path theory we refer to \cite{Lyons2002,Friz2010,Friz2014}, and for the convenience of the reader the following definitions are mainly borrowed from~\cite{Friz2010}. \medskip

As already explained in the Introduction, a rough path takes values in the metric space $(G^N(\mathbb{R}^n),d_{cc})$ and not ``only'' in the Euclidean space $(\R^n,\|\,\cdot\,\|)$. Let us recall the basic ingredients: 

For $N\in\mathbb{N}$ and a path $Z\in C^{1\var}(\mathbb{R}^n)$ its $N$-step signature is given by 
\begin{align*}
  S_N(Z)_{s,t}:=&\bigg (1, \int_{s<u<t}\dd Z_u, \dots, \int_{s<u_1<\dots <u_N<t}\dd Z_{u_1} \otimes \cdots \otimes \d Z_{u_N} \bigg) \\
  &\in T^N(\mathbb{R}^n):= \bigoplus_{k=0}^N \big(\mathbb{R}^n\big)^{\otimes k},
\end{align*}
where $\big(\mathbb{R}^n\big)^{\otimes k}$ denotes the $k$-tensor space of $\mathbb{R}^n$ and $\mathbb{R}^{\otimes 0}:= \mathbb{R}$. We note that $T^N(\mathbb{R}^n)$ is an algebra (``level-$N$ truncated tensor algebra'') under the tensor product $\otimes$. The corresponding space of all these lifted paths is the step-$N$ free nilpotent group (w.r.t. $\otimes$) 
\begin{equation*}
  G^N(\mathbb{R}^n):= \{S_N(Z)_{0,T} \,:\, Z\in C^{1\var}([0,T];\mathbb{R}^n)\}\subset T^N(\mathbb{R}^n).
\end{equation*}
For every $g\in G^N(\mathbb{R}^n)$ the so-called ``Carnot-Caratheodory norm''
\begin{equation*}
  \|g\|_{cc}:= \inf \bigg\{  \int_0^T\, \|\d \gamma_s \| \,:\, \gamma\in C^{1\var}([0,T];\R^n) \text{ and } S_N(\gamma)_{0,T}=g \bigg \},
\end{equation*}
where $ \int_0^T\, \|\d \gamma_s \|$ is the length of $\gamma$ based on the Euclidean distance, is finite and the infimum is attained, see~\cite[Theorem~7.32]{Friz2010}. This leads to the \textit{Carnot-Caratheodory metric}~$d_{cc}$ via
\begin{equation*}
  d_{cc}(g,h):= \|g^{-1}\otimes h \|_{cc},\quad g,h\in  G^N(\mathbb{R}^n),
\end{equation*}
where  $g^{-1}$ is the inverse of $g$ in the sense $g^{-1}\otimes g=1$, see~\cite[Proposition~7.36 and Definition~7.41]{Friz2010}. Hence, $(G^N(\mathbb{R}^n),d_{cc})$  is a metric space.   

The space of all \textit{weakly geometric rough paths} of finite $q$-variation is then given by 
\begin{equation*}
  \Omega^q:=C^{q\var}([0,T];G^{\lfloor q \rfloor}(\mathbb{R}^n))
  := \bigg\{\X\in C([0,T];G^{\lfloor q \rfloor}(\mathbb{R}^n))\,:\, \|\X\|_{q\var} <\infty \bigg\},
\end{equation*}
where $\|\,\cdot\,\|_{q\var}$ is the $q$-variation with respect to the metric space~$(G^{\lfloor q \rfloor}(\mathbb{R}^n),d_{cc})$ as defined in~\eqref{eq:p-varition} and $\lfloor q \rfloor:=\max \{n\in \mathbb{N} \,:\, n\leq q\}$. Note that $\|\,\cdot\,\|_{q\var}$ on $\Omega^q$ is commonly called the homogeneous (rough path) norm since it is homogeneous with respect to the dilation map on $T^{\lfloor q \rfloor}(\R^n)$, cf.~\cite[Definition~7.13]{Friz2010}.\medskip

Coming back to the controlled differential equation~\eqref{eq:differential equation}, we first need to introduce a solution concept suitable for this equation given the driving signal is now a weakly geometric rough path. Let $V\colon \mathbb{R}^m \to \mathcal{L}(\mathbb{R}^n,\mathbb{R}^m)$ be a sufficiently smooth vector field and $y_0 \in \mathbb{R}^m$ be some initial condition. For a weakly geometric rough path $\X\in C^{q\var}([0,T];G^{\lfloor q \rfloor}(\mathbb{R}^n))$ we call $Y\in C([0,T];\mathbb{R}^m)$ a solution to the controlled differential equation (also called rough differential equation) 
\begin{equation}\label{eq:rough differential equation}
  \d Y_t = V(Y_t)\dd \X_t,\quad Y_0=y_0, \quad t\in [0,T],
\end{equation}
if there exist a sequence $(X^n)\subset C^{1\var}([0,T];\mathbb{R}^n)$ such that 
\begin{equation*}
  \lim_{n\to \infty} \sup_{0\leq s<t\leq T} d_{cc}(S_{\lfloor q\rfloor}(X^n)_{s,t},\X_{s,t})=0, \quad \sup_n \|S_{\lfloor q \rfloor}(X^n)\|_{q\var;[0,T]}<\infty,
\end{equation*}
and the corresponding solutions $Y^n$ to equation~\eqref{eq:rde integral} converge uniformly on $[0,T]$ to $Y$ as $n$ tends to $\infty$, cf.~\cite[Definition~10.17]{Friz2010}.\medskip

On the space $\Omega^q=C^{q\var}([0,T];G^{N}(\mathbb{R}^n))$ the classical way to restore to the continuity of the solution map associated to a controlled differential equation~\eqref{eq:rough differential equation} (also called It\^o-Lyons map) is to introduce the \textit{inhomogeneous variation distance} 
\begin{equation*}
  \rho_{q\var}(\X^1,\X^2):= \max_{k=1,\dots,N}\rho^{(k)}_{q\var;[0,T]}(\X^1,\X^2),
\end{equation*}
for $\X^1,\X^2 \in C^{q\var}([0,T];G^{N}(\mathbb{R}^n))$ and $q\in [1,\infty)$, with
\begin{equation*}
  \rho^{(k)}_{q\var;[s,t]}(\X^1,\X^2):= \sup_{\mathcal{P}\subset [s,t]} \bigg (\sum_{[u,v] \in \mathcal{P}} |\pi_k(\X_{u,v}^1-\X_{u,v}^2)|^{\frac{q}{k}} \bigg)^{\frac{k}{q}}, \quad [s,t]\subset [0,T],
\end{equation*}
for $k=1,\dots,N$, where $\pi_k\colon T^N(\mathbb{R}^n)\to \big(\mathbb{R}^n\big)^{\otimes k}$ is the projection to the $k$-th tensor level and each tensor level $\big(\mathbb{R}^n\big)^{\otimes k}$ is equipped with the Euclidean structure. Here we recall that $\X_{u,v}:=\X^{-1}_u \otimes \X_v$. Note that the distance~$ \rho_{q\var}$ is not homogeneous anymore with respect to the dilation map on $T^{N}(\R^n)$ as indicated by its name.

In the seminal paper~\cite{Lyons1998}, Lyons showed that the solution map $\Phi$ given by
\begin{equation*}
  \Phi \colon  \mathbb{R}^m \times \Lip^{\gamma} \times C^{1/\delta\var}([0,T];G^{\lfloor 1/\delta \rfloor}(\mathbb{R}^n)) \to C^{1/\delta\var}([0,T];\mathbb{R}^m) \quad \text{via} \quad \Phi(y_0,V,\X):=Y,
\end{equation*}
where $Y$ denotes the solution to equation~\eqref{eq:rough differential equation} given the input $(y_0,V,\X)$, is local Lipschitz continuity  with respect to the inhomogeneous variation distance~$\rho_{1/\delta\var}$ for any finite regularity $1/\delta >1$. \medskip

In the spirit of our characterization~\eqref{eq:holder-variation mixed norm} of the Riesz type variation we introduce for $\delta \in (0,1)$ and $p\in [1/\delta,\infty)$ the inhomogeneous \textit{mixed H\"older-variation distance} by
\begin{equation*}
  \rho_{\tilde V^{\delta,p}}(\X^1,\X^2):= \max_{k=1,\dots,N}\rho^{(k)}_{\tilde V^{\delta,p};[0,T]}(\X^1,\X^2),
\end{equation*}
for $\X^1,\X^2 \in \tilde V^{\delta,p}([0,T];G^{N}(\mathbb{R}^n))$ and for $k=1,\dots,N$, we set 
\begin{equation*}
  \rho^{(k)}_{\tilde V^{\delta,p};[s,t]}(\X^1,\X^2):= \sup_{\mathcal{P}\subset [s,t]} \bigg (\sum_{[u,v] \in \mathcal{P}} \frac{\rho^{(k)}_{1/\delta\var;[u,v]}(\X^1,\X^2)^{\frac{p}{k}}}{|u-v|^{\delta p-1}} \bigg)^{\frac{k}{p}}, \quad [s,t]\subset [0,T].
\end{equation*}
Furthermore, we define the \textit{Riesz type geometric rough path space} by
\begin{align*}
 \Omega^{\delta,p} 
 := V^{\delta,p}([0,T];G^{\lfloor 1/\delta \rfloor}(\mathbb{R}^n))
 =\tilde V^{\delta,p}([0,T];G^{\lfloor 1/\delta \rfloor}(\mathbb{R}^n))
 = \hat N^{\delta,p}([0,T];G^{\lfloor 1/\delta \rfloor}(\mathbb{R}^n)),
\end{align*}
for $\delta \in (0,1),\,p\in (1/\delta,\infty)$. The identifies hold due to Theorem~\ref{thm:riesz characterization} since $G^{\lfloor 1/\delta \rfloor}(\mathbb{R}^n)$ is a metric space equipped with the Carnot-Caratheodory metric~$d_{cc}$.\medskip

Relying on the equivalent characterization of Riesz type variation in terms of $q$-variation (cf. Theorem~\ref{thm:riesz characterization}) and on the inhomogeneous mixed H\"older-variation distances for Riesz type geometric rough paths, the local Lipschitz continuity of the It\^o-Lyons map $\Phi$ given by
\begin{equation}\label{eq:ito lyons map}
  \Phi \colon \mathbb{R}^m \times \Lip^{\gamma} \times \tilde V^{\delta,p}([0,T];G^{\lfloor 1/\delta \rfloor}(\mathbb{R}^n)) \to \tilde V^{\delta,p}([0,T];\mathbb{R}^m) \quad \text{via} \quad \Phi(y_0,V,\X):=Y,
\end{equation}
where $Y$ denotes the solution to equation~\eqref{eq:rough differential equation} given the input $(y_0,V,\X)$, can be obtained with respect to the inhomogeneous mixed H\"older-variation distance. 

\begin{theorem}\label{thm:ito map riesz variation}
  Let $\delta \in (0,1)$ and $\gamma, p \in (1,\infty)$ be such that $\delta > 1/p$ and $\gamma >1/\delta$. For a 
  Riesz type geometric rough path $\X\in \tilde V^{\delta,p}([0,T];G^{\lfloor 1/\delta\rfloor }(\mathbb{R}^n))$, for $V\in \Lip^{\gamma}$ and for every initial condition $y_0 \in \mathbb{R}^m$, the controlled differential equation~\eqref{eq:rough differential equation} has a unique solution $Y\in \tilde V^{\delta,p}([0,T];\mathbb{R}^m)$.
  
  Furthermore, the It\^o-Lyons map $\Phi$ as defined in \eqref{eq:ito lyons map} is locally Lipschitz continuous, that is, for $y_0^i \in \mathbb{R}^m$, $X^i\in \tilde V^{\delta,p}([0,T];G^{\lfloor 1/\delta\rfloor}(\mathbb{R}^n))$ and $V^i\in \Lip^{\gamma}$ satisfying 
  \begin{equation*}
    \|\X^i\|_{ \tilde V^{\delta,p}}\leq b \quad \text{and} \quad \|V^i\|_{\Lip^\gamma}\leq l, \quad i=1,2,
  \end{equation*}
  for some $b,l>0$, with corresponding solution $Y^i$, there exist a constant $C=C(b,l,\gamma,\delta,p)\geq 1$ such that 
  \begin{equation*}
    \|Y^1-Y^2\|_{ \tilde V^{\delta,p}} \leq C \big(\|V^1-V^2\|_{\Lip^{\gamma-1}} + |y_0^1-y^2_0|+ \rho_{ \tilde V^{\delta,p}}(\X^1,\X^2)\big).
  \end{equation*}
\end{theorem}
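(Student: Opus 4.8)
The plan is to push everything down to the classical $q$-variation rough path estimates with $q:=1/\delta$, organising the extra integrability parameter $p$ purely through super-additivity. Write $N:=\lfloor 1/\delta\rfloor$ and $\alpha:=\delta-1/p>0$, so that $\delta p-1=\alpha p>0$. Testing the suprema in the definitions of $\|\cdot\|_{\tilde V^{\delta,p};[s,t]}$ and $\rho^{(k)}_{\tilde V^{\delta,p};[s,t]}$ against the trivial partition $\{[s,t]\}$ gives the scaling relations
\[
  \|\X\|_{q\var;[s,t]}\le\|\X\|_{\tilde V^{\delta,p};[s,t]}\,|t-s|^{\alpha},\qquad
  \rho^{(k)}_{q\var;[s,t]}(\X^1,\X^2)\le\rho^{(k)}_{\tilde V^{\delta,p};[s,t]}(\X^1,\X^2)\,|t-s|^{\alpha k},
\]
for every subinterval $[s,t]\subset[0,T]$. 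In particular $\|\X\|_{q\var;[0,T]}<\infty$, so $\X\in C^{q\var}([0,T];G^{N}(\mathbb{R}^n))$, and since $\gamma>q$, Lyons' universal limit theorem (\cite[Chapter~10]{Friz2010}, \cite{Lyons1998}) already yields existence and uniqueness of a solution $Y\in C^{q\var}([0,T];\mathbb{R}^m)$ together with the local estimates used below. Moreover $\|\X^i\|_{q\var;[s,t]}\le bT^\alpha=:R$ uniformly over all subintervals, so every $q$-variation RDE estimate invoked below runs with a constant depending only on $R,\gamma,q$, i.e. only on $b,l,\gamma,\delta,p$; all such constants are denoted $C$. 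We shall also use repeatedly that $(s,t)\mapsto\sup_{\mathcal P\subset[s,t]}\sum_{[u,v]\in\mathcal P}\phi(u,v)$ is super-additive on $\Delta_T$ whenever $\phi\ge0$ (concatenate partitions), so that $\|\cdot\|^p_{\tilde V^{\delta,p};[s,t]}$ and $\rho^{(k)}_{\tilde V^{\delta,p};[s,t]}(\cdot,\cdot)^{p/k}$ are super-additive.

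For the a priori bound, the classical estimate $\|Y\|_{q\var;[s,t]}\le C\,\|\X\|_{q\var;[s,t]}$ and the first scaling relation give, for every subinterval, $\|Y\|^p_{q\var;[s,t]}/|t-s|^{\delta p-1}\le C^p\|\X\|^p_{\tilde V^{\delta,p};[s,t]}$. Summing over an arbitrary partition of $[0,T]$, using the super-additivity of $\|\X\|^p_{\tilde V^{\delta,p};\,\cdot}$, and taking the supremum yields $\|Y\|^p_{\tilde V^{\delta,p}}\le C^p\|\X\|^p_{\tilde V^{\delta,p}}\le C^pb^p$; hence $Y\in\tilde V^{\delta,p}([0,T];\mathbb{R}^m)$, and uniqueness is already part of the $q$-variation theory.

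For local Lipschitz continuity I would invoke the classical local continuity estimate for RDEs in the form (valid on every subinterval since $\|\X^i\|_{q\var;[s,t]}\le R$)
\[
  \|Y^1-Y^2\|_{q\var;[s,t]}\le C\Big[\big(|Y^1_s-Y^2_s|+\|V^1-V^2\|_{\Lip^{\gamma-1}}\big)\big(\|\X^1\|_{q\var;[s,t]}\vee\|\X^2\|_{q\var;[s,t]}\big)+\rho_{q\var;[s,t]}(\X^1,\X^2)\Big],
\]
the decisive feature being the factor $\|\X^i\|_{q\var;[s,t]}\le C|t-s|^\alpha$ on the initial-condition and vector-field mismatches. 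Here $|Y^1_s-Y^2_s|$ is dominated uniformly by the global bound $\|Y^1-Y^2\|_{\infty;[0,T]}\le C(|y^1_0-y^2_0|+\|V^1-V^2\|_{\Lip^{\gamma-1}}+\rho_{q\var;[0,T]}(\X^1,\X^2))\le C\,D_0$, where $D_0:=|y^1_0-y^2_0|+\|V^1-V^2\|_{\Lip^{\gamma-1}}+\rho_{\tilde V^{\delta,p}}(\X^1,\X^2)$ and $\rho_{q\var;[0,T]}\le C\rho_{\tilde V^{\delta,p}}$ follows from the second scaling relation with $[s,t]=[0,T]$. Raising the displayed estimate to the power $p$, dividing by $|t-s|^{\delta p-1}$, bounding $\max_k(\cdot)\le\sum_k(\cdot)$ on the $\rho_{q\var}$ term, and applying the two scaling relations level by level — $\|\X^i\|^p_{q\var;[s,t]}/|t-s|^{\delta p-1}\le\|\X^i\|^p_{\tilde V^{\delta,p};[s,t]}$ and $\rho^{(k)}_{q\var;[s,t]}(\X^1,\X^2)^p/|t-s|^{\delta p-1}\le T^{\alpha p(k-1)}\rho^{(k)}_{\tilde V^{\delta,p};[s,t]}(\X^1,\X^2)^p$ — produces
\[
  \frac{\|Y^1-Y^2\|^p_{q\var;[s,t]}}{|t-s|^{\delta p-1}}\le C\Big[D_0^p\sum_{i=1,2}\|\X^i\|^p_{\tilde V^{\delta,p};[s,t]}+\sum_{k=1}^{N}T^{\alpha p(k-1)}\rho^{(k)}_{\tilde V^{\delta,p};[s,t]}(\X^1,\X^2)^p\Big]=:\Omega(s,t).
\]
Each summand on the right is super-additive in $(s,t)$ — the maps $\|\X^i\|^p_{\tilde V^{\delta,p};[s,t]}$ and $\rho^{(k)}_{\tilde V^{\delta,p};[s,t]}(\X^1,\X^2)^{p/k}$ are super-additive, and raising a non-negative super-additive function to a power $\ge1$ preserves super-additivity (cf. Remark~\ref{rmk:supper additive}) — so $\Omega$ is super-additive, and summing over a partition of $[0,T]$ and taking the supremum gives $\|Y^1-Y^2\|^p_{\tilde V^{\delta,p}}\le\Omega(0,T)\le C(D_0^p+\rho_{\tilde V^{\delta,p}}(\X^1,\X^2)^p)$. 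Taking $p$-th roots and recalling $D_0$ yields the asserted estimate; in particular $Y^1,Y^2\in\tilde V^{\delta,p}([0,T];\mathbb{R}^m)$.

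The only genuine difficulty I foresee is the classical input: one must secure the $q$-variation RDE continuity estimate precisely in the displayed form, with the initial-condition and vector-field mismatches multiplied by a factor $\|\X\|_{q\var;[s,t]}$ that genuinely vanishes with $|t-s|$. Were this decay absent, the Riesz-type sums $\sum_{\mathcal P}(\cdot)/|v-u|^{\delta p-1}$ would diverge along fine partitions because $\delta p-1>0$; securing it is exactly where the loss-free nature of the It\^o--Lyons map enters, and it can be extracted from the flow estimates of \cite[Chapter~10]{Friz2010} by working on a greedy partition on which $\|\X\|_{q\var}$ is small and chaining. Everything else is bookkeeping of the exponents $\alpha k$.
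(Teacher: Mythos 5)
Your reduction scheme -- the trivial-partition scaling relations together with super-additivity of $\|\X\|^p_{\tilde V^{\delta,p};[\cdot]}$ and of $\rho^{(k)}_{\tilde V^{\delta,p};[\cdot]}(\X^1,\X^2)^{p/k}$ -- is exactly the bookkeeping the paper performs, and that part of your argument is sound, as is the a priori bound placing $Y$ in $\tilde V^{\delta,p}$. The genuine gap is the one you flag yourself: the localized estimate
\begin{equation*}
  \|Y^1-Y^2\|_{1/\delta\var;[s,t]}\le C\Big[\big(|Y^1_s-Y^2_s|+\|V^1-V^2\|_{\Lip^{\gamma-1}}\big)\max_{i=1,2}\|\X^i\|_{1/\delta\var;[s,t]}+\rho_{1/\delta\var;[s,t]}(\X^1,\X^2)\Big]
\end{equation*}
is not a quotable classical statement, and your sketch for securing it (greedy partition on which $\|\X\|_{1/\delta\var}$ is small, then chaining) does not obviously produce it. The available continuity estimates (Lyons; \cite[Theorem~10.26]{Friz2010}) are phrased with respect to a single control $\omega$ that must dominate the inhomogeneous distance as well as the two drivers; localizing them to $[s,t]$ with the distance part normalized by $\rho^{(k)}_{1/\delta\var;[s,t]}(\X^1,\X^2)$ makes $\omega(s,t)$ bounded below by a constant, so one only obtains the symmetric bound $\|Y^1-Y^2\|_{1/\delta\var;[s,t]}\lesssim \|Y^1-Y^2\|_{\infty}+\|V^1-V^2\|_{\Lip^{\gamma-1}}+\rho_{1/\delta\var;[s,t]}(\X^1,\X^2)$, i.e.\ without the vanishing factor on the mismatch terms -- which, as you yourself note, is fatal for the Riesz-scale sums since $\delta p-1>0$. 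The asymmetric form is in fact true and can be derived (for instance by a triangle inequality through intermediate solutions: first change only the driver, with identical initial data and vector field, using a distance-normalized control; then change the vector field and the initial condition with a common driver, where a driver-only control is admissible), but that derivation is precisely the missing piece of work, not a citation.

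The paper's proof avoids any interval-wise asymmetric estimate altogether. It quotes the two control-based estimates \eqref{eq:initial continuous} and \eqref{eq:increment} from the proof of \cite[Theorem~10.26]{Friz2010}, in which \emph{every} mismatch term carries the factor $\omega(u,v)^{\delta}$, and then makes the decisive choice of control \eqref{eq:control function}: the distance components are normalized by the \emph{global} quantities $\rho^{(k)}_{\tilde V^{\delta,p};[0,T]}(\X^1,\X^2)$. This normalization yields simultaneously $\rho_{1/\delta\om}(\X^1,\X^2)\lesssim\rho_{\tilde V^{\delta,p}}(\X^1,\X^2)$ and
\begin{equation*}
  \sup_{\mathcal P\subset[0,T]}\sum_{[s,t]\in\mathcal P}\frac{\omega(s,t)^{\delta p}}{|t-s|^{\delta p-1}}\lesssim \|\X^1\|^p_{\tilde V^{\delta,p}}+\|\X^2\|^p_{\tilde V^{\delta,p}}+1,
\end{equation*}
after which the same super-additivity bookkeeping you propose closes the argument. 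Your proposal has no counterpart to this normalization trick; to complete it you must either import that idea or give a full proof of your displayed localized estimate.
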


Before we come to the proof, we recall that a continuous function $\omega\colon \Delta_T \to [0,\infty)$ is called \textit{control function} if $\omega(s,s)=0$ for $s\in[0,T]$ and $\omega$ is super-additive, cf. Remark~\ref{rmk:supper additive}.

\begin{proof}
  Due to Proposition~\ref{prop:variation embeddings} and Theorem~\ref{thm:riesz characterization}, the assumption $\X\in \tilde V^{\delta,p}([0,T];G^{\lfloor 1/\delta\rfloor }(\mathbb{R}^n))$ implies $\X\in C^{1/\delta\var}([0,T];G^{\lfloor 1/\delta\rfloor }(\mathbb{R}^n))$ as $\delta > 1/p$. Therefore, the controlled differential equation~\eqref{eq:rough differential equation} has a unique solution $Y\in C^{1/\delta\var}([0,T];\mathbb{R}^m)$ given the regularity of the vector field $V\in \Lip^{\gamma}$ with $\gamma >1/\delta$, see \cite[Theorem~10.26]{Friz2010}.
  
  It remains to show the Riesz type variation of $Y$ and the local Lipschitz continuity of the It\^o-Lyons map~$\Phi$ as defined in~\eqref{eq:ito lyons map}. For this purpose we choose a suitable control function $\omega$ (see \eqref{eq:control function} for the specific definition) and introduce the distance
  \begin{equation*}
    \rho_{1/\delta\om} (\X^1,\X^2):=\max_{k=1,\dots, \lfloor 1/\delta\rfloor}\rho_{1/\delta\om;[0,T]}^{(k)} (\X^1,\X^2)
  \end{equation*}
  for $\X^1, \X^2 \in \tilde V^{\delta,p}([0,T];G^{\lfloor 1/\delta\rfloor }(\mathbb{R}^n))$ and 
  \begin{equation*}
    \rho_{1/\delta\om;[0,T]}^{(k)} (\X^1,\X^2):=\sup_{0\leq s<t\leq T} \frac{|\pi_k(\X^1_{s,t}-\X^2_{s,t})|}{\omega(s,t)^{k\delta }},\quad k=1,\dots, \lfloor 1/\delta\rfloor.
  \end{equation*}
  As one can see in the proof of \cite[Theorem~10.26]{Friz2010}, one has the following two estimates for the control function $\omega$ and a constant $C=C(\gamma,\delta)>0$:
  \begin{align}\label{eq:initial continuous}
    \|Y^1-Y^2&\|_{\infty;[0,T]}\nonumber\\
    &\leq C \bigg (|y_0^1-y_0^2| + \frac{1}{l} \|V^1-V^2\|_{\Lip^{\gamma-1}}+\rho_{1/\delta\om} (\X^1,\X^2) \bigg)\exp (Cl^{1/\delta}\omega (0,T))
  \end{align}
  and, for all $u<v$ in $[0,T]$,
  \begin{align}\label{eq:increment}
    |Y^1_{u,v}&-Y^2_{u,v}| \nonumber\\
    &\leq C  \bigg (l |Y_u^1-Y_u^2| +  \|V^1-V^2\|_{\Lip^{\gamma-1}}+l\rho_{1/\delta\om} (\X^1,\X^2) \bigg)\omega(u,v)^{\delta}\exp (Cl^{1/\delta}\omega (0,T)).
  \end{align}
  From inequality~\eqref{eq:increment} we deduce that 
  \begin{align*}
    \|& Y^1-Y^2\|_{1/\delta\var;[s,t]}^{\frac{1}{\delta}}= \sup_{\mathcal{P}\subset [s,t]}\sum_{[u,v]\in \mathcal{P}} |Y^1_{u,v}-Y^2_{u,v}|^{1/\delta} \\
    & \lesssim  \bigg (l \|Y^1-Y^2\|_{\infty;[s,t]} +  \|V^1-V^2\|_{\Lip^{\gamma-1}}+l\rho_{1/\delta\om} (\X^1,\X^2) \bigg)^{1/\delta}\omega(s,t)\exp (Cl^{1/\delta}\omega (0,T))^{1/\delta},
  \end{align*}
  which further leads to 
  \begin{align*}
    \|Y^1-Y^2\|_{\tilde V^{\delta,p}}^{p}=& \sup_{\mathcal{P}\subset [0,T]}\sum_{[s,t]\in \mathcal{P}} \frac{\| Y^1-Y^2\|_{1/\delta\var;[s,t]}^p}{|t-s|^{\delta p -1 }} \\
    \lesssim & \bigg (l \|y^1-y^2\|_{\infty;[0,T]} +  \|V^1-V^2\|_{\Lip^{\gamma-1}}+l\rho_{1/\delta\om} (\X^1,\X^2) \bigg)^{p} \\
    &\times \exp (Cl^{1/\delta}\omega (0,T))^{p} \bigg(\sup_{\mathcal{P}\subset [0,T]}\sum_{[s,t]\in \mathcal{P}} \frac{\omega(s,t)^{\delta p}}{|t-s|^{\delta p -1 }}\bigg)^p.
  \end{align*}
  Plugging in estimate~\eqref{eq:initial continuous} in the last inequality gives 
  \begin{align}\label{eq:estimate ito map}
  \begin{split}
    \|Y^1-Y^2\|_{\tilde V^{\delta,p}}
    \lesssim & \bigg (l |y^1_0-y_0^2| +  \|V^1-V^2\|_{\Lip^{\gamma-1}}+l\rho_{1/\delta\om} (\X^1,\X^2) \bigg) \\
    &\times \exp (Cl^{1/\delta}\omega (0,T)) \bigg(\sup_{\mathcal{P}\subset [0,T]}\sum_{[s,t]\in \mathcal{P}} \frac{\omega(s,t)^{\delta p}}{|t-s|^{\delta p-1 }}\bigg).
  \end{split}
  \end{align}
  In order to complete the proof, we consider the control function 
  \begin{equation}\label{eq:control function}
    \omega (s,t):= \|\X^1\|_{1/\delta \var; [s,t]}^{\frac{1}{\delta}}+\|\X^2\|_{1/\delta \var; [s,t]}^{\frac{1}{\delta}}+\sum_{k=1}^{\lfloor 1/\delta \rfloor} \omega_{\X^1,\X^2}^{(k)}(s,t),\quad (s,t)\in \Delta_T,
  \end{equation}
  where 
  \begin{equation*}
    \omega_{\X^1,\X^2}^{(k)}(s,t):= \bigg (\frac{\rho_{1/\delta\var;[s,t]}^{(k)} (\X^1,\X^2)}{\rho_{\tilde V^{\delta,p};[0,T]}^{(k)} (\X^1,\X^2)}\bigg)^{\frac{1}{\delta k}}
  \end{equation*}
  with the convention $0/0:=0$, and investigate some properties of $\omega$. First notice that $\omega$ fulfills all the requirements to apply \cite[Theorem~10.26]{Friz2010}.
  Moreover, it is easy to see that 
  \begin{equation}\label{eq:estimate control function 1}
    \rho_{1/\delta \om} (\X^1,\X^2)\lesssim \rho_{\tilde V^{\delta,p}} (\X^1,\X^2)
  \end{equation}
  as one has for $k=1,\dots,\lfloor 1/\delta \rfloor$ and $0\leq s < t\leq T$ the following estimate 
  \begin{align*}
    |\pi_k (\X^1_{s,t}-\X^2_{s,t})| \leq \frac{\rho_{1/\delta \om;[s,t]}^{(k)}(\X^1,\X^2)}{\rho_{\tilde V^{\delta,p};[0,T]}^{(k)} (\X^1,\X^2)}\rho_{\tilde V^{\delta,p};[0,T]}^{(k)}(\X^1,\X^2)
    \leq \omega(s,t)^{\delta k} \rho_{\tilde V^{\delta,p}}(\X^1,\X^2).
  \end{align*}
  The last observation on $\omega$ we need is 
  \begin{equation}\label{eq:estimate control function 2}
    \sup_{\mathcal{P}\subset [0,T]}\sum_{[s,t]\in \mathcal{P}} \frac{\omega(s,t)^{\delta p}}{|t-s|^{\delta p-1 }}\lesssim \|\X^1\|_{\tilde V^{\delta,p}}^p +\|\X^2\|_{\tilde V^{\delta,p}}^p+1.
  \end{equation}
  Indeed, by Proposition~\ref{prop:variation embeddings} we have for every partition $\mathcal{P}$ of $[0,T]$ 
  \begin{equation*}
    \sum_{[s,t]\in \mathcal{P}}\frac{\|\X^i\|_{1/\delta \var;[s,t]}^p}{|t-s|^{\delta p-1}}
    \lesssim \sum_{[s,t]\in \mathcal{P}}\frac{\|\X^i\|_{\tilde V^{\delta,p};[s,t]}^p|t-s|^{\delta p-1}}{|t-s|^{\delta p-1}}\lesssim \|\X^i\|^p_{\tilde V^{\delta,p}},\quad i=1,2,
  \end{equation*}
  and further using 
  \begin{align*}
    \rho^{(k)}_{1/\delta \var;[s,t]}(\X^1,\X^2)
    &\leq \bigg( \frac{\rho^{(k)}_{1/\delta \var;[s,t]}(\X^1,\X^2)^{\frac{p}{k}} }{|t-s|^{\delta p -1} }\bigg)^{\frac{k}{p}} |t-s|^{(\delta-1/p)k}\\
    &\leq \rho^{(k)}_{\tilde V^{\delta,p};[s,t]}(\X^1,\X^2)  |t-s|^{(\delta-1/p)k},\quad \text{for }  k=1,\dots,\lfloor 1/\delta \rfloor,
  \end{align*}
  we arrive at
  \begin{align*}
    \sum_{[s,t]\in \mathcal{P}} \frac{\omega^{(k)}_{\X^1,\X^2}(s,t)^{\delta p} }{|t-s|^{\delta p-1}} 
    &= \rho^{(k)}_{\tilde V^{\delta,p};[0,T]}(\X^1,\X^2)^{-\frac{p}{k}}\sum_{[s,t]\in \mathcal{P}} \frac{ \rho^{(k)}_{1/\delta\var;[s,t]}(\X^1,\X^2)^{\frac{p}{k}} }{|t-s|^{\delta p-1}} \\
    & \leq\rho^{(k)}_{\tilde V^{\delta,p};[0,T]}(\X^1,\X^2)^{-\frac{p}{k}}  \sum_{[s,t]\in \mathcal{P}} \rho^{(k)}_{\tilde V^{\delta,p};[s,t]}(\X^1,\X^2)^{\frac{p}{k}} 
    \leq 1.
  \end{align*}
  By combining these estimates we deduce~\eqref{eq:estimate control function 2}.
  
  Therefore, estimate~\eqref{eq:estimate ito map} together with \eqref{eq:estimate control function 1} and \eqref{eq:estimate control function 2} reveals 
  \begin{align*}
    \|Y^1-Y^2\|_{\tilde V^{\delta,p}} 
    \lesssim & \bigg (l |y^1_0-y_0^2| +  \|V^1-V^2\|_{\Lip^{\gamma-1}}+l\rho_{\tilde V^{\delta,p}} (\X^1,\X^2) \bigg)\\
    &\times \exp (Cl^{1/\delta}(2b+1)) (2b+1),
  \end{align*}
  which completes the proof.
\end{proof}

\subsection{Inhomogeneous Riesz type distance}

In the context of rough path theory it is very convenient to work with the characterization of Riesz type variation in terms of classical $q$-variation and to introduce the corresponding inhomogeneous mixed H\"older-variation distance $\rho_{\tilde V^{\delta,p}}$, as we have seen in the previous subsection. 
However, also the other other characterizations of Riesz type variation allow for obtaining the local Lipschitz continuity of the It\^o-Lyons map $\Phi$ as defined in~\eqref{eq:ito lyons map}. \medskip

Keeping in mind the Riesz type variation~\eqref{eq:Riesz variation}, we define for $\delta \in (0,1)$ and $p\in [1/\delta,\infty)$ inhomogeneous \textit{Riesz type distance} by
\begin{equation*}
  \rho_{V^{\delta,p}}(\X^1,\X^2):= \max_{k=1,\dots,\lfloor 1/\delta \rfloor}\rho^{(k)}_{ V^{\delta,p};[0,T]}(\X^1,\X^2),
\end{equation*}
for $\X^1,\X^2 \in V^{\delta,p}([0,T];G^{\lfloor 1/\delta \rfloor}(\mathbb{R}^n))$, where we set 
\begin{equation*}
  \rho^{(k)}_{V^{\delta,p};[s,t]}(\X^1,\X^2)
  := \sup_{\mathcal{P}\subset [s,t]} \bigg (\sum_{[u,v] \in \mathcal{P}} \frac{ |\pi_k(\X^1_{u,v}-\X^2_{u,v})|^{\frac{p}{k}}}{|u-v|^{\delta p-1}} \bigg)^{\frac{k}{p}}, \quad [s,t]\subset [0,T],
\end{equation*}
for $k=1,\dots,\lfloor 1/\delta \rfloor$. Indeed, the inhomogeneous Riesz type distance and inhomogeneous mixed H\"older-variation distance are equivalent.

\begin{lemma}\label{lem:equivalence Riesz distance}
  If $\delta \in (0,1)$ and $p \in (1,\infty)$ with $\delta > 1/p$, then 
  \begin{equation*}
    \rho_{V^{\delta,p}}(\X^1,\X^2) \lesssim \rho_{\tilde V^{\delta,p}}(\X^1,\X^2)\lesssim \rho_{V^{\delta,p}}(\X^1,\X^2)
  \end{equation*}
  for $\X^1,\X^2 \in V^{\delta,p}([0,T];G^{\lfloor 1/\delta \rfloor}(\mathbb{R}^n))$.
\end{lemma}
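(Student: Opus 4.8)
The plan is to prove the two inequalities separately, in each case level-by-level over $k=1,\dots,\lfloor 1/\delta\rfloor$, thereby reducing everything to a single-level statement about the increments $\chi^{(k)}(u,v):=|\pi_k(\X^1_{u,v}-\X^2_{u,v})|$. Observe that for a fixed level $k$ the quantity $\rho^{(k)}_{V^{\delta,p};[s,t]}$ is exactly the Riesz type variation $\|\cdot\|_{V^{\delta',p'}}$-style functional, but built from the ``multiplicative'' object $\chi^{(k)}$ rather than a genuine metric increment; similarly $\rho^{(k)}_{\tilde V^{\delta,p};[s,t]}$ is the mixed H\"older--variation functional built from $\rho^{(k)}_{1/\delta\var;[\cdot,\cdot]}$. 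So the content of the lemma at each level is the analogue of Theorem~\ref{thm:riesz characterization} with $d(f_u,f_v)$ replaced by $\chi^{(k)}(u,v)$ and $\|f\|_{1/\delta\var;[u,v]}$ replaced by $\rho^{(k)}_{1/\delta\var;[u,v]}(\X^1,\X^2)$.

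First I would establish $\rho_{V^{\delta,p}}\lesssim\rho_{\tilde V^{\delta,p}}$, which is the easy direction: for any interval $[u,v]$ one trivially has $\chi^{(k)}(u,v)=|\pi_k(\X^1_{u,v}-\X^2_{u,v})|\le \rho^{(k)}_{1/\delta\var;[u,v]}(\X^1,\X^2)$ (a one-term partition of $[u,v]$ raised to the power $k/q$ dominates a single increment), hence summing $\chi^{(k)}(u,v)^{p/k}/|v-u|^{\delta p-1}$ over a partition is bounded by the corresponding sum with $\rho^{(k)}_{1/\delta\var;[u,v]}(\X^1,\X^2)^{p/k}$, and taking suprema gives $\rho^{(k)}_{V^{\delta,p};[0,T]}\le\rho^{(k)}_{\tilde V^{\delta,p};[0,T]}$ at once. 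Maximising over $k$ finishes this half.

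For the reverse inequality $\rho_{\tilde V^{\delta,p}}\lesssim\rho_{V^{\delta,p}}$ the key is a Proposition~\ref{prop:variation embeddings}-type estimate at each level: namely that
\begin{equation*}
  \rho^{(k)}_{1/\delta\var;[u,v]}(\X^1,\X^2)\lesssim \rho^{(k)}_{V^{\delta,p};[u,v]}(\X^1,\X^2)\,|v-u|^{(\delta-1/p)k}.
\end{equation*}
Granting this, one substitutes it into the definition of $\rho^{(k)}_{\tilde V^{\delta,p};[s,t]}$: the factor $|v-u|^{(\delta-1/p)k\cdot(p/k)}=|v-u|^{\delta p-1}$ cancels the denominator, and what remains is $\sum_{[u,v]\in\mathcal P}\rho^{(k)}_{V^{\delta,p};[u,v]}(\X^1,\X^2)^{p/k}$, which is bounded by $\rho^{(k)}_{V^{\delta,p};[0,T]}(\X^1,\X^2)^{p/k}$ precisely because $(s,t)\mapsto\rho^{(k)}_{V^{\delta,p};[s,t]}(\X^1,\X^2)^{p/k}$ is super-additive on $\Delta_T$ (it is a sup over partitions of a sum of non-negative terms). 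Taking the $k/p$ power and then the maximum over $k$ gives the claim. The per-level Garsia--Rodemich--Rumsey estimate itself is obtained exactly as in the proof of Proposition~\ref{prop:variation embeddings}: one first bounds a single increment $\chi^{(k)}(s,t)$ by $\rho^{(k)}_{V^{\delta,p};[s,t]}(\X^1,\X^2)|t-s|^{\delta-1/p}$ via the one-term-partition argument (the analogue of \eqref{eq:holder estimate riesz}), then notes that $(s,t)\mapsto\rho^{(k)}_{V^{\delta,p};[s,t]}(\X^1,\X^2)^{p/k}$ and $|t-s|$ are super-additive with exponents summing to at least $1$, so their product raised to the relevant power is super-additive, and finally reads off the $1/\delta$-variation bound for $\chi^{(k)}$ from super-additivity of that control.

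The main obstacle, and the reason this is a lemma rather than a triviality, is that $\chi^{(k)}(u,v)=|\pi_k(\X^1_{u,v}-\X^2_{u,v})|$ is \emph{not} the increment of a path in a metric space — it fails to satisfy any clean sub-additivity/triangle inequality across concatenated intervals, since $\X_{u,v}$ is a group-like element and $\pi_k$ picks out a polynomial combination of lower-level increments. So one cannot simply invoke Theorem~\ref{thm:riesz characterization} as a black box. The resolution is that the only structural facts actually used above are (i) the trivial one-term-partition domination $\chi^{(k)}(u,v)\le\rho^{(k)}_{1/\delta\var;[u,v]}(\X^1,\X^2)$, and (ii) the definition of $\rho^{(k)}_{1/\delta\var}$ and $\rho^{(k)}_{V^{\delta,p}}$ themselves as suprema of sums over partitions, which makes the relevant $p/k$-th powers automatically super-additive in $(s,t)$. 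Thus the whole argument goes through verbatim at the level of these ``$\rho^{(k)}$''-functionals without ever needing $\chi^{(k)}$ to behave like a metric increment, and the constants are uniform in the parameter, justifying the $\lesssim$ notation. This is exactly the sense in which, as the introduction promised, the new Besov--Nikolskii estimates are extracted from the classical $q$-variation machinery.
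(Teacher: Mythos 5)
Your proposal is correct and follows essentially the same route as the paper's proof: the easy direction via the one-term-partition domination $|\pi_k(\X^1_{u,v}-\X^2_{u,v})|\le\rho^{(k)}_{1/\delta\var;[u,v]}(\X^1,\X^2)$, and the reverse direction via the per-level bound $\rho^{(k)}_{1/\delta\var;[u,v]}(\X^1,\X^2)\le\rho^{(k)}_{V^{\delta,p};[u,v]}(\X^1,\X^2)\,|v-u|^{(\delta-1/p)k}$ (obtained from Remark~\ref{rmk:supper additive}) combined with the super-additivity of $(u,v)\mapsto\rho^{(k)}_{V^{\delta,p};[u,v]}(\X^1,\X^2)^{p/k}$. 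Two cosmetic remarks: the key per-level estimate is proved by the elementary argument of Proposition~\ref{prop:riesz interpolation} rather than by Garsia--Rodemich--Rumsey as your wording suggests (Proposition~\ref{prop:variation embeddings} is not needed here), and in one sentence you wrote the exponent $\delta-1/p$ where $(\delta-1/p)k$ is meant -- your displayed inequality has it right.
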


\begin{proof}
  The first inequality directly follows from
  \begin{equation*}
    |\pi_k (\X^1_{u,v}-\X^2_{u,v})|^{\frac{p}{k}}\leq \rho^{(k)}_{1/\delta\var;[u,v]} (\X^1,\X^2)^{\frac{p}{k}},\quad [u,v]\subset [0,T],
  \end{equation*}
  for $k=1,\dots,\lfloor 1/\delta \rfloor$.
  
  For the second inequality we notice 
  \begin{align*}
    |\pi_k (\X^1_{u,v}-\X^2_{u,v})|^{\frac{1}{\delta k}}
    &\leq \bigg(\frac{|\pi_k (\X^1_{u,v}-\X^2_{u,v})|^{\frac{p}{k}}}{|u-v|^{\delta p -1}}\bigg)^{\frac{1}{\delta p}}|u-v|^{1-\frac{1}{\delta p}}\\
    &\leq \rho_{V^{\delta,p};[u,v]}^{(k)}(\X^1,\X^2)^{\frac{1}{\delta k}}|u-v|^{1-\frac{1}{\delta p}},\quad [u,v]\subset [0,T],
  \end{align*}
  for $k=1,\dots,\lfloor 1/\delta \rfloor$. Due to Remark~\ref{rmk:supper additive} the function
  \begin{equation*}
    \Delta_T\ni (u,v)\mapsto \rho_{V^{\delta,p};[u,v]}^{(k)}(\X^1,\X^2)^{\frac{1}{\delta k}}|u-v|^{1-\frac{1}{\delta p}}
  \end{equation*}
  is super-additive and thus we get
  \begin{equation*}
    \rho^{(k)}_{1/\delta \var;[u,v]}(\X^1,\X^2)^{\frac{p}{k}} \leq \rho^{(k)}_{V^{\delta,p};[u,v]}(\X^1,\X^2) |u-v|^{\delta p-1}.
  \end{equation*}
  Therefore, using the super-additive of $\rho^{(k)}_{V^{\delta,p};[u,v]}(\X^1,\X^1)^{\frac{p}{k}}$ as functions in $(u,v)\in \Delta_T$, we obtain 
  \begin{equation*}
    \rho^{(k)}_{\tilde V^{\delta ,p};[u,v]}(\X^1,\X^2)^{\frac{p}{k}} \leq \rho^{(k)}_{V^{\delta,p};[u,v]}(\X^1,\X^2)^\frac{p}{k}
  \end{equation*}
  for $k=1,\dots,\lfloor 1/\delta \rfloor$, which implies the second inequality.
\end{proof}

Applying the equivalence of the inhomogeneous distances $\rho_{V^{\delta, p}}$ and $\rho_{\tilde V{\delta, p}}$ (Lemma~\ref{lem:equivalence Riesz distance})  and the characterization of Riesz type spaces (Theorem~\ref{thm:riesz characterization}), the local Lipschitz continuity of the It\^o-Lyons map~\eqref{eq:ito lyons map} with respect to $\rho_{V^{\delta, p}}$ is an immediate consequence of Theorem~\ref{thm:ito map riesz variation}:

\begin{corollary}\label{cor:ito map riesz type norm}
  Let $\delta \in (0,1)$ and $\gamma, p \in (1,\infty)$ be such that $\delta > 1/p$ and $\gamma >1/\delta$. 
  
  The It\^o-Lyons map  
  \begin{equation*}
    \Phi \colon  \mathbb{R}^m \times \Lip^{\gamma} \times V^{\delta,p}([0,T];G^{\lfloor 1/\delta \rfloor}(\mathbb{R}^n)) \to V^{\delta,p}([0,T];\mathbb{R}^m) \quad \text{via} \quad \Phi(y_0,V,\X):=Y,
  \end{equation*}
  where $Y$ denotes the solution to controlled differential equation~\eqref{eq:rough differential equation} given the input $(y_0,V,\X)$, is locally Lipschitz continuous with respect to inhomogeneous Riesz type distance $\rho_{V^{\delta,p}}$.
\end{corollary}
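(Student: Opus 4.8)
The plan is to obtain the corollary as an immediate consequence of Theorem~\ref{thm:ito map riesz variation}, using Theorem~\ref{thm:riesz characterization} to identify the two path spaces involved and Lemma~\ref{lem:equivalence Riesz distance} to transfer the Lipschitz estimate from the mixed H\"older--variation distance $\rho_{\tilde V^{\delta,p}}$ to the Riesz type distance $\rho_{V^{\delta,p}}$.

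First I would observe that, since $\delta>1/p$, Theorem~\ref{thm:riesz characterization} applied to the metric space $(G^{\lfloor 1/\delta\rfloor}(\mathbb{R}^n),d_{cc})$ and to $(\mathbb{R}^m,|\cdot|)$ gives
\begin{equation*}
  V^{\delta,p}([0,T];G^{\lfloor 1/\delta\rfloor}(\mathbb{R}^n)) = \tilde V^{\delta,p}([0,T];G^{\lfloor 1/\delta\rfloor}(\mathbb{R}^n)), \qquad V^{\delta,p}([0,T];\mathbb{R}^m)=\tilde V^{\delta,p}([0,T];\mathbb{R}^m),
\end{equation*}
with $\|\cdot\|_{V^{\delta,p}}\approx\|\cdot\|_{\tilde V^{\delta,p}}$ on each, constants depending only on $(\delta,p)$. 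Hence the map $\Phi$ in the statement coincides with the It\^o--Lyons map~\eqref{eq:ito lyons map}, and existence and uniqueness of $Y\in V^{\delta,p}([0,T];\mathbb{R}^m)$ is already part of Theorem~\ref{thm:ito map riesz variation}.

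Next, fix $y_0^i\in\mathbb{R}^m$, $\X^i\in V^{\delta,p}([0,T];G^{\lfloor 1/\delta\rfloor}(\mathbb{R}^n))$ and $V^i\in\Lip^{\gamma}$ with $\|\X^i\|_{V^{\delta,p}}\le b$ and $\|V^i\|_{\Lip^{\gamma}}\le l$. By the norm equivalence just recalled there is $b'=b'(b,\delta,p)$ with $\|\X^i\|_{\tilde V^{\delta,p}}\le b'$, so Theorem~\ref{thm:ito map riesz variation} applies with bound $b'$ and yields, for some $C'=C'(b',l,\gamma,\delta,p)$,
\begin{equation*}
  \|Y^1-Y^2\|_{\tilde V^{\delta,p}} \le C'\big(\|V^1-V^2\|_{\Lip^{\gamma-1}} + |y_0^1-y_0^2| + \rho_{\tilde V^{\delta,p}}(\X^1,\X^2)\big).
\end{equation*}
On the left I would estimate $\|Y^1-Y^2\|_{V^{\delta,p}}\lesssim\|Y^1-Y^2\|_{\tilde V^{\delta,p}}$ (again Theorem~\ref{thm:riesz characterization}), and on the right $\rho_{\tilde V^{\delta,p}}(\X^1,\X^2)\lesssim\rho_{V^{\delta,p}}(\X^1,\X^2)$ (Lemma~\ref{lem:equivalence Riesz distance}). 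Chaining the three estimates gives the asserted bound with a constant $C=C(b,l,\gamma,\delta,p)$.

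All ingredients being in place, there is no genuinely hard step; the only thing to watch is the bookkeeping of constants — in particular, that converting the ball $\{\|\X\|_{V^{\delta,p}}\le b\}$ into a ball in $\tilde V^{\delta,p}$ only enlarges the radius by a factor depending on $(\delta,p)$, so that the final Lipschitz constant still depends only on $b,l,\gamma,\delta,p$ and the statement is genuinely \emph{local} (uniform on bounded sets) rather than merely continuous.
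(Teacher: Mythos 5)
Your proposal is correct and follows exactly the paper's route: the corollary is deduced from Theorem~\ref{thm:ito map riesz variation} by combining the space/norm identification of Theorem~\ref{thm:riesz characterization} with the distance equivalence of Lemma~\ref{lem:equivalence Riesz distance}, applied in the same directions you use. Your extra remark on tracking the radius $b\mapsto b'$ so the final constant depends only on $(b,l,\gamma,\delta,p)$ is a correct piece of bookkeeping the paper leaves implicit.
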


\subsection{Inhomogeneous Nikolskii type distance}
 
To complete the picture, we provide in this subsection an inhomogeneous distance in terms of Nikolskii regularity (cf.~\eqref{eq:nikolskii norm integral}), which is locally Lipschitz equivalent to the inhomogeneous Riesz type distance and which ensures the local Lipschitz continuity of the It\^o-Lyons map $\Phi$ as defined in~\eqref{eq:ito lyons map}. \medskip

To that end we introduce the inhomogeneous Nikolskii type distance as follows: For $\X^1,\X^2 \in \hat N^{\delta, p}([0,T];G^{\lfloor 1/\delta \rfloor}(\mathbb{R}^n))$ we set 
\begin{equation*}
  \rho_{N^{\delta,p};[u,v]}^{(k)}(\X^1,\X^2):=\sup_{|v-u|\geq h>0} h^{-\delta k}\bigg ( \int_u^{v-h} |\pi_k (\X^1_{r,r+h}-\X^2_{r,r+h})|^{\frac{p}{k}}\dd r \bigg)^{\frac{k}{p}},\quad [u,v]\subset [0,T],
\end{equation*}
and 
\begin{equation*}
  \rho_{\hat N^{\delta,p};[s,t]}^{(k)}(\X^1,\X^2):=\sup_{\mathcal{P}\subset [s,t]} \bigg(\sum_{[u,v]\in\mathcal{P}} \rho_{N^{\delta,p};[u,v]}^{(k)}(\X^1,\X^2)^\frac{p}{k}\bigg)^{\frac{k}{p}},\quad [s,t]\subset [0,T],
\end{equation*}
for $k=1,\dots,\lfloor 1/\delta \rfloor$. The \textit{inhomogeneous Nikolskii type distance} $\rho_{\hat N^{\delta,p}}$ is defined by
\begin{equation*}
  \rho_{\hat N^{\delta,p}}(\X^1,\X^2):= \max_{k=1,\dots,N}\rho^{(k)}_{\hat N^{\delta,p};[0,T]}(\X^1,\X^2).
\end{equation*}

In the next two lemmas (Lemma~\ref{lem:nikolskii distance 1} and~\ref{lem:nikolskii distance 2}) we establish that the two ways of introducing an inhomogeneous distance on $C([0,T];G^{\lfloor 1/\delta \rfloor}(\mathbb{R}^n))$, given by the inhomogeneous mixed H\"older-variation distance and the inhomogeneous Nikolskii type distance, are locally equivalent. 

\begin{lemma}\label{lem:nikolskii distance 1}
  If $\delta \in (0,1)$ and $p \in (1,\infty)$ with $\delta > 1/p$, then
  \begin{equation*}
    \rho_{\hat N^{\delta,p}}(\X^1,\X^2) \lesssim \rho_{\tilde V^{\delta,p}}(\X^1,\X^2)
  \end{equation*}
  for $\X^1,\X^2 \in \tilde V^{\delta,p}([0,T];G^{\lfloor 1/\delta \rfloor}(\mathbb{R}^n))$.
\end{lemma}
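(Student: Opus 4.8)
The statement is the inhomogeneous, tensor-level counterpart of the bound $\|f\|_{\hat N^{\delta,p}} \lesssim \|f\|_{\tilde V^{\delta,p}}$ established inside the proof of Theorem~\ref{thm:riesz characterization}, so the plan is to transcribe that argument levelwise, with $d(f_u,f_v)$ replaced by $|\pi_k(\X^1_{u,v}-\X^2_{u,v})|$ for each $k=1,\dots,\lfloor 1/\delta\rfloor$. Fixing $k$, I would first reduce everything to the local estimate
\begin{equation*}
  \rho^{(k)}_{N^{\delta,p};[u,v]}(\X^1,\X^2)^{p/k} \lesssim \rho^{(k)}_{\tilde V^{\delta,p};[u,v]}(\X^1,\X^2)^{p/k}, \qquad [u,v]\subset[0,T],
\end{equation*}
with implied constant depending only on $\delta,p$: granting it, summing over an arbitrary partition $\mathcal{P}$ of $[0,T]$ and invoking the super-additivity of $(s,t)\mapsto \rho^{(k)}_{\tilde V^{\delta,p};[s,t]}(\X^1,\X^2)^{p/k}$ (immediate from its definition as a supremum over partitions) gives $\sum_{[u,v]\in\mathcal{P}}\rho^{(k)}_{N^{\delta,p};[u,v]}(\X^1,\X^2)^{p/k}\lesssim \rho^{(k)}_{\tilde V^{\delta,p};[0,T]}(\X^1,\X^2)^{p/k}$; taking the supremum over $\mathcal{P}$, the $k/p$-th power and finally the maximum over $k$ yields the claim.

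For the local estimate I would fix $[u,v]$ and $h\in(0,v-u]$ and introduce the grid $u=r_0<r_1<\dots<r_M=v-h$ with $r_{j+1}-r_j=h$ for $j\le M-2$ and $r_M-r_{M-1}\le h$, exactly as in Theorem~\ref{thm:riesz characterization}. The one elementary fact that has to be recorded is that $[r,r+h]\subseteq[a,b]$ implies $|\pi_k(\X^1_{r,r+h}-\X^2_{r,r+h})|\le \rho^{(k)}_{1/\delta\var;[a,b]}(\X^1,\X^2)$: one evaluates the supremum defining the right-hand side on a partition of $[a,b]$ having $r$ and $r+h$ among its gridpoints and keeps only the $[r,r+h]$-term. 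Applying this with $[a,b]=[r_j,r_{j+2}]$ for $r\in[r_j,r_{j+1}]$ (with the usual replacement of $r_{j+2}$ by $v$ near the right endpoint, which enlarges the companion interval only by a bounded multiple of $h$) bounds $\int_u^{v-h}|\pi_k(\X^1_{r,r+h}-\X^2_{r,r+h})|^{p/k}\dd r$ by $\sum_j h\,\rho^{(k)}_{1/\delta\var;[r_j,r_{j+2}]}(\X^1,\X^2)^{p/k}$; rewriting $h \le 2^{\delta p-1}h^{\delta p}|r_{j+2}-r_j|^{1-\delta p}$ via $|r_{j+2}-r_j|\le 2h$ and $\delta p>1$, and then splitting the index set into even and odd $j$ so that each sub-family $\{[r_j,r_{j+2}]\}$ is a sub-partition of $[u,v]$, one arrives at $\int_u^{v-h}|\pi_k(\X^1_{r,r+h}-\X^2_{r,r+h})|^{p/k}\dd r\lesssim h^{\delta p}\,\rho^{(k)}_{\tilde V^{\delta,p};[u,v]}(\X^1,\X^2)^{p/k}$. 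Multiplying by $h^{-\delta p}$ and taking the supremum over $h\in(0,v-u]$ then gives the local estimate.

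I expect no genuine obstacle beyond the two bookkeeping points already present in Theorem~\ref{thm:riesz characterization}: the overlap of consecutive companion intervals $[r_j,r_{j+2}]$, which is dealt with by the even/odd decomposition (costing a factor $2$), and the terminal sub-interval of the $h$-grid, which has length $\le h$ rather than $=h$ and forces the minor enlargement of the companion interval near $v$. No rough-path input is needed beyond the elementary projection inequality above; the argument uses only that each $\pi_k(\X^1_{u,v}-\X^2_{u,v})$ lives in a normed space.
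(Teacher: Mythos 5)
Your proposal is correct and follows essentially the same route as the paper: the same $h$-grid $u=r_0<\dots<r_M=v-h$ with companion intervals $[r_j,r_{j+2}]$, the same elementary projection bound $|\pi_k(\X^1_{r,r+h}-\X^2_{r,r+h})|\le\rho^{(k)}_{1/\delta\var;[r_j,r_{j+2}]}(\X^1,\X^2)$, the same use of $\delta p>1$ to trade $h$ for $h^{\delta p}|r_{j+2}-r_j|^{1-\delta p}$, and the same final summation via super-additivity of $\rho^{(k)}_{\tilde V^{\delta,p};[\cdot,\cdot]}(\X^1,\X^2)^{p/k}$. Your even/odd splitting and the explicit treatment of the terminal subinterval merely make precise the bookkeeping the paper absorbs into its $\lesssim$.
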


\begin{proof}
  Let $\X^1,\X^2 \in \tilde V^{\delta,p}([0,T];G^{\lfloor 1/\delta \rfloor}(\mathbb{R}^n))$ and $k=1,\dots,N$. For $[s,t]\subset [0,T]$ and $h\in (0,t-s]$ fixed we consider the dissection $\mathcal{P}(h):=\{[t_i,t_{i+1}]\,:\, s=t_0 < \dots < t_M=t-h\}$ such that 
  \begin{equation*}
    |t_M -t_{M-1}|\leq h\quad \text{and} \quad |t_{i+1}-t_i | = h \quad\text{for } i=0,\dots,M-2,\quad M\in \mathbb{N}.
  \end{equation*}
  Since $\sup_{u\in [t_i,t_{i+1}]}|\pi_k (\X^1_{u,u+h}-\X^2_{u,u+h})|\leq \rho_{1/\delta\var;[t_i,t_{i+2}]}(\X^1,\X^2)$ for $i=1,\dots, M-1$ with $t_{M+1}:=t-h$, we deduce that 
  \begin{align*}
    \int_s^{t-h}|\pi_k (\X^1_{u,u+h}-\X^2_{u,u+h})|^{\frac{p}{k}}\dd u  &\leq \sum_{i=1}^{M-1} \sup_{u\in [t_i,t_{i+1}]}|\pi_k (\X^1_{u,u+h}-\X^2_{u,u+h})|^{\frac{p}{k}} h\\
    &\leq \frac{1}{2} (2h)^{\delta p} \sum_{i=1}^{M-1} \frac{\rho_{1/\delta\var;[t_i,t_{i+2}]}(\X^1,\X^2)^{\frac{p}{k}}}{|t_i-t_{i+2}|^{\delta p-1}}\\ 
    &\lesssim h^{\delta p}\rho_{\tilde V^{\delta,p};[s,t]}^{(k)}(\X^1,\X^2)^{\frac{p}{k}}.
  \end{align*}
  Therefore, by the super-additivity of $\rho_{\hat N^{\delta,p};[s,t]}^{(k)}(\X^1,\X^2)^{\frac{p}{k}}$ and $\rho_{\tilde V^{\delta,p};[s,t]}^{(k)}(\X^1,\X^2)^{\frac{p}{k}}$ as functions in $(s,t)\in \Delta_T$ we obtain 
  \begin{equation*}
    \rho_{\hat N^{\delta,p};[0,T]}^{(k)}(\X^1,\X^2) \lesssim \rho_{\tilde V^{\delta,p};[0,T]}^{(k)}(\X^1,\X^2)
  \end{equation*}
  for every $k=1,\dots,\lfloor 1/\delta \rfloor$, which implies that $\rho_{\hat N^{\delta,p}}(\X^1,\X^2) \lesssim \rho_{\tilde V^{\delta,p}}(\X^1,\X^2)$.
\end{proof}

\begin{lemma}\label{lem:nikolskii distance 2}
  Let $\delta \in (0,1)$ and $p \in (1,\infty)$ with $\delta > 1/p$.  For Riesz type geometric rough paths $\X^1,\X^2 \in \hat N^{\delta,p}([0,T];G^{\lfloor 1/\delta \rfloor}(\mathbb{R}^n))$ there exists a constant 
  \begin{equation*}
    C:=C\big(\delta, p, \|\X^1\|_{\hat N^{\delta,p}},\|\X^2\|_{\hat N^{\delta,p}}\big)\geq 1,
  \end{equation*}
  depending only on $\delta$, $p$ and the upper bound of $\|\X^1\|_{\hat N^{\delta,p}}$ and $\|\X^2\|_{\hat N^{\delta,p}}$,
  such that 
  \begin{equation*}
    \rho_{\tilde V^{\delta,p}}(\X^1,\X^2) \leq C \rho_{\hat N^{\delta,p}}(\X^1,\X^2).
  \end{equation*}
\end{lemma}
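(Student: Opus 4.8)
\section*{Proof proposal}

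The plan is to reduce the assertion, by means of Lemma~\ref{lem:equivalence Riesz distance} and Theorem~\ref{thm:riesz characterization}, to a level-by-level pointwise bound for the increments $R^{(k)}_{s,t}:=\pi_k(\X^1_{s,t}-\X^2_{s,t})$, $k=1,\dots,N:=\lfloor 1/\delta\rfloor$. Fix an upper bound $R$ for $\|\X^1\|_{\hat N^{\delta,p}}$ and $\|\X^2\|_{\hat N^{\delta,p}}$, write $D:=\rho_{\hat N^{\delta,p}}(\X^1,\X^2)$, and abbreviate $A\lesssim_R B$ for $A\leq C(\delta,p,R)\,B$. It suffices to produce a super-additive control $\omega$ with $\omega(0,T)\lesssim_R 1$ and constants $C_k=C_k(\delta,p,R)$ such that $|R^{(k)}_{s,t}|\leq C_k\,D\,\omega(s,t)^{\delta k}$ for all $(s,t)\in\Delta_T$ and all $k$. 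Indeed, from such a bound $|R^{(k)}_{u,v}|^{p/k}/|v-u|^{\delta p-1}\leq (C_kD)^{p/k}\,\omega(u,v)^{\delta p}/|v-u|^{\delta p-1}$; summing over a partition of $[0,T]$ and taking the supremum gives $\rho^{(k)}_{V^{\delta,p};[0,T]}(\X^1,\X^2)\lesssim_R D$ provided $\sup_{\mathcal P\subset[0,T]}\sum_{[u,v]\in\mathcal P}\omega(u,v)^{\delta p}/|v-u|^{\delta p-1}\lesssim_R 1$; maximising over $k$ and invoking Lemma~\ref{lem:equivalence Riesz distance} then yields $\rho_{\tilde V^{\delta,p}}(\X^1,\X^2)\lesssim_R D$, which is the claim.

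For the control I would take $\omega:=\omega_1+\omega_2+\sum_{k=1}^N\nu_k$, where $\omega_i(s,t):=\|\X^i\|_{\hat N^{\delta,p};[s,t]}^{1/\delta}\,|t-s|^{1-\frac1{\delta p}}$ and $\nu_k(s,t):=\rho^{(k)}_{\hat N^{\delta,p};[0,T]}(\X^1,\X^2)^{-\frac1{\delta k}}\big(\rho^{(k)}_{\hat N^{\delta,p};[s,t]}(\X^1,\X^2)^{p/k}\big)^{\frac1{\delta p}}\,|t-s|^{1-\frac1{\delta p}}$, with the convention $0/0:=0$. Each summand is super-additive by Remark~\ref{rmk:supper additive}: the functions $(s,t)\mapsto\|\X^i\|_{\hat N^{\delta,p};[s,t]}^p$ and $(s,t)\mapsto\rho^{(k)}_{\hat N^{\delta,p};[s,t]}(\X^1,\X^2)^{p/k}$ are super-additive by their very definitions, $|t-s|$ is super-additive, and the exponents $\tfrac1{\delta p}$ and $1-\tfrac1{\delta p}$ are strictly positive (this is where $\delta>1/p$ enters) and sum to $1$. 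One checks $\omega(0,T)\lesssim_R 1$, and, since $\omega_i(u,v)^{\delta p}/|v-u|^{\delta p-1}=\|\X^i\|^p_{\hat N^{\delta,p};[u,v]}$ and $\nu_k(u,v)^{\delta p}/|v-u|^{\delta p-1}=\rho^{(k)}_{\hat N^{\delta,p};[0,T]}(\X^1,\X^2)^{-p/k}\,\rho^{(k)}_{\hat N^{\delta,p};[u,v]}(\X^1,\X^2)^{p/k}$, superadditivity of the right-hand sides gives $\sup_{\mathcal P}\sum_{[u,v]\in\mathcal P}\omega(u,v)^{\delta p}/|v-u|^{\delta p-1}\lesssim_R 1$, as required above.

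The pointwise bounds are proved by induction on $k$. For $k=1$ the object $\pi_1\X^1-\pi_1\X^2$ is an honest $\R^n$-valued path with increment $R^{(1)}_{s,t}$ over $[s,t]$, so Proposition~\ref{prop:variation embeddings}\,(2) (equivalently Theorem~\ref{thm:riesz characterization}) gives $|R^{(1)}_{s,t}|\leq\|\pi_1\X^1-\pi_1\X^2\|_{1/\delta\var;[s,t]}\lesssim\|\pi_1\X^1-\pi_1\X^2\|_{\hat N^{\delta,p};[s,t]}\,|t-s|^{\delta-1/p}=\rho^{(1)}_{\hat N^{\delta,p};[0,T]}(\X^1,\X^2)\,\nu_1(s,t)^{\delta}\leq D\,\omega(s,t)^{\delta}$. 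Applying Proposition~\ref{prop:variation embeddings}\,(2) to $\X^i$ regarded as a $(G^{\lfloor 1/\delta\rfloor}(\R^n),d_{cc})$-valued path gives $\|\X^i\|_{1/\delta\var;[s,t]}\lesssim\omega_i(s,t)^{\delta}$ and hence, by homogeneity of $d_{cc}$, $|\pi_m(\X^i_{s,t})|\lesssim\omega(s,t)^{\delta m}$; in particular $|R^{(k)}_{s,t}|\lesssim_R\omega(s,t)^{\delta k}$ a priori. For the step $k\geq 2$, Chen's relation gives $R^{(k)}_{s,t}=R^{(k)}_{s,u}+R^{(k)}_{u,t}+\Lambda^{(k)}_{s,u,t}$ with $\Lambda^{(k)}_{s,u,t}=\sum_{j=1}^{k-1}\big(R^{(j)}_{s,u}\otimes\pi_{k-j}(\X^1_{u,t})+\pi_j(\X^2_{s,u})\otimes R^{(k-j)}_{u,t}\big)$, and the inductive hypothesis together with the increment bounds yields $|\Lambda^{(k)}_{s,u,t}|\lesssim_R D\sum_{j=1}^{k-1}\omega(s,u)^{\delta j}\,\omega(u,t)^{\delta(k-j)}$. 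From this almost-additivity and the a priori bound one passes to $|R^{(k)}_{s,t}|\leq C_k D\,\omega(s,t)^{\delta k}$ by the standard Lyons-type continuity estimate for level-$k$ increments of differences of weakly geometric rough paths controlled by a common $\omega$ (the circle of estimates from \cite{Friz2010} underlying the proof of Theorem~\ref{thm:ito map riesz variation}, applied here on $[0,T]$ rather than to the solution path). Super-additivity of $\omega$ and the first paragraph then finish the proof.

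The delicate point, on which I would spend most care, is this last pointwise level-$k$ estimate. A na\"ive bisection of $[s,t]$ at the $\omega$-midpoint does not close: it would demand $2^{1-\delta k}<1$, i.e.\ $\delta k>1$, whereas here $\delta k\leq\delta\lfloor 1/\delta\rfloor\leq 1$ at \emph{every} level (and $\delta k<1$ throughout when $\delta<1/2$). One must therefore use Lyons' continuity estimate in its scale-relative form, comparing $\X^1$ and $\X^2$ at the \emph{common} control scale $\omega$ so that the relative discrepancy is not amplified under dyadic refinement; this is exactly the mechanism by which Lyons' continuity theorem works for arbitrary $q$. The only other non-routine ingredient is the exponent bookkeeping in the definition of $\omega$, forced by, and valid only under, the standing hypothesis $\delta>1/p$.
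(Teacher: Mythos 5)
Your overall architecture (reduce to a pointwise level-$k$ bound $|\pi_k(\X^1_{s,t}-\X^2_{s,t})|\lesssim_R D\,\omega(s,t)^{\delta k}$ against a super-additive control built from $\|\X^i\|_{\hat N^{\delta,p};[\cdot,\cdot]}$ and the normalized level-wise distances, then sum over partitions and invoke Lemma~\ref{lem:equivalence Riesz distance}) matches the paper, as does the Chen-relation decomposition and the induction over levels. The gap is in the one step you yourself flag as delicate: you cannot obtain the level-$k$ pointwise bound from ``almost-additivity of $R^{(k)}$ with defect $\lesssim D\sum_j\omega(s,u)^{\delta j}\omega(u,t)^{\delta(k-j)}$ plus an a priori bound'' by any Lyons-type continuity estimate, scale-relative or not. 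For $k\le\lfloor 1/\delta\rfloor$ one has $\delta k\le 1$, so no sewing/bisection mechanism can recover $R^{(k)}$ from its additivity defect; more fundamentally, the defect only sees the levels $j\le k-1$ of the difference, and one may perturb $\X^2$ at level $k$ by an arbitrary additive term $(s,t)\mapsto\phi_t-\phi_s$ without changing the defect at all. In that situation your claimed implication would assert $|\phi_t-\phi_s|\lesssim \rho^{(k)}_{\hat N^{\delta,p};[s,t]}(\X^1,\X^2)\,|t-s|^{(\delta-1/p)k}$, which is a genuine Nikolskii-to-H\"older embedding statement at level $k$, not a consequence of almost-additivity. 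Lyons' continuity theorem does not supply it either: there, pointwise closeness $|\pi_j(\X^1_{s,t}-\X^2_{s,t})|\le\varepsilon\,\omega(s,t)^{\delta j}$ for \emph{all} $j\le\lfloor 1/\delta\rfloor$ is the hypothesis (it is built into $\rho_{1/\delta\var}$), and only higher levels, integrals and RDE solutions are conclusions. Here the whole content of the lemma is to \emph{derive} that pointwise closeness from integrated (Nikolskii) closeness, so your appeal to the ``standard Lyons-type continuity estimate'' is circular at exactly the point where the level-$k$ component of $\rho_{\hat N^{\delta,p}}$ has to enter.

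What the paper does instead, and what is missing from your proposal, is an embedding argument for the non-path object at level $k$: fix $[s,t]$, set $Z^s_u:=\pi_k(\X^1_{s,s+u}-\X^2_{s,s+u})$, expand $Z^s_{u+h}-Z^s_u$ by Chen's relation, and estimate the Nikolskii seminorm of $Z^s$ by three terms $\Delta_1+\Delta_2+\Delta_3$: the cross terms $\Delta_1,\Delta_2$ are handled by the induction hypothesis and Proposition~\ref{prop:variation embeddings}, while the pure level-$k$ term $\Delta_3$ is interpolated as $\rho^{(k)}_{\hat N^{\delta,p};[s,t]}(\X^1,\X^2)^{1/k}\,\rho^{(k)}_{(\delta-1/p)\hol;[s,t]}(\X^1,\X^2)^{1-1/k}\,|t-s|^{(\delta-1/p)(k-1)}$, deliberately introducing the unknown H\"older quantity with exponent $1-1/k<1$. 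Applying the Garsia--Rodemich--Rumsey-based embedding (Proposition~\ref{prop:variation embeddings}(1)) to the Euclidean-valued path $Z^s$ then yields a self-improving inequality for the ratio $\rho^{(k)}_{(\delta-1/p)\hol}/\tilde\omega^{(k)}$, which is closed by absorption precisely because the unknown appears with a strictly sublinear power. This absorption step is the key idea that replaces the (unavailable) sewing argument in the regime $\delta k\le 1$; without it, or an equivalent mechanism converting the level-$k$ Nikolskii information into pointwise control, your induction does not close.
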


\begin{proof}
  Let $\X^1,\X^2 \in \hat N^{\delta,p}([0,T];G^{\lfloor 1/\delta \rfloor}(\mathbb{R}^n))$ be Riesz type geometric rough paths. In order to prove the inequality in Lemma~\ref{lem:nikolskii distance 2}, it is sufficient to show that there exists a constant $C=C\big(\delta, p, \|\X^1\|_{\hat N^{\delta,p}},\|\X^2\|_{\hat N^{\delta,p}}\big)\geq 1$ such that
  \begin{equation}\label{eq:control each level}
    |\pi_j (\X^1_{s,t}-\X^2_{s,t})|^{\frac{1}{\delta j}} \leq C \bigg( \sum_{i=1}^j \rho_{\hat N^{\delta,p};[s,t]}^{(i)} (\X^1,\X^2)^{\frac{p}{j}}\bigg )^{\frac{1}{\delta p}} |t-s|^{1-\frac{1}{\delta p}}=: \omega^{(j)}(s,t)
  \end{equation}
  for all $s,t\in [0,T]$ with $s<t$ and for every $j=1,\dots,\lfloor 1/\delta \rfloor$.
  
  Indeed, for each $j=1,\dots,\lfloor 1/\delta \rfloor$ the function $\omega^{(j)}\colon \Delta_T\to [0,\infty)$ is the super-additive:
  \begin{align*}
    \omega^{(j)}(s,t)+& \omega^{(j)}(t,u)\\
    &\leq C\bigg( \sum_{i=1}^j \rho_{\hat N^{\delta,p};[s,t]}^{(i)} (\X^1,\X^2)^{\frac{p}{j}}+\rho_{\hat N^{\delta,p};[u,t]}^{(i)} (\X^1,\X^2)^{\frac{p}{j}}\bigg )^{\frac{1}{\delta p}} \big( |t-s| + |u-t|\big)^{1-\frac{1}{\delta p}}\\
    &\leq C \bigg( \sum_{i=1}^j \rho_{\hat N^{\delta,p};[s,u]}^{(i)} (\X^1,\X^2)^{\frac{p}{j}}\bigg )^{\frac{1}{\delta p}}|u-s|^{1-\frac{1}{\delta p}}
    = \omega^{(j)}(s,u), 
  \end{align*}
  for $0\leq s\leq t\leq u\leq T$, where we used H\"older's inequality and $p/j\geq 1$. This implies 
  \begin{align*}
    \rho_{1/\delta\var;[s,t]}^{(j)}(\X^1,\X^2)
    &= \sup_{\mathcal{P}\subset [s,t]} \bigg( \sum_{[u,v]\in \mathcal{P}}|\pi_j(\X^1_{u,v}-\X^2_{u,v})|^\frac{1}{\delta j} \bigg)^{j\delta}\\
    &\leq C \bigg( \sum_{i=1}^j \rho_{\hat N^{\delta,p};[s,t]}^{(i)} (\X^1,\X^2)^{\frac{p}{j}}\bigg )^{\frac{j}{p}}|t-s|^{\frac{j}{p}(\delta p -1)},
  \end{align*}
  where the super-additivity of $\omega^{(j)}$ is applied in last line. Hence, we get further 
  \begin{align*}
    \rho^{(j)}_{\tilde V^{\delta,p};[0,T]}(\X^1,\X^2)
    &= \sup_{\mathcal{P}\subset [0,T]} \bigg( \sum_{[u,v]\in\mathcal{P}} \frac{\rho_{1/\delta\var;[u,v]}^{(j)}(\X^1,\X^2)^{\frac{p}{j}}}{|u-v|^{\delta p -1}} \bigg )^{\frac{j}{p}} \\
    &\leq C  \sup_{\mathcal{P}\subset [0,T]} \bigg( \sum_{[u,v]\in\mathcal{P}} \sum_{i=1}^j \rho_{\hat N^{\delta,p};[u,v]}^{(i)} (\X^1,\X^2)^{\frac{p}{j}} \bigg )^{\frac{j}{p}}
    \leq C \sum_{i=1}^j \rho_{\hat N^{\delta,p};[0,T]}^{(i)} (\X^1,\X^2),
  \end{align*}
  which immediately gives by taking the maximum over $j=1,\dots,\lfloor 1/\delta \rfloor$ that
  \begin{equation*}
    \rho_{\tilde V^{\delta,p}}(\X^1,\X^2) \leq C \rho_{\hat N^{\delta,p}}(\X^1,\X^2).
  \end{equation*}
  
  In order to prove inequality~\eqref{eq:control each level} for each $j=1,\dots,\lfloor 1/\delta \rfloor$, we argue via induction. For $j=1$ inequalities \eqref{eq:control each level} is an easy consequence of Theorem~\ref{thm:riesz characterization}. We now assume that \eqref{eq:control each level} is true for the levels $j=1,\dots,k-1$ and establish the inequality for level $k$.
  Let us fix $s,t\in [0,T]$ with $s<t$ and define 
  \begin{equation*}
    Z^s_u := \pi_k (\X^1_{s,s+u}-\X^2_{s,s+u}),\quad u\in [0,t-s].
  \end{equation*}
  For $u,h\in [0,t-s]$ with $u+h\in [0,t-s]$ and using  
  \begin{align*}
    \X^1_{s,s+u+h}-\X^1_{s,s+u}=\X^1_{s,s+u}\otimes (\X^1_{s+u,s+u+h}-1),\quad \pi_0(\X^1_{s+u,s+u+h}-1)=0 
  \end{align*}  
  and
  \begin{align*}
    \pi_j(\X^1_{s+u,s+u+h}-1)=\pi_j(\X^1_{s+u,s+u+h})\quad \text{ for } j>0,
  \end{align*}
  we obtain 
  \begin{align*}
    Z^s_{u+h}-Z^s_{u}=& \pi_k (\X^1_{s,s+u+h}-\X^1_{s,s+u})-\pi_k (\X^2_{s,s+u+h}-\X^2_{s,s+u}) \\
    = & \sum_{j=1}^{k} \pi_{k-j} (\X^1_{s,s+u})\otimes \pi_j(\X^1_{s+u,s+u+h}) - \sum_{j=1}^{k} \pi_{k-j} (\X^2_{s,s+u})\otimes \pi_j(\X^2_{s+u,s+u+h})\\
    = & \sum_{j=1}^{k} \pi_{k-j} (\X^1_{s,s+u})\otimes \pi_j(\X^1_{s+u,s+u+h}-\X^2_{s+u,s+u+h})\allowdisplaybreaks\\ 
      & + \sum_{j=1}^{k} \pi_{k-j} (\X^1_{s,s+u}-\X^2_{s,s+u})\otimes \pi_j(\X^2_{s+u,s+u+h}).
  \end{align*}
  Keeping in mind $\pi_0(\X^1_{s,s+u}-\X^2_{s,s+u})=0$, we arrive at
  \begin{align*}
    Z^s_{u+h}-Z^s_{u}
    = & \sum_{j=1}^{k-1} \pi_{k-j} (\X^1_{s,s+u})\otimes \pi_j(\X^1_{s+u,s+u+h}-\X^2_{s+u,s+u+h})\\
      & + \sum_{j=1}^{k-1} \pi_{k-j} (\X^1_{s,s+u}-\X^2_{s,s+u})\otimes \pi_j(\X^2_{s+u,s+u+h})\\
      & + \pi_k(\X^1_{s+u,s+u+h}-\X^2_{s+u,s+u+h}).
  \end{align*}
  Hence, we get the following estimate 
  \begin{equation*}
    \sup_{|t-s|\geq h>0} h^{-\delta}\bigg (\int_s^{t-h} |Z^s_{u+h}-Z^s_{u}|^p \dd u \bigg)^{\frac{1}{p}}\lesssim \Delta_1 + \Delta_2 + \Delta_3 
  \end{equation*}
  where we set 
  \begin{align*}
    & \Delta_1 := \sum_{j=1}^{k-1} \sup_{|t-s|\geq h>0} h^{-\delta}\bigg (\int_s^{t-h} \|\X^1_{s,s+u}\|^{p(k-j)} |\pi_j(\X^1_{s+u,s+u+h}-\X^2_{s+u,s+u+h})|^p  \dd u \bigg)^{\frac{1}{p}},\\
    & \Delta_2 := \sum_{j=1}^{k-1} \sup_{|t-s|\geq h>0} h^{-\delta}\bigg (\int_s^{t-h} |\pi_{k-j} (\X^1_{s,s+u}-\X^2_{s,s+u})|^p \|\X^2_{s+u,s+u+h}\|^{pj} \dd u \bigg)^{\frac{1}{p}},\allowdisplaybreaks\\
    & \Delta_3 := \sup_{|t-s|\geq h>0} h^{-\delta}\bigg (\int_s^{t-h} |\pi_k(\X^1_{s+u,s+u+h}-\X^2_{s+u,s+u+h})|^p\dd u \bigg)^{\frac{1}{p}}.
  \end{align*}
  Due to Proposition~\ref{prop:variation embeddings} and $\delta >1/p$, we have 
  \begin{equation*}
    \|\X^1_{s,s+u}\|^{p(k-j)}\leq \|\X^{1}\|^{p(k-j)}_{1/\delta\var;[s,t]} \lesssim  \|\X^{1}\|^{p(k-j)}_{\hat N^{\delta,p};[s,t]} |t-s|^{(\delta -\frac{1}{p})p(k-j)}.
  \end{equation*}
  Moreover, the induction hypothesis gives 
  \begin{equation*}
    |\pi_j (\X^1_{s+u,s+u+h}-\X^2_{s+u,s+u+h})|^{(1-\frac{1}{j})p} \lesssim \bigg(\sum_{i=1}^j \rho_{\hat N^{\delta,p};[s,t]}^{(i)} (\X^1,\X^2)^{\frac{p}{j}}\bigg )^{j-1} |t-s|^{(j-1)(\delta-\frac{1}{p})p}.
  \end{equation*}
  Therefore, $\Delta_1$ can be estimated by
  \begin{align*}
    \Delta_1 \lesssim &  \sum_{j=1}^{k-1} \|\X^1\|^{k-j}_{\hat N^{\delta,p};[s,t]} |t-s|^{(k-j)(\delta-\frac{1}{p})} \bigg(\sum_{i=1}^j \rho_{\hat N^{\delta,p};[s,t]}^{(i)} (\X^1,\X^2)^{\frac{p}{j}}\bigg )^{\frac{j-1}{p}} |t-s|^{(j-1)(\delta -\frac{1}{p})} \\
    & \times \sup_{|t-s|\geq h>0} h^{-\delta}\bigg (\int_s^{t-h} |\pi_j(\X^1_{s+u,s+u+h}-\X^2_{s+u,s+u+h})|^{\frac{p}{j}} \dd u \bigg)^{\frac{1}{p}}\allowdisplaybreaks \\
    \leq &  \sum_{j=1}^{k-1} \|\X^1\|^{k-j}_{\hat N^{\delta,p};[s,t]} \rho_{\hat N^{\delta,p};[s,t]}^{(j)} (\X^1,\X^2)^{\frac{1}{j}} \bigg(\sum_{i=1}^j \rho_{\hat N^{\delta,p};[s,t]}^{(i)} (\X^1,\X^2)^{\frac{p}{j}}\bigg )^{\frac{j-1}{p}} |t-s|^{(k-1)(\delta-\frac{1}{p})}\\
    \leq & \sum_{j=1}^{k-1} \|\X^1\|^{k-j}_{\hat N^{\delta,p};[s,t]} \bigg(\sum_{i=1}^j \rho_{\hat N^{\delta,p};[s,t]}^{(i)} (\X^1,\X^2)^{\frac{p}{j}}\bigg )^{\frac{j}{p}} |t-s|^{(\delta-\frac{1}{p})(k-1)}.
  \end{align*}
  For $\Delta_2$ we first observe again due to Proposition~\ref{prop:variation embeddings} that
  \begin{align*}
    \|\X^2_{s+u,s+u+h}\|^{pj}\lesssim \|\X^2_{s+u,s+u+h}\|^{p} \|\X^2\|^{p(j-1)}_{\hat N^{\delta,p};[s,t]} |t-s|^{(\delta -\frac{1}{p})(j-1)p}
  \end{align*}
  and by the induction hypothesis that 
  \begin{align*}
    |\pi_{k-j}(\X^1_{s,s+u}-\X^2_{s,s+u})|^p\lesssim \bigg(\sum_{i=1}^{k-j}\rho^{(i)}_{\hat N^{\delta,p};[s,t]}(\X^1,\X^2)^{\frac{p}{k-j}} \bigg)^{k-j}|t-s|^{(\delta-\frac{1}{p})(k-j)p}.
  \end{align*}
  Combining the last two estimates, we get 
  \begin{align*}
    \Delta_2 \lesssim & \sum_{j=1}^{k-1} \|\X^2\|^{j-1}_{\hat N^{\delta,p};[s,t]} |t-s|^{(\delta -\frac{1}{p})(j-1)} \\
    &\times \sup_{|t-s|\geq h>0} h^{-\delta}\bigg (\int_s^{t-h} |\pi_{k-j}(\X^1_{s,s+u}-\X^2_{s,s+u})|^p \|\X^2_{s+u,s+u+h}\|^{p} \dd u \bigg)^{\frac{1}{p}}\\
    \lesssim & \sum_{j=1}^{k-1} \|\X^2\|^{j}_{\hat N^{\delta,p};[s,t]} |t-s|^{(\delta -\frac{1}{p})(k-1)} \bigg(\sum_{i=1}^{k-j}\rho^{(i)}_{\hat N^{\delta,p};[s,t]}(\X^1,\X^2)^{\frac{p}{k-j}} \bigg)^{\frac{k-j}{p}}. \\
  \end{align*}
  For $\Delta_3$ we briefly need to introduce the inhomogeneous H\"older distance for level $k$ by
  \begin{equation*}
    \rho^{(k)}_{(\delta-1/p)\hol;[s,t]}(\X^1,\X^2):=\sup_{u,v\in[s,t];\,u\neq v} \frac{|\pi_k(\X^1_{s+u,s+v}-\X^2_{s+u,s+v})|}{|u-v|^{(\delta -\frac{1}{p})k}}.
  \end{equation*}
  This time we simply estimate  
  \begin{align*}
    \Delta_3 \leq \rho^{(k)}_{\hat N^{\delta,p};[s,t]}(\X^1,\X^2)^{\frac{1}{k}} \rho^{(k)}_{(\delta-1/p)\hol;[s,t]}(\X^1,\X^2)^{1-\frac{1}{k}}|t-s|^{(\delta -\frac{1}{p})(k-1)}.
  \end{align*}
  Applying Proposition~\ref{prop:variation embeddings} to $Z^s_{\cdot}$ we get
  \begin{equation*}
    |\pi_k(\X^1_{s,t}-\X^2_{s,t})|= |Z^s_0-Z^s_{t-s}| \lesssim \sup_{|t-s|\geq h>0} h^{-\delta}\bigg (\int_s^{t-h} |Z^s_{u+h}-Z^s_{u}|^p \dd u \bigg)^{\frac{1}{p}}|t-s|^{\delta -\frac{1}{p}}.
  \end{equation*}
  Putting the estimates for $\Delta_1$, $\Delta_2$ and $\Delta_3$, we deduce further 
  \begin{align*}
    |\pi_k(\X^1_{s,t}-\X^2_{s,t})| \leq &
    \tilde C |t-s|^{\delta -\frac{1}{p}} \bigg [ \sum_{j=1}^{k-1} \bigg(\sum_{i=1}^{j} \rho^{(i)}_{\hat N^{\delta,p};[s,t]}(\X^1,\X^2)^{\frac{p}{j}}\bigg)^{\frac{j}{p}} |t-s|^{(\delta -\frac{1}{p})(k-1)}\\
    &+\rho^{(k)}_{\hat N^{\delta,p};[s,t]}(\X^1,\X^2)^{\frac{1}{k}}\rho^{(k)}_{(\delta -1/p)\hol;[s,t]}(\X^1,\X^2)^{1-\frac{1}{k}}|t-s|^{(\delta -\frac{1}{p})(k-1)}\bigg ]\allowdisplaybreaks\\
    \lesssim & \tilde C |t-s|^{(\delta -\frac{1}{p})k}  \bigg [\sum_{j=1}^{k} \rho^{(j)}_{\hat N^{\delta,p};[s,t]}(\X^1,\X^2) \\
    &+\bigg(\sum_{j=1}^{k} \rho^{(j)}_{\hat N^{\delta,p};[s,t]}(\X^1,\X^2)^{\frac{p}{k}}\bigg )^{\frac{1}{p}}  \rho^{(k)}_{(\delta -1/p)\hol;[s,t]}(\X^1,\X^2)^{1-\frac{1}{k}}\bigg ]\\
    \lesssim & \tilde C |t-s|^{(\delta -\frac{1}{p})k} \bigg[ \bigg(\sum_{j=1}^{k} \rho^{(j)}_{\hat N^{\delta,p};[s,t]}(\X^1,\X^2)^{\frac{p}{k}}\bigg )^{\frac{k}{p}}  \\
    &+\bigg(\sum_{j=1}^{k} \rho^{(j)}_{\hat N^{\delta,p};[s,t]}(\X^1,\X^2)^{\frac{p}{k}}\bigg )^{\frac{1}{p}}  \rho^{(k)}_{(\delta -1/p)\hol;[s,t]}(\X^1,\X^2)^{1-\frac{1}{k}}\bigg ],
  \end{align*}
  for some constant $\tilde C = \tilde C(\delta,p,\|\X^1\|_{\hat N^{\delta,p}},\|\X^2\|_{\hat N^{\delta,p}})\geq 1$, which can be rewritten as
  \begin{align*}
    \frac{|\pi_k(\X^1_{s,t}-\X^2_{s,t})|}{|t-s|^{(\delta -\frac{1}{p})k}}\lesssim &\tilde C \bigg[ \bigg(\sum_{j=1}^{k} \rho^{(j)}_{\hat N^{\delta,p};[s,t]}(\X^1,\X^2)^{\frac{p}{k}}\bigg )^{\frac{k}{p}} \\
    &+\bigg(\sum_{j=1}^{k} \rho^{(j)}_{\hat N^{\delta,p};[s,t]}(\X^1,\X^2)^{\frac{p}{k}}\bigg )^{\frac{1}{p}} \rho^{(k)}_{(\delta -1/p)\hol;[s,t]}(\X^1,\X^2)^{1-\frac{1}{k}}\bigg ].
  \end{align*}
  In other words, we showed that 
  \begin{align*}
    \frac{\rho^{(k)}_{(\delta -1/p)\hol;[s,t]}(\X^1,\X^2)}{\tilde \omega^{(k)}(s,t)} \lesssim &\tilde C \bigg [1 +\bigg( \frac{\rho^{(k)}_{(\delta -1/p)\hol;[s,t]}(\X^1,\X^2)}{\tilde \omega^{(k)}(s,t)}\bigg)^{1-\frac{1}{k}}\bigg ],
  \end{align*}
  with 
  \begin{equation*}
    \tilde \omega^{(k)}(s,t):= \bigg(\sum_{j=1}^{k} \rho^{(j)}_{\hat N^{\delta,p};[s,t]}(\X^1,\X^2)^{\frac{p}{k}}\bigg )^{\frac{k}{p}}.
  \end{equation*}
  Hence, there exists a constant $C =  C(\delta,p,\|\X^1\|_{\hat N^{\delta,p}},\|\X^2\|_{\hat N^{\delta,p}})\geq 1$ such that 
  \begin{equation*}
    \frac{\rho^{(k)}_{(\delta -1/p)\hol;[s,t]}(\X^1,\X^2)}{\tilde \omega^{(k)}(s,t)} \leq C.
  \end{equation*}
  In particular, we have 
  \begin{equation*}
    |\pi_k(\X^1_{s,t}-\X^2_{s,t})| \leq C \bigg(\sum_{j=1}^{k} \rho^{(j)}_{\hat N^{\delta,p};[s,t]}(\X^1,\X^2)^{\frac{p}{k}}\bigg )^{\frac{k}{p}} |t-s|^{(\delta -\frac{1}{p})k},
  \end{equation*}
  which implies \eqref{eq:control each level} for level $k$ and the proof is complete.
\end{proof}

Combining the local equivalence of the inhomogeneous distances $\rho_{\tilde V^{\delta, p}}$ and $\rho_{\hat N{\delta, p}}$ (Lemma~\ref{lem:nikolskii distance 1} and~\ref{lem:nikolskii distance 2}) with the local Lipschitz continuity of the It\^o-Lyons map~\eqref{eq:ito lyons map} with respect to $\rho_{\tilde V^{\delta, p}}$ (Theorem~\ref{thm:ito map riesz variation}), we deduce same continuity result with respect to $\rho_{\hat N^{\delta, p}}$:

\begin{corollary}\label{cor:ito map nikolskii type norm}
  Let $\delta \in (0,1)$ and $\gamma, p \in (1,\infty)$ be such that $\delta > 1/p$ and $\gamma >1/\delta$. 
  
  The It\^o-Lyons map  
  \begin{equation*}
    \Phi \colon  \mathbb{R}^m \times \Lip^{\gamma} \times \hat N^{\delta,p}([0,T];G^{\lfloor 1/\delta \rfloor}(\mathbb{R}^n)) \to \hat N^{\delta,p}([0,T];\mathbb{R}^m) \quad \text{via} \quad \Phi(y_0,V,\X):=Y,
  \end{equation*}
  where $Y$ denotes the solution to controlled differential equation~\eqref{eq:rough differential equation} given the input $(y_0,V,\X)$, is locally Lipschitz continuous with respect to inhomogeneous Nikolskii type distance $\rho_{\hat N^{\delta,p}}$.
\end{corollary}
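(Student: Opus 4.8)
The plan is to obtain this as an essentially immediate consequence of Theorem~\ref{thm:ito map riesz variation}, by transporting that result through the identification of the underlying function spaces (Theorem~\ref{thm:riesz characterization}) and the local equivalence of the two inhomogeneous distances (Lemmas~\ref{lem:nikolskii distance 1} and~\ref{lem:nikolskii distance 2}). First I would check well-definedness of $\Phi$: since $(G^{\lfloor 1/\delta\rfloor}(\mathbb{R}^n),d_{cc})$ and $(\mathbb{R}^m,|\cdot|)$ are metric spaces, Theorem~\ref{thm:riesz characterization} (valid under $\delta>1/p$) gives $\hat N^{\delta,p}([0,T];G^{\lfloor 1/\delta\rfloor}(\mathbb{R}^n))=\tilde V^{\delta,p}([0,T];G^{\lfloor 1/\delta\rfloor}(\mathbb{R}^n))=\Omega^{\delta,p}$ and $\hat N^{\delta,p}([0,T];\mathbb{R}^m)=\tilde V^{\delta,p}([0,T];\mathbb{R}^m)$, with equivalent semi-norms; hence for $\X\in\hat N^{\delta,p}([0,T];G^{\lfloor 1/\delta\rfloor}(\mathbb{R}^n))$ Theorem~\ref{thm:ito map riesz variation} already provides a unique solution $Y$ to~\eqref{eq:rough differential equation}, and $Y\in\tilde V^{\delta,p}([0,T];\mathbb{R}^m)=\hat N^{\delta,p}([0,T];\mathbb{R}^m)$, so $\Phi$ indeed maps into the asserted target space.

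For the local Lipschitz estimate, fix $\X^i\in\hat N^{\delta,p}$ and $V^i\in\Lip^\gamma$ with $\|\X^i\|_{\hat N^{\delta,p}}\le b$ and $\|V^i\|_{\Lip^\gamma}\le l$. By Theorem~\ref{thm:riesz characterization} there is $\tilde b=\tilde b(b,\delta,p)$ with $\|\X^i\|_{\tilde V^{\delta,p}}\le\tilde b$, so Theorem~\ref{thm:ito map riesz variation} applies and yields a constant $C_1=C_1(\tilde b,l,\gamma,\delta,p)$ with
\[
  \|Y^1-Y^2\|_{\tilde V^{\delta,p}}\le C_1\big(\|V^1-V^2\|_{\Lip^{\gamma-1}}+|y_0^1-y_0^2|+\rho_{\tilde V^{\delta,p}}(\X^1,\X^2)\big).
\]
Next I would invoke Lemma~\ref{lem:nikolskii distance 2} — whose constant depends only on $\delta$, $p$ and an upper bound for $\|\X^i\|_{\hat N^{\delta,p}}$, hence on $b$ — to replace $\rho_{\tilde V^{\delta,p}}(\X^1,\X^2)$ on the right by $C_2(\delta,p,b)\,\rho_{\hat N^{\delta,p}}(\X^1,\X^2)$, and apply Theorem~\ref{thm:riesz characterization} once more to bound $\|Y^1-Y^2\|_{\hat N^{\delta,p}}\lesssim\|Y^1-Y^2\|_{\tilde V^{\delta,p}}$ on the left. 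Collecting all constants into one $C=C(b,l,\gamma,\delta,p)\ge 1$ then gives
\[
  \|Y^1-Y^2\|_{\hat N^{\delta,p}}\le C\big(\|V^1-V^2\|_{\Lip^{\gamma-1}}+|y_0^1-y_0^2|+\rho_{\hat N^{\delta,p}}(\X^1,\X^2)\big),
\]
which is the desired local Lipschitz continuity with respect to $\rho_{\hat N^{\delta,p}}$.

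I do not expect a genuine obstacle here, since every step is a direct invocation of a previously established result; Lemma~\ref{lem:nikolskii distance 1} is not even used for the estimate and only records that $\rho_{\hat N^{\delta,p}}$ and $\rho_{\tilde V^{\delta,p}}$ are genuinely (locally Lipschitz) equivalent, rather than one merely dominating the other. The single point deserving care is the $\X$-dependence of the constant in Lemma~\ref{lem:nikolskii distance 2}: the substitution $\rho_{\tilde V^{\delta,p}}\leftrightarrow\rho_{\hat N^{\delta,p}}$ is only legitimate with a \emph{uniform} constant after restricting to a bounded ball in $\hat N^{\delta,p}$, which is exactly the regime in which a local Lipschitz statement is formulated, so no real difficulty arises.
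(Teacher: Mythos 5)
Your proposal is correct and follows essentially the same route as the paper, which deduces the corollary by combining the local Lipschitz estimate of Theorem~\ref{thm:ito map riesz variation} with the equivalence of distances (Lemma~\ref{lem:nikolskii distance 2}, plus Theorem~\ref{thm:riesz characterization} for the norms on input and output). Your remark that only the direction $\rho_{\tilde V^{\delta,p}}\le C\rho_{\hat N^{\delta,p}}$ is needed for the estimate, with the constant uniform on balls in $\hat N^{\delta,p}$, matches the intended argument.
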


\bibliography{quellenRDE}{}
\bibliographystyle{amsalpha}

\end{document}